\documentclass[reqno,english]{amsart}

\usepackage{amsmath,amsfonts,amssymb,graphicx,amsthm,enumerate,url}
\usepackage[noadjust]{cite}
\usepackage{stmaryrd}
\usepackage{comment,paralist,mathrsfs}
\usepackage{mathrsfs,booktabs,tabularx}
\usepackage{xifthen,xcolor,tikz,setspace}
\usetikzlibrary{decorations.pathmorphing,patterns,shapes,calc,decorations}
\usetikzlibrary{decorations.pathreplacing}
\usepackage{mathtools}
\usepackage[small]{caption}
\usepackage{subcaption}

\usepackage[colorinlistoftodos]{todonotes}
\usepackage[colorlinks=true]{hyperref}

\numberwithin{equation}{section}

\renewcommand{\epsilon}{\varepsilon}

\usepackage{color}

\newtheorem{theorem}{Theorem}[section]
\newtheorem*{theorem*}{Theorem}
\newtheorem{lemma}[theorem]{Lemma}

\newtheorem{claim}[theorem]{Claim}

\newtheorem*{observation*}{Observation}

\newtheorem{corollary}[theorem]{Corollary}

\newtheorem{condition}[theorem]{Condition}

\theoremstyle{definition}{
\newtheorem{example}[theorem]{Example}
\newtheorem{definition}[theorem]{Definition}
\newtheorem*{definition*}{Definition}

}

\newcommand{\E}{\mathbb E}
\newcommand{\N}{\mathbb N}
\renewcommand{\P}{\mathbb P}

\newcommand{\bbT}{\mathbb T}

\newcommand{\R}{\mathbb R}
\newcommand{\C}{\mathbb C}

\newcommand{\cD}{\mathcal D}
\newcommand{\cG}{\mathcal G}

\newcommand{\cH}{\mathcal H}

\newcommand{\ii}{\mathrm{i}}

\newcommand{\p}{{\bf p}}
\newcommand{\q}{{\bf q}}
\newcommand{\tr}{{\mathrm {tr}}}
\newcommand{\Id}{\mathrm{Id}}
\newcommand{\Bin}{\mathrm{Bin}}

\DeclareMathOperator{\dist}{dist}

\begin{document}

\title{On the $d$-rigidity phase transition in random graphs}

\author{Yuval Peled}
\address{Einstein Institute of Mathematics, The Hebrew University of Jerusalem, Jerusalem, Israel}
\email{yuval.peled@mail.huji.ac.il}
\thanks{Y.P. was partially supported by the Israel Science Foundation grant ISF-3464/24.}
\maketitle

\begin{abstract}
We study generic \(d\)-dimensional rigidity in sparse random graphs.
Our main result is that for every $d\ge 2$, the Erd\H{o}s--R\'enyi random graph
\(G\sim G(n,c/n)\) undergoes a \(d\)-rigidity phase transition at the known, explicit, \(d\)-orientability threshold \(c_d\):

If \(c<c_d\), then $G$ is asymptotically almost surely (a.a.s.) independent in the
generic \(d\)-rigidity matroid. Moreover, in this regime \(G\) has no
linear-size rigidity components: it contains no induced \(d\)-rigid
subgraphs with more than \(3\) vertices, and the largest clique in its
\(d\)-rigidity closure has size at most \(o(\sqrt n)\). 

If \(c>c_d\),
then $G$ is a.a.s. not independent in the generic \(d\)-rigidity matroid, and we give a sharp asymptotic estimate for its rank. In addition, the \(d\)-rigidity closure of $G$ has a giant clique of linear
size, which 
contains all but at most \(o(n)\) vertices of the \(((d+1)+d)\)-core of the graph.

More generally, we compute, up to a \(1+o(1)\) factor, the generic
\(d\)-rigidity rank of random graphs with a given degree distribution. For
example, we show that the uniform \(n\)-vertex \(k\)-regular graph a.a.s. has rank $\min(k/2,d)n+o(n).$
Our approach  is to estimate the rigidity rank of a random graph
from its Galton--Watson local weak limit, using a parameter that we call
{\em local flexibility}.

\end{abstract}

\section{Introduction}\label{sec:intro}
The systematic study of random graphs was initiated by Erd\H{o}s and
R\'enyi in their seminal papers
\cite{ErdosRenyi1959,ErdosRenyi1960}, and has played an immense role in
modern discrete mathematics, computer science, and related areas
\cite{BollobasRandomGraphs,JansonLuczakRucinskiRandomGraphs,FriezeBook}.
This work studies random graphs from the point of view of generic
rigidity in \(\R^d\), a major subject in modern discrete geometry since the works of Laman \cite{Laman1970} and Asimow--Roth
\cite{AsimowRoth1978,AsimowRoth1979}. While \(1\)- and \(2\)-dimensional
generic rigidity are well understood, characterizing generically rigid
graphs in dimensions \(d\ge 3\) remains a major open problem. Our work focuses on this regime of higher-dimensional rigidity of sparse random graphs.

A \(d\)-dimensional framework is a pair \((G,\p)\), where
\(G=(V,E)\) is a finite simple graph and \(\p:V\to\R^d\). The framework is called rigid if the only continuous motions
of the vertices in \(\R^d\) starting at \(\p\) and preserving the edge lengths are
induced by isometries of \(\R^d\). Asimow and Roth showed that if \(\p\) is generic then the rigidity of the framework \((G,\p)\) depends only
on the underlying graph \(G\), and is equivalent to the rigidity matrix \(R(G,\p)\) attaining a maximal rank, see \cite{GraverServatiusServatius1993}.

One of the classical results on the \(G(n,p)\) random graph model, in which every edge is included
independently with probability \(p\), is that a.a.s., i.e., with probability
tending to one as \(n\to\infty\), the graph is connected if and only if it
has no isolated vertices. Since graph connectivity is equivalent to
\(1\)-dimensional rigidity, it is natural to ask for higher-dimensional
analogs: a.a.s., \(d\)-rigidity occurs if and only if the
minimum degree of the graph is at least \(d\). Indeed, this was first proved for $d=2$ by Jackson, Servatius and Servatius \cite{JSS-planeThreshold}, and extended to all fixed dimensions
\(d\ge 3\) by Lew, Nevo, Raz and the author \cite{LewNevoPeledRazSharpThreshold} (see also \cite{KiralyTheranCoherence,JordanTanigawaRigidityRandomSubgraphs}). Rigidity of \(G(n,p)\) has also been studied in other
settings where the minimum degree is not the only obstruction to its rigidity,
such as when \(d=d(n)\to\infty\)~\cite{KrivelevichLewMichaeliRigidPartitions,KrivelevichCombinatorialSufficientConditions,PeledPelegHighDimensional,JordanLiuVillanyiDegreeSum}, and for non-generic embeddings~\cite{BenjaminiTzalikDetermining,GiraoIllingworthMichelPowierskiScottReconstructing,MontgomeryNenadovPortierSzaboGlobalRigidityR,ClinchHaslegraveHuynhNixonNAC}.

Another fundamental discovery in random graph theory is the phase transition in \(G(n,p)\) at \(p=1/n\): 
for \(c<1\), the random graph \(G(n,c/n)\) a.a.s. consists of connected components of size \(O(\log n)\) and only \(O(1)\) cycles,
while for \(c>1\), it a.a.s. contains a unique giant component of linear size and its cycle space has linear dimension.
This was extended to \(2\)-dimensional rigidity by 
Kasiviswanathan, Moore and Theran~\cite{KasiviswanathanMooreTheranRigidityTransition} (see also~\cite{TheranRigidComponentsRandomGraphs}). This paper establishes an analogous phase transition for \(d\)-dimensional rigidity of $G(n,p)$ for every fixed \(d\ge 3\).

\medskip

The infinitesimal point of view of Asimow and Roth leads to several interesting notions in
rigidity that are needed to state our results. 
The \(|E|\times d|V|\) rigidity matrix \(R(G,\p)\) is the differential at \(\p\) of the map $\R^{d|V|}\to\R^{|E|}$ given by
\(
\q\mapsto \left(\frac12\|\q(u)-\q(v)\|_2^2\right)_{uv\in E}.
\)
We assume that $|V|=n>d$ and $\p$ is generic. Then, the isometries of $\R^d$ induce a $\binom{d+1}{2}$-dimensional subspace of the kernel of $R(G,\p)$.
\(G\) is called {\em \(d\)-rigid} if 
the rank $\mathrm{rk}_d(G)$ of $R(G,\p)$ is equal to
$dn-\binom{d+1}{2}$, and 
{\em $d$-independent} if $\mathrm{rk}_d(G)=|E(G)|$.

Let \(\tilde G\) denote the \((d+1)\)-core of \(G\), i.e., the maximal
subgraph of minimum degree at least \(d+1\). Denote by
$n_{d+1}=|V(\tilde G)|$ and $m_{d+1}=|E(\tilde G)|$.
Clearly,
\[
    \mathrm{rk}_d(\tilde G)
    \le \min\left(m_{d+1},\,dn_{d+1}\right).
\]
In addition, removing a vertex \(v\) of degree \(d_v\le d\) reduces
the rank by exactly \(d_v\). Therefore, by denoting $m=|E(G)|$, we have
\begin{equation}\label{eq:core-rank-upper-bound}
    \mathrm{rk}_d(G) = m-m_{d+1}+\mathrm{rk}_d(\tilde G)
    \le
    m+\min\left(0,\,dn_{d+1}-m_{d+1}\right).
\end{equation}
A main theme of this work is that \eqref{eq:core-rank-upper-bound} is asymptotically sharp in various models of sparse random graphs. The \((d+1)\)-core of \(G(n,c/n)\) has been studied
extensively, and its asymptotic size and density are well understood
\cite{PittelSpencerWormaldCore,JansonSimplekcore}. In particular,
let \(\hat p=\hat p(c)\) denote the largest solution in
\([0,1]\) of $p=\P(\operatorname{Poi}(cp)\ge d).$
There exists a critical value $\gamma_d$ such that $\hat p>0$ if and only if $c\ge \gamma_d$, and the $(d+1)$-core of $G(n,c/n)$ is known to emerge at $c=\gamma_d$. 
Moreover, a.a.s.,
\[
    n_{d+1}
    =
    \left(\P(\operatorname{Poi}(c\hat p)\ge d+1)+o(1)\right)n,
    \qquad
    m_{d+1}
    =
    \left(\frac{c\hat p^2}{2}+o(1)\right)n .
\]

The threshold \(c_d\) at which the \((d+1)\)-core
first has edge density \(d\) was shown to be
critical for \(d\)-orientability of \(G(n,p)\), the property
that the edges can be oriented so that every vertex has in-degree at most
\(d\)~\cite{CainSandersWormald,FernholzRamachandran}.

\begin{theorem}\label{thm:mainGnp_rank}
    Fix $d\ge 2$, and let $G\sim G(n,c/n)$, where $c>0$ is fixed. Then a.a.s. the following hold:
    \begin{enumerate}
        \item If \(c<c_d\) then \(G\) is \(d\)-independent.
        \item If \(c>c_d\) then \(G\) is not \(d\)-independent and
        \[
        \mathrm{rk}_d(G) = (1+o(1))\left(\frac c2 + d\,\P\big(\operatorname{Poi}(c\hat p)\ge d+1\big)- \frac{c\hat p^2}{2}\right)n\,.
        \]
    \end{enumerate}
    \end{theorem}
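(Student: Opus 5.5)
The upper bound is already in place: by \eqref{eq:core-rank-upper-bound} and the known asymptotics of $n_{d+1},m_{d+1}$, we have a.a.s.
\[
\mathrm{rk}_d(G)\le m+\min(0,dn_{d+1}-m_{d+1}) = \left(\tfrac c2+\min\big(0,\,d\P(\operatorname{Poi}(c\hat p)\ge d+1)-\tfrac{c\hat p^2}{2}\big)+o(1)\right)n .
\]
For $c>c_d$ the core has edge density above $d$, so $m_{d+1}>dn_{d+1}$ by a linear amount and $G$ is not $d$-independent. The rank estimate in part (2) will therefore follow from the matching lower bound $\mathrm{rk}_d(\tilde G)\ge (1-o(1))\min(m_{d+1},dn_{d+1})$. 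For part (1) we need the exact statement $\mathrm{rk}_d(\tilde G)=m_{d+1}$, which is stronger.

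For the lower bound I would follow the abstract's strategy and bound the rank below through the local weak limit. I would introduce a ``local flexibility'' functional $\phi$ on rooted graphs. The idea is to write
\[
dn-\mathrm{rk}_d(G)=\dim\ker R(G,\p)
\]
as a sum over vertices $v$ of local contributions, namely the dimension of the flexibility at $v$ that is not forced by the rest of the graph. One natural choice is an ordered greedy decomposition in which $\dim\ker$ restricted to coordinates at $v$ is taken modulo a ball of radius $r$. The aim is to show that $\frac1n\dim\ker R$ is asymptotically upper bounded by $\E[\phi_r(T)]$, where $T$ is the Galton--Watson $\operatorname{Poi}(c)$ tree. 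I would compute $\phi$ on the tree by a recursive message-passing argument of the kind used for $k$-core and orientability. A vertex of the tree is \emph{pinned} (has no relative flexibility) when at least $d$ of its children subtrees are pinned. This gives the fixed-point probability $\hat p$ from $p=\P(\operatorname{Poi}(cp)\ge d)$. The expected flexibility then evaluates to
\[
dn-\mathrm{rk}_d \approx n\left(d-\tfrac c2-d\P(\operatorname{Poi}(c\hat p)\ge d+1)+\tfrac{c\hat p^2}{2}\right)
\]
when $c>c_d$, and to $dn-m$ when $c<c_d$. The proof that this computation transfers to the finite graph uses the fact that $G(n,c/n)$ converges locally to $T$, together with concentration.

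The main obstacle is the transfer step: showing that generic rigidity rank is controlled, up to $o(n)$, by local information. Here I would use an orientation-type certificate in the spirit of the $d$-orientability results of \cite{CainSandersWormald,FernholzRamachandran}. For $c>c_d$, delete $o(n)$ edges so that the remaining core subgraph has an orientation with in-degree exactly $d$ at almost all vertices. Then show that such a graph, being sparse with an expander-like core, has rank equal to its edge count. To do this, build it by a sequence of $0$- and $1$-extension (Henneberg) moves, or by a generic-position argument in which each vertex's $\le d$ incoming edges are realized independently. I expect this independence claim for sparse random ``$d$-orientable'' graphs to be the heart of the proof.

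For part (1) an $o(n)$ error is not allowed, so we need exact independence. The plan is to show that a.a.s. every subgraph $H$ of $\tilde G$ has $|E(H)|\le d|V(H)|-\binom{d+1}{2}$ in the relevant range: small subgraphs are sparse by first-moment counts, and large ones by strict $d$-orientability below $c_d$. We then certify independence inductively. At any stage the leftover graph has a vertex of degree $\le d$, or, after peeling, has a removable structure; otherwise we reduce via vertex splitting or $1$-extensions to graphs that are locally tree-like. Using short cycles being rare ($O(1)$ in number), a local-limit argument yields rank $m-o(n)$. A separate rigidity-closure argument, that any dependence would force a small dense subgraph, then upgrades this to exact independence. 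This last upgrade is where I am least confident.
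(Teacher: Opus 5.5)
Your upper bound and the non-independence claim in (2) are fine. The lower bound on the rank, which is the actual content of the theorem, has a genuine gap, and you have put the difficulty in the wrong step.

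Once the local contribution is chosen correctly, the transfer from the local limit is short. The paper takes $\phi(v)=\tr P_{v,v}$, the diagonal block of the orthogonal projection onto $\ker\bar L$ of the rigidity Laplacian $L=R^TR$. This is exactly additive ($\sum_v\phi(v)=\tr P=\dim Z$). It also equals $\sum_j\mu_{e_{v,j}}(\{0\})$, an atom of a spectral measure. Local convergence gives strong resolvent convergence, hence weak convergence of spectral measures, and Portmanteau yields $\limsup\E[\phi_{G_n}(o_n)]\le\E[\phi_T(o)]$ (Lemma~\ref{lem:upper_semi}). Your ``greedy decomposition modulo a ball'' is not specified well enough to have both properties.

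Your replacement for the transfer cannot work as stated:
\begin{itemize}
\item An orientation with in-degrees at most $d$ gives only Hakimi's count $|E(H)|\le d|V(H)|$, which is weaker than Maxwell's count; for example, $K_{2d+1}$ is $d$-orientable but $d$-dependent.
\item Maxwell counts themselves do not certify independence for $d\ge 3$.
\item Showing near-independence of a random $(d+1)$-core with about $dn_{d+1}$ edges by Henneberg moves is essentially the theorem itself; compare the open questions in Section~\ref{sec:discussion}.
\end{itemize}

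The real work is on the infinite tree, where you only compute the pinning probability. What is needed is $\E[\tr Q_o]$. This requires the full matrix recursion of Theorem~\ref{thm:recursion}, obtained via resolvent identities for the possibly unbounded self-adjoint Laplacian with $\ell_2$ motions; this setting is necessary because finite trees are never pinned. It also requires an exchangeability argument that makes the cross term equal $\tfrac12$.

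Moreover, the pinning probability is only \emph{some} fixed point of $p=\P(\operatorname{Poi}(cp)\ge d)$, not necessarily $\hat p$. The paper proceeds as follows:
\begin{itemize}
\item it obtains $\E[\phi_T(o)]=\phi(p)$ for an unidentified fixed point $p$;
\item it bounds this by $\max_{p\in[0,1]}\phi(p)$;
\item it shows via unimodality of $\phi''$ that for Poisson this maximum is attained at $0$ or $\hat p$;
\item it identifies $\phi(0)$ and $\phi(\hat p)$ with $(dn-m)/n$ and $(dn-m-dn_{d+1}+m_{d+1})/n$ up to $o(1)$.
\end{itemize}

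For part (1), your upgrade points the wrong way. Dependencies in $G(n,c/n)$ are not forced into small dense subgraphs. Every circuit of the $d$-rigidity matroid has minimum degree at least $d+1$, so by the gap theorem for cores it has at least $\alpha n$ edges. Linear-size circuits are exactly what first-moment or local arguments cannot rule out.

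The missing idea is to use the slack below $c_d$:
\begin{itemize}
\item Pick $c<c'<c_d$ and apply the $o(n)$-error rank formula to $G'\sim G(n,c'/n)$. This gives a stress space (left kernel of $R$) of dimension $o(n)$.
\item Obtain $G$ by deleting $\Omega(n)$ random edges of $G'$.
\item While the stress space is nonzero, there is a circuit with at least $\alpha n$ edges. Each deletion hits it with probability bounded below, and then lowers the dimension by one.
\item By a Chernoff bound, all dependencies disappear a.a.s.
\end{itemize}
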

    In other words, the upper bound given by \eqref{eq:core-rank-upper-bound} is attained asymptotically.

\begin{figure}[h!]
    \centering
    \includegraphics[width=0.85\textwidth]{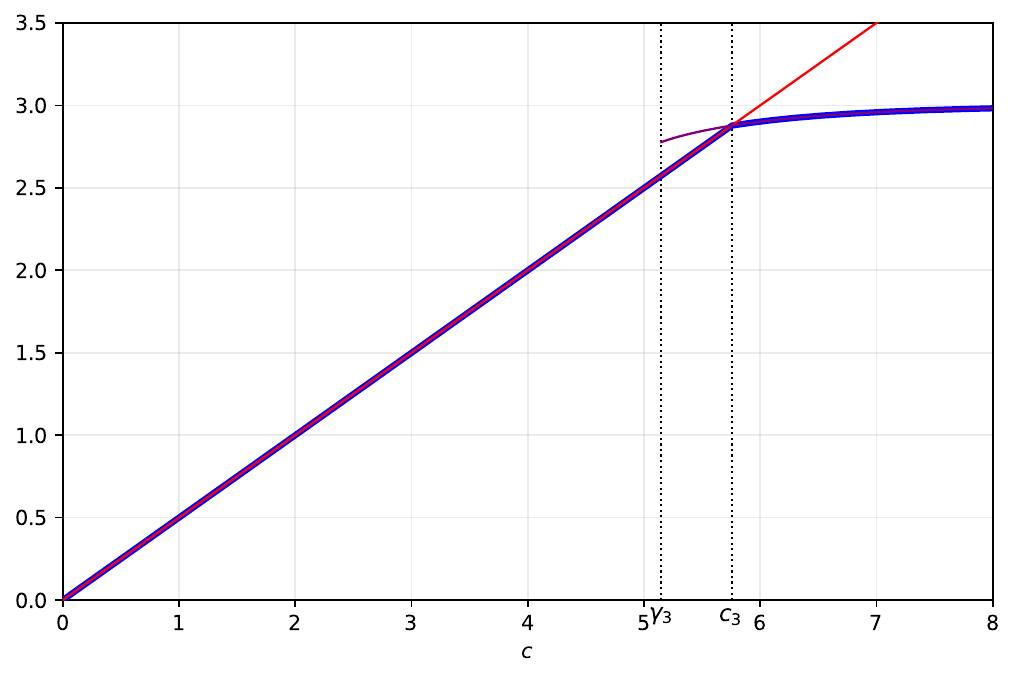}
    \caption{
    Illustration of Theorem~\ref{thm:mainGnp_rank} for \(d=3\).
    The blue curve is the asymptotic value of \(\mathrm{rk}_3(G)/n\).
    The red curve is the trivial edge-density bound \(c/2\), and the purple curve is the second bound in \eqref{eq:core-rank-upper-bound} with the parameters of the $4$-core.
    The vertical lines mark the thresholds
    $\gamma_3 \approx 5.1494,$ and
$c_3 \approx 5.7549.$
    }
    \label{fig:d3-rank-plot}
\end{figure}
For $c<\gamma_d$, the first item of Theorem \ref{thm:mainGnp_rank} follows directly from the fact that the $(d+1)$-core of $G$ is a.a.s. empty. The main contribution of this item is that $G$ is $d$-independent if $\gamma_d\le c<c_d$ --- when the $(d+1)$-core exists but its edge density is smaller than $d$. For illustration,
\[
\begin{array}{|c|c|c|c|c|c|}
\hline
d & 2 & 3 & 4 & 5 & d\to\infty \\ \hline
\gamma_d
& 3.3509 & 5.1494 & 6.7993 & 8.3653
& d+\sqrt{d\log d}+O(\sqrt d) \\ \hline
c_d
& 3.5880 & 5.7549 & 7.8430 & 9.8955
& 2d-\dfrac{4+o(1)}{\sqrt{2\pi d}}\left(\dfrac{2}{e}\right)^d \\\hline
\end{array}
\]

Related phenomena have been studied in physics and chemistry under the names rigidity percolation and topological constraint theory, particularly in network glasses and other disordered materials, where the percolation of rigid clusters is associated with the onset of macroscopic mechanical rigidity \cite{Phillips1979,Thorpe1983,Thorpe1985,JacobsThorpe1995}.

In recent years, fascinating connections between maximum likelihood thresholds (MLT) and rigidity
theory have been discovered and developed \cite{Uhler2012,GrossSullivant2018,BernsteinDewarGortlerNixonSitharamTheranMLT}.
The minimum dimension $d$ in which a graph is $d$-independent plays an important role
in this area. Theorem~\ref{thm:mainGnp_rank} identifies this dimension for
\(G(n,c/n)\), making progress toward a conjecture from
\cite{BernsteinDewarGortlerNixonSitharamTheranMLT} concerning its MLT (see Section \ref{sec:discussion}).

\medskip

The linear matroid on \(E(K_n)\) given by the rigidity matrix is
called the generic \(d\)-rigidity matroid. We denote by
\(\operatorname{cl}_d(G)\) the closure of \(G\) in this matroid, namely the graph whose edges consist of all \(uv\in E(K_n)\) whose addition to \(G\) does not increase
the \(d\)-rigidity rank. The next theorem shows that the phase transition
at \(p=c_d/n\) is also reflected in the emergence of a giant clique in the
\(d\)-rigidity closure, which spans almost all the $((d+1)+d)$-core of the graph. This is the subgraph obtained from the
\((d+1)\)-core by repeatedly adding vertices that have at least \(d\)
neighbors in the subgraph already constructed.

\begin{theorem}\label{thm:mainGnp_closure}
    Fix \(d\ge 2\), and let \(G\sim G(n,c/n)\), where \(c>0\) is fixed.
    Then a.a.s. the following hold:
    \begin{enumerate}
        \item If \(c<c_d\), then
        \begin{enumerate}
            \item \(G\) contains no induced \(d\)-rigid subgraph with more than \(3\) vertices.
            \item \(|\operatorname{cl}_d(G)\setminus E(G)|=o(n)\), and the largest clique in \(\operatorname{cl}_d(G)\) has size \(o(\sqrt n)\).
        \end{enumerate}
        \item If \(c>c_d\), then the largest clique in \(\operatorname{cl}_d(G)\) has size \((\hat p+o(1))n\), and it contains
         all but at most \(o(n)\) vertices of the \(((d+1)+d)\)-core of $G$.
    \end{enumerate}
\end{theorem}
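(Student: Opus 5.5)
Theorem~\ref{thm:mainGnp_closure} will be derived from Theorem~\ref{thm:mainGnp_rank} together with standard structural facts about \(G(n,c/n)\) and basic matroid arguments.

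\textbf{Subcritical regime.} For (1a), suppose \(H\) is an induced \(d\)-rigid subgraph on \(k\ge 4\) vertices. Then \(|E(H)|\ge \mathrm{rk}_d(H)=dk-\binom{d+1}{2}\) if \(k\ge d+1\), or \(|E(H)|\ge\binom{k}{2}\) if \(k\le d+1\). Since \(G\) is \(d\)-independent by Theorem~\ref{thm:mainGnp_rank}(1), \(H\) is independent, so equality holds. We then split by the size of \(k\).
\begin{itemize}
\item For small \(k\) (up to \(\epsilon n\)), a first-moment count shows that a.a.s.\ no set of \(k\ge 4\) vertices spans at least \(\min(\binom k2, dk-\binom{d+1}{2})\) edges. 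The expected number is at most \(\binom nk\binom{\binom k2}{m}(c/n)^m\) with \(m/k\ge 3/2>1\) (for \(k\ge4\), \(d\ge2\)), and this sum is \(o(1)\) once \(m\ge k+1\). The case \(k=4\) needs \(m=6\), and \(k\le d+1\) needs \(\binom k2\ge k+2\), so all these cases are fine.
\item For linear \(k\), a rigid \(H\) has minimum degree at least \(d\) after pruning. More precisely, rigidity forces \(H\) to have no vertex of degree \(<d\), since otherwise that vertex is flexible. Its \((d+1)\)-core is then dense, with edge density approaching \(d\), contradicting the known density \(<d\) of every subgraph of the \((d+1)\)-core when \(c<c_d\). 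I would take this last density bound from the orientability literature, namely maximal subgraph density being below \(d\).
\end{itemize}

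For (1b), I use the closure structure. In an independent matroid, a non-edge \(uv\) lies in \(\operatorname{cl}_d(G)\) iff \(G+uv\) contains a circuit through \(uv\). That circuit \(C\) satisfies \(|E(C)|=\mathrm{rk}_d(C)+1\), so \(C-uv\) spans a subgraph with \(dk-\binom{d+1}2\) edges, and is in fact rigid on its vertex set. By (1a) and the counting above, such rigid subgraphs of \(G\) with \(k\ge4\) vertices do not exist except possibly with \(k\) large. I would bound the number of closure edges by the number of rigid components, and their sizes via the local weak limit and local flexibility: the Galton--Watson tree has no rigid substructure, so only \(o(n)\) vertices lie in nontrivial rigid components. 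A clique of size \(s\) in \(\operatorname{cl}_d(G)\) contributes \(\binom s2-O(s)\) extra edges, so \(|\operatorname{cl}_d\setminus E|=o(n)\) gives \(s=o(\sqrt n)\).

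\textbf{Supercritical regime.} For (2), let \(K\) be the \(((d+1)+d)\)-core, of size \((\hat p+o(1))n\) by standard core results. I would show that \(\operatorname{cl}_d(K)\) is complete on all but \(o(n)\) vertices. By Theorem~\ref{thm:mainGnp_rank}(2), \(\mathrm{rk}_d(\tilde G)=(d+o(1))n_{d+1}\), so \(\tilde G\) has rank \(d|V(\tilde G)|-o(n)\). In any graph, the rigid components \(R_1,\dots,R_t\) satisfy
\[
\mathrm{rk}\le \sum_i \Big(d|R_i|-\tbinom{d+1}{2}\Big)
\]
(for components of size \(\ge d+1\)), and vertices covered by several components add an excess. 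Full rank up to \(o(n)\) therefore forces one component to contain \(n_{d+1}-o(n)\) vertices. I expect this to be the main obstacle: the bound \(\sum_i(d|R_i|-\binom{d+1}2)\) loses only \(\binom{d+1}{2}\) per component and overcounts shared vertices, so rank deficiency \(o(n)\) alone does not exclude many medium components. I would first try a rank-additivity argument across a partition, or use local flexibility from the paper's main machinery to bound the flexibility of \(\tilde G\) near almost every vertex. Once we have a rigid set \(R\) with \(|R|\ge n_{d+1}-o(n)\), adding vertices with \(d\) neighbours in the current rigid set preserves rigidity (\(d\)-valent vertex addition). Thus all but \(o(n)\) vertices of \(K\) lie in one clique of \(\operatorname{cl}_d(G)\). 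For the matching upper bound, vertices outside the \(((d+1)+d)\)-core can be peeled with fewer than \(d\) edges into it, so they are not in the closure clique, which gives size \((\hat p+o(1))n\).
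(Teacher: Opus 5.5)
Your plan has genuine gaps in (1b) and (2). The gap in (1a) is repairable.

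\textbf{Subcritical part.} In (1b) you claim that for a circuit $C\ni uv$ of $G+uv$, the graph $C-uv$ is rigid with $dk-\binom{d+1}{2}$ edges. This fails for $d\ge 3$: circuits of the $d$-rigidity matroid need not be rigid (the double banana is a flexible circuit in $\R^3$). This is exactly why, as the paper stresses, closure cliques need not induce rigid subgraphs when $d\ge 3$. Your fallback, that the Galton--Watson limit ``has no rigid substructure, so only $o(n)$ vertices lie in nontrivial rigid components'', proves too much: the limit is also a tree for $c>c_d$, where a giant closure clique exists.

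The missing idea is sprinkling on top of Theorem~\ref{thm:mainGnp_rank}(1). Take $c<c'<c_d$ and add each non-edge of $G$ independently with probability $(c'-c)/(n-c)$. If $|\operatorname{cl}_d(G)\setminus E(G)|>\varepsilon n$ with probability bounded away from zero, then with probability bounded away from zero some closure non-edge is added. Then $G(n,c'/n)$ is not $d$-independent, a contradiction. Your $o(\sqrt n)$ clique bound then follows as you describe.

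In (1a), the small-$k$ first moment works if you use $m\ge(1+\delta)k$. The condition $m\ge k+1$ alone does not give $o(1)$ for all $k$ up to $\varepsilon n$, and for $d=2$, $k=4$ one has $m=5$, not $6$. For linear $k$, however, $d$-orientability only gives $|E(H)|\le d|V(H)|$. This is consistent with a rigid $H$ having $d|V(H)|-\binom{d+1}{2}$ edges, so there is no contradiction yet. Again sprinkle to $c'<c_d$: then $V(H)$ gains $\Theta(n)$ internal edges and violates orientability (or independence) at $c'$. Repaired this way, your (1a) does not depend on (1b) and also covers $d=2$. The paper instead derives (1a) for $d\ge 3$ from (1b) together with the gap theorem for subgraphs of minimum degree $3$.

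\textbf{Supercritical part.} The step you flag is not merely hard. The implication ``rank $dn_{d+1}-o(n)$ forces a rigid component on $n_{d+1}-o(n)$ vertices'' is false deterministically. For $d=2$, a large generic kagome patch is independent with $2n-o(n)$ edges, yet its only rigid subgraphs are edges and triangles. For $d\ge 3$, a rigid subgraph of $G$ on almost all of the core is strictly more than the theorem asserts, and is essentially the open problem discussed after it. You also use $|K|=(\hat p+o(1))n$, which the paper notes is proved only for $d=2$, and your ``peeling'' upper bound is heuristic.

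The paper never looks for a rigid subgraph of $G$. Instead it proceeds in four steps:
\begin{enumerate}
\item[(i)] It shows $|\operatorname{cl}_d(G)|\ge(\hat p^2-o(1))\binom n2$ by sprinkling from some $c'\in(c_d,c)$ to $c$. Every sprinkled edge outside the current closure raises the rank, while the limiting normalized rank $\rho(c)$ from Corollary~\ref{cor:rd_ER} satisfies $\rho'(c)=(1-\hat p^2)/2$.
\item[(ii)] It proves the matching upper bound via Claim~\ref{clm:deterministic_closure_r}, a pinned-boundary use of the recursion of Theorem~\ref{thm:recursion}. This shows that all but $o(n^2)$ closure pairs lie inside $\widehat V_r$, the set of vertices whose $r$-ball is a tree containing a complete $d$-ary tree of height $r$. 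Moreover $|\widehat V_r|=(q_r+o(1))n$ with $q_r\to\hat p$.
\item[(iii)] It applies the Krivelevich--Lew--Michaeli minimum-degree criterion to the nearly complete \emph{closure graph} on $\widehat V_r$, not to $G$. So some $A$ with $|A|=(\hat p-o(1))n$ induces a rigid subgraph of $\operatorname{cl}_d(G)$, and is therefore a clique of it.
\item[(iv)] It checks that $V(K)\subseteq\widehat V_r\cup C_r$, where $C_r$ is the $o(n)$-size set of vertices whose $r$-ball is not a tree.
\end{enumerate}
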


\begin{figure}[h!]
    \centering
    \includegraphics[width=0.75\textwidth]{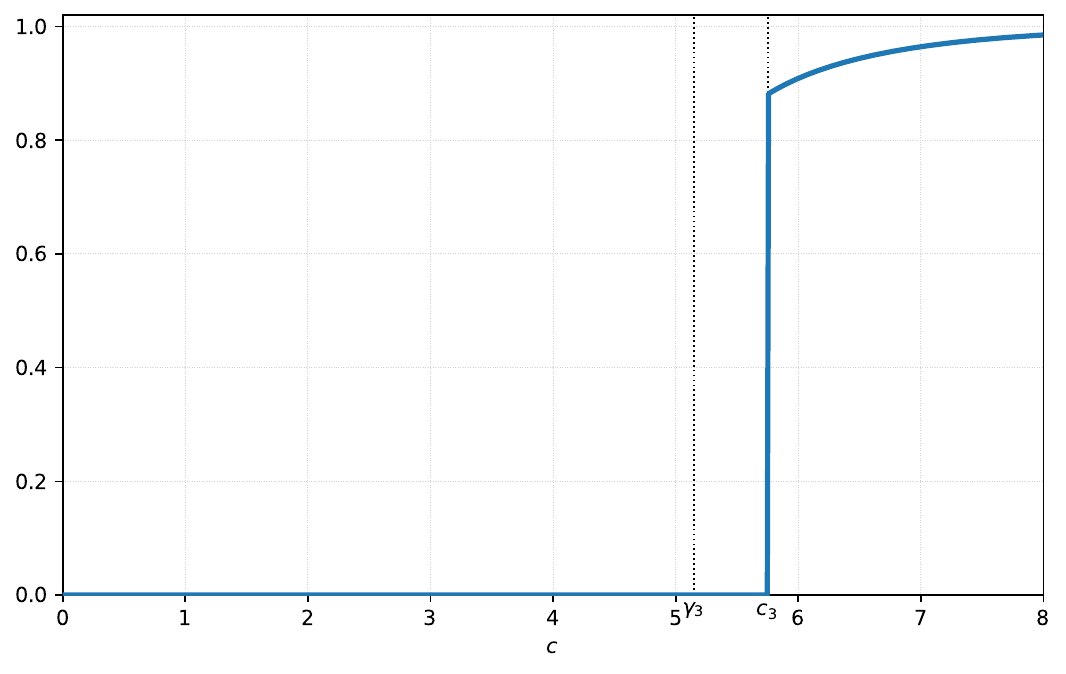}
    \caption{
    Illustration of Theorem~\ref{thm:mainGnp_closure} for \(d=3\).
    The blue curve is the asymptotic density of the largest clique in the $3$-rigidity closure of $G$.
    }
    \label{fig:d3-rank-plot}
\end{figure}

Note that a local heuristic suggests that the \(((d+1)+d)\)-core of
\(G(n,c/n)\) a.a.s. contains \((\hat p+o(1))n\) vertices. While this was
proved only for \(d=2\) in \cite{BarreLelargeMitscheSliders}, it seems
likely that the same arguments work for all $d$.

Item~\((1a)\) was conjectured in \cite{LewNevoPeledRazSharpThreshold}, as
was a stronger version of item~\((2)\): that the \emph{entire}
\(((d+1)+d)\)-core \emph{induces} a \(d\)-rigid graph.
In addition, Theorems~\ref{thm:mainGnp_rank} and
\ref{thm:mainGnp_closure} were proved for \(d=2\) in
\cite{KasiviswanathanMooreTheranRigidityTransition}. In this case it is also known that a.a.s.
\(\operatorname{cl}_2(G)=G\) when \(c<c_2\), and we conjecture that the
same holds for every \(d\ge 2\). In fact, if $d=2$ then the
subgraph induced by any inclusion-maximal clique in the rigidity closure is
itself \(2\)-rigid. This fails for \(d\ge 3\), and therefore
Theorem~\ref{thm:mainGnp_closure} does not imply the existence of a giant
\(d\)-rigid induced subgraph when \(c>c_d\). However, for sufficiently large
\(c>0\), such a giant \(d\)-rigid induced subgraph was shown to exist a.a.s.
by Krivelevich, Lew and Michaeli~\cite{KrivelevichCombinatorialSufficientConditions}.

\medskip

Another well-studied model is the uniform \(n\)-vertex \(k\)-regular graph
\(G_{n,k}\). It is well-known that for a fixed $k\ge 3$, $G_{n,k}$ is a.a.s. of essentially high-girth as $n\to\infty$. That is, for any fixed $g$, only $o(n)$ vertices are contained in a cycle of length at most $g$.
Clearly, a \(k\)-regular graph cannot be \(d\)-rigid when
\(k<2d\), since it has too few edges. It is conjectured that for every
\(d\ge 2\) and \(k\ge 2d\), the random graph \(G_{n,k}\) is a.a.s.
\(d\)-rigid \cite{KrivelevichLewMichaeliRigidPartitions}. This was proved for \(d=2\) in \cite{JSS-planeThreshold}, and for $k\ge 501d$, for every $d\ge 3$, in \cite{KrivelevichCombinatorialSufficientConditions}.
For \(d\ge 3\) and small $k$, the question remains open, and we prove an
asymptotic version of this conjecture. In fact, our theorem applies to any sequence of \(k\)-regular graphs of essentially high-girth.

\begin{theorem}\label{thm:main_high_girth_regular}
    Fix \(d\ge 2\) and \(k\ge 3\). Let \(G\) be an $n$-vertex 
    \(k\)-regular graph of essentially high-girth. Then, as $n\to\infty$,
    \[
        \mathrm{rk}_d(G)
        =
        \left(\min\left\{\frac{k}{2},d\right\}+o(1)\right)n .
    \]
\end{theorem}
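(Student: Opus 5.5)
The upper bound is immediate: $\mathrm{rk}_d(G)\le \min(|E(G)|, dn)=\min(k/2,d)\,n$. The work is in the matching lower bound $\mathrm{rk}_d(G)\ge (\min(k/2,d)-o(1))n$. My plan is to cast the lower bound as a local, essentially tree-like statement, in line with the ``local flexibility'' approach announced in the abstract. Since $G$ has essentially high girth, its local weak limit is the infinite $k$-regular tree $\bbT_k$. The rank deficit $dn-\mathrm{rk}_d(G)$ equals, up to the $\binom{d+1}{2}$ trivial motions, the dimension of the space of infinitesimal flexes, i.e.\ $\ker R(G,\p)$. Dually, $|E|-\mathrm{rk}_d(G)$ is the dimension of the space of self-stresses. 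So it suffices to show one of two things: if $k\le 2d$, the stress space has dimension $o(n)$; if $k\ge 2d$, the flex space has dimension $o(n)$.

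First I would control the kernel dimension by a normalized local quantity. Choose $\p$ generic and write $P$ for the orthogonal projection onto $\ker R(G,\p)\subseteq \R^{dn}$. Then
\[
\dim\ker R(G,\p)=\tr P=\sum_{v}\tr P_{vv},
\]
where $P_{vv}$ is the $d\times d$ diagonal block at $v$. Equivalently, by a L\"uck-type approximation, $\dim\ker R/n$ is the spectral mass at $0$ of $R^TR$, normalized per vertex. The Laplacian-like operator $R^TR$ is local: its entries depend only on the edges and positions near $v$. Spectral measures of such operators converge under local weak convergence, and the mass at $0$ is upper semicontinuous. Hence
\[
\limsup_n \frac{\dim\ker R(G,\p)}{n}\le \E\big[\tr P^{\bbT}_{oo}\big],
\]
where $P^{\bbT}$ is the projection onto the $\ell^2$-kernel of the rigidity operator on $\bbT_k$ with i.i.d.\ generic (say Gaussian) positions. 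The same reasoning applies to $RR^T$, i.e.\ to the stresses, on the edge side. I must take care that the positions are not generic in the algebraic sense. I would handle this by noting that the rank is lower-semicontinuous, so random continuous positions are generic almost surely, and the random positions give a unimodular decorated limit.

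The core step, which I expect to be the main obstacle, is showing that on the tree the $\ell^2$ kernel has the correct trace. Concretely, the $\ell^2$ self-stresses on $\bbT_k$ must vanish when $k\le 2d$, and the $\ell^2$ flexes must vanish when $k\ge 2d$. Since the mass at $0$ only gives an upper bound, it is enough to prove triviality of the $\ell^2$ kernels of $R$ and $R^T$ on the tree in the respective regimes.

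For stresses with $k\le 2d$, I would use a leaf-stripping/unimodularity argument. At each vertex there are $d$ equilibrium equations in $k$ unknowns, and the edge directions are generic, so fixing the stresses on edges toward the root determines a $(k-1-d)^+$-dimensional family on the outgoing edges. A mass-transport principle then compares the expected number of degrees of freedom per vertex, which is $k/2-d\le 0$, with the $\ell^2$ requirement, and this forces zero. For flexes with $k\ge 2d$, I would argue symmetrically: an $\ell^2$ flex gives a unimodular invariant, and the mass-transport principle yields $\E\tr P_{oo}\le (d-k/2)^+=0$. The boundary case $k=2d$ needs both arguments to be sharp. If the direct $\ell^2$ argument on the tree is difficult, I would instead prove the finite version. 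I would truncate to depth-$r$ balls, use generic positions to show that the rank of the ball's rigidity matrix relative to its boundary is $\min$ of the counts, and patch these local estimates together into a global bound with error $O(n/r)$.

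Finally, combining the two directions gives $\mathrm{rk}_d(G)\ge dn-o(n)$ when $k\ge 2d$ and $\mathrm{rk}_d(G)\ge kn/2-o(n)$ when $k\le 2d$. The $o(n)$ vertices lying on short cycles change the kernel dimension by only $o(n)$, by rank subadditivity under deleting $o(n)$ edges.
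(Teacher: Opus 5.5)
Your outer reduction matches the paper's. You write $\dim\ker R(G,\p)=\sum_v\tr P_{vv}$ and use upper semicontinuity of the mass at $0$ under Benjamini--Schramm convergence (Lemma~\ref{lem:upper_semi}). It then suffices to show $\E[\tr P_{oo}]\le\max(d-k/2,0)$ on $\bbT_k$ with i.i.d.\ admissible positions. The tree statements you reduce to are true: the flex one is Theorem~\ref{thm:Tk}, and the stress one follows from Corollary~\ref{cor:treeX}.

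\textbf{The gap.} This tree statement carries the entire difficulty of the theorem, and your argument for it does not work. Unimodularity and mass transport, applied to the polar decomposition of the rigidity operator, give at most the $\ell_2$ index identity
\[
\E[\tr P_{oo}]-\frac k2\,\E[\psi(e)]=d-\frac k2 ,
\]
where $\psi(e)$ is the diagonal entry, at an edge $e$ incident to the root, of the projection onto the $\ell_2$ self-stresses. This is the infinite analogue of $\dim\ker R-\dim\ker R^{T}=dn-|E|$, i.e.\ of the trivial bound. It gives $\E[\tr P_{oo}]\ge (d-k/2)^+$, which is the opposite of the inequality you need, and it cannot force either kernel to vanish. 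Your count of ``$k/2-d$ degrees of freedom per vertex'' is exactly this identity. The $(k-1-d)^+$-dimensional extension count at each vertex concerns arbitrary, non-$\ell_2$ stresses; that space is infinite-dimensional and says nothing about the $\ell_2$ kernel.

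\textbf{What is missing.} You need an exact evaluation of $\E[\tr P_{oo}]$ on the tree, which the paper obtains in three steps.
\begin{enumerate}
\item Theorem~\ref{thm:recursion} gives a resolvent (Schur-complement) recursion expressing the forward flexibility matrix $Q_o$ through the children's $Q_{v_i}$ and the vectors $\p_{ov_i}$.
\item Lemma~\ref{lem:Qo0} deduces that $Q_o=0$ iff at least $d$ children have $Q_v=0$. Hence $p=\P(Q_v=0)$ solves $p=\P(\Bin(k-1,p)\ge d)$.
\item Theorem~\ref{thm:Exp_loc_flex} uses an exchangeability argument to compute the expected trace of the inverse in the recursion in closed form. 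This yields $\E[\phi(o)]=\phi(p)$ for \emph{some} fixed point $p$.
\end{enumerate}
By Claim~\ref{clm:max_phi}, the maximum of $\phi$ is attained at $p\in\{0,\hat p\}$, which gives $\E[\phi(o)]\le\max(d-k/2,0)$. Theorem~\ref{thm:rd_degreeX} with $X\equiv k$ then finishes the proof.

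\textbf{Why the fallback fails.} The leaves of a depth-$r$ ball in $\bbT_k$ outnumber its interior vertices by a factor of about $k-2$. Since $\bbT_k$ is non-amenable, essentially high-girth $k$-regular sequences are not hyperfinite. So boundary effects are of the order of the volume, and patching local estimates costs $\Theta(n)$, not $O(n/r)$. Concretely, with free leaves a ball has rank about one per vertex. With pinned leaves and $k-1\ge d$, its whole interior is immobilized. Which boundary behaviour propagates is exactly the choice between the fixed points $p=0$ and $p=1$. No local count decides this; the paper sidesteps it by bounding $\E[\phi(o)]$ by the maximum of $\phi$.
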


Note that Theorem~\ref{thm:main_high_girth_regular} gives another example
in which the upper bound \eqref{eq:core-rank-upper-bound} is asymptotically
tight, since the \((d+1)\)-core of a \(k\)-regular graph is the graph itself
whenever \(k\ge d+1\). In fact, Theorem~\ref{thm:rd_degreeX} below shows that,
under suitable technical assumptions on a distribution \(X\) on
\(\mathbb N_{\ge 0}\), a uniform random graph whose degree distribution is
sampled from \(X\) a.a.s. has \(d\)-rigidity rank asymptotically attaining
the upper bound in \eqref{eq:core-rank-upper-bound}. Thus
Theorem~\ref{thm:mainGnp_rank} corresponds closely to the special case
\(X=\operatorname{Poi}(c)\), while Theorem~\ref{thm:main_high_girth_regular}
 to the case where \(X\equiv k\) is the point mass at \(k\).

\medskip

Our proof adapts ideas from the work of Bordenave, Lelarge and Salez on the
rank of sparse random graphs \cite{BordenaveLelargeSalezDiluted}, and from
the work of Linial and the author on the homology of random simplicial
complexes \cite{LinialPeledHomology}. The common theme is that the sparse
random graphs under consideration converge locally, in the sense of
Benjamini--Schramm \cite{BenjaminiSchramm2001}, to a limiting rooted tree.
For example, \(G(n,c/n)\) converges locally to a Galton--Watson tree with
offspring distribution \(\operatorname{Poi}(c)\).

We define a parameter \(\phi_{G,\p}(v)\in[0,d]\), called the local
flexibility of a vertex \(v\) in a framework \((G,\p)\), which quantifies
the freedom of motion of \(v\) in the framework. The sum of
\(\phi_{G,\p}(v)\) over all vertices is equal to the dimension of the
kernel of \(R=R(G,\p)\), and since this parameter can be read from the spectral
measure of \(R^*R\), it can be approximated by the local flexibility of the
root in the limiting tree.

To apply this approach, we extend basic notions in infinitesimal rigidity theory to infinite frameworks via the theory of self-adjoint unbounded operators. We then compute the expected local flexibility of Galton--Watson trees using their recursive structure, yielding the main theorems.

\medskip

The purely infinite aspects of our work may also be of independent interest. For
example, we prove an infinite analogue of the conjecture on the rigidity of
random \(k\)-regular graphs. Say that an infinite framework \((G,\p)\) is
strictly \(\ell_2\) \(d\)-rigid if it admits no nonzero infinitesimal motions
in \(\ell_2(V;\R^d)\). That is, there is no nonzero
\(q\in\ell_2(V;\R^d)\) such that for every edge \(uv\in E(G)\),
\[
    \langle q(u)-q(v),\,\p(u)-\p(v)\rangle=0.
\]
This definition is not vacuous: in contrast with finite frameworks, the trivial motions need not lie in $\ell_2$. See Section~\ref{sec:inf_fram} for the precise definition, its motivation, and further discussion.

\begin{theorem}\label{thm:Tk}
    Let \(d\ge 2\) and \(k\ge 2\). Let \(\bbT_k\) be the infinite
    \(k\)-regular tree, and let \(\nu\) be the uniform distribution on
    \([0,1]^d\). Sample \(\p:V(\bbT_k)\to\R^d\) by drawing each
    \(\p(v)\) independently from \(\nu\). Then,
    \[
    \P\big((\bbT_k,\p)\text{ is strictly \(\ell_2\) \(d\)-rigid}\big)
    =
    \begin{cases}
        1, & k\ge 2d,\\
        0, & k<2d.
    \end{cases}
\]
\end{theorem}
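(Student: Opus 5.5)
The plan has two halves. The case $k<2d$ is a direct construction of an $\ell_2$ motion. The case $k\ge 2d$ combines a local-flexibility computation on the tree with an argument that the kernel is trivial.

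\emph{The case $k<2d$.} I would build a nonzero $q\in\ell_2(V;\R^d)$ with $\langle q(u)-q(v),\p(u)-\p(v)\rangle=0$ on every edge. Fix a root $o$, set $q(o)=0$ except for one neighbour, and propagate outward. At a vertex $v$ with parent $u$ and value $q(v)$ already fixed, each child $w$ imposes one linear constraint on $q(w)$:
\[
\langle q(w),\p(w)-\p(v)\rangle=\langle q(v),\p(w)-\p(v)\rangle .
\]
This leaves a $(d-1)$-dimensional affine family of choices, and one can take the minimal-norm solution.

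That choice alone need not decay, because the norms multiply by roughly $(k-1)\cdot\E|\cos|^2$ per generation. So instead I would run the construction in reverse, inwards towards a finite set. Pick a large ball $B_r$ and count degrees of freedom: the constraints number $|E(B_r)|\approx \frac k2|B_r|$ (boundary effects are of the same order on a tree, so this needs care), while the unknowns number $d|B_r|$. To turn the count into an $\ell_2$ motion, it is cleaner to use the paper's local flexibility machinery. By unimodularity of $\bbT_k$ with i.i.d.\ labels, $\E\phi(o)$ equals the limit of the normalized kernel dimension of finite high-girth $k$-regular frameworks. By Theorem~\ref{thm:main_high_girth_regular} this limit is $d-\min(k/2,d)=d-k/2>0$ when $k<2d$. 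Since $\phi(o)$ is the spectral measure mass at $0$ of $R^*R$ at $o$, a positive expectation forces a nonzero kernel with positive probability. The event that a nonzero kernel exists is invariant under the automorphisms of $\bbT_k$, which act ergodically on the i.i.d.\ field, so by a 0-1 law it has probability one.

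\emph{The case $k\ge 2d$.} The same identity gives $\E\phi(o)=d-\min(k/2,d)=0$. Since $\phi\ge 0$, we get $\phi(v)=0$ almost surely at every vertex. Then the projection $P$ onto $\ker R$ has $\langle Pe_{v,i},e_{v,i}\rangle=0$ for all $v,i$. Hence $P=0$, the kernel in $\ell_2$ is trivial, and the framework is strictly $\ell_2$ $d$-rigid almost surely.

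\emph{Main obstacle.} The hard step is justifying that $\E\phi(o)$ on $(\bbT_k,\p)$ with uniform $\p$ equals the finite limit. This requires:
\begin{itemize}
\item local weak convergence of the labelled graphs;
\item continuity of the spectral mass at $0$, which in general only gives an upper semicontinuity inequality.
\end{itemize}
Since Theorem~\ref{thm:main_high_girth_regular} is presumably proved exactly through this machinery, I would instead compute $\E\phi(o)$ directly via the recursive fixed-point equations for the tree, used in the paper's Galton--Watson analysis. I would then verify that for the regular tree the relevant fixed point is unique and yields $\max(d-k/2,0)$. The uniqueness of that fixed point is where I expect the real work to lie.
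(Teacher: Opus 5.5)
Your overall plan matches the paper's. You compute $\E[\phi_{\bbT_k,\nu}(o)]$, deduce $P=0$ from vanishing diagonal blocks when this is $0$, and use a 0-1 law when it is positive. Your 0-1 law is a valid substitute for the paper's tail-event argument via Lemma~\ref{lem:Qo0}: the event $\ker\bar L\neq 0$ is invariant under a hyperbolic automorphism of $\bbT_k$, which acts mixingly on the i.i.d.\ field.

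The gap is the key identity $\E[\phi_{\bbT_k,\nu}(o)]=\max(d-k/2,0)$. Lemma~\ref{lem:upper_semi}, applied to high-girth $k$-regular graphs, gives only one direction:
\[
\E[\phi_{\bbT_k,\nu}(o)]\ \ge\ \limsup_n \tfrac1n\E\dim Z(G_n,\p_n)\ \ge\ d-\tfrac k2,
\]
where the last step is edge counting alone. That direction is exactly what the case $k<2d$ needs, so that case is fine once you replace ``equals'' by ``$\ge$''. For $k\ge 2d$, however, you need $\E[\phi_{\bbT_k,\nu}(o)]\le 0$. The finite side cannot supply this: the atom at $0$ is only upper semicontinuous, so finite kernels of size $o(n)$ (Theorem~\ref{thm:main_high_girth_regular}) give no upper bound on the tree.

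Your proposed remedy is to solve the tree recursion and prove the fixed point is unique. This rests on a false premise. With $Y\equiv k-1$, both $p=0$ and $p=1$ solve $p=\P(\Bin(k-1,p)\ge d)$ whenever $k\ge d+1$. For $k\ge d+2$, in particular for every $k\ge 2d$, there is also an interior solution. The recursion is consistent with $Q\equiv 0$, so it cannot identify $p_Y=\P(Q_u=0)$. Theorem~\ref{thm:Exp_loc_flex} says only that $\E[\phi(o)]=\phi(p)$ for \emph{some} solution $p$, which gives the upper bound $\E[\phi(o)]\le\max_{[0,1]}\phi$.

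What is missing is a bound on $\phi$ over all of $[0,1]$, or at least at every fixed point. This is Condition~\ref{ass:max_phi}. For $X\equiv k$ it follows from Claim~\ref{clm:max_phi} and the unimodality of $\phi''(p)=k\bigl(d\binom{k-1}{d}p^{d-1}(1-p)^{k-1-d}-1\bigr)$, and it gives $\max_{[0,1]}\phi=\max(\phi(0),\phi(1))=\max(d-k/2,0)$. Combining this upper bound with the finite-graph lower bound is Corollary~\ref{cor:treeX}, on which the paper's proof rests.

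Note also that the recursion alone could never settle the range $d+1\le k<2d$, because there $\phi(1)=0<\phi(0)$. So dropping the finite-graph lower bound in favour of the recursion, as your last paragraph suggests, would lose that case.
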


\medskip

The remainder of the paper is organized as follows. In
Section~\ref{sec:preliminaries} we give the necessary background material.
In Section~\ref{sec:inf_fram} we define and study rigidity for infinite
frameworks. In Section~\ref{sec:local_flex} we introduce local flexibility
and prove its upper semi-continuity under Benjamini--Schramm convergence.
The main technical work is carried out in Section~\ref{sec:trees}: in
Subsection~\ref{sec:recursive} we establish a recursive formula for the
local flexibility of trees, and in Subsection~\ref{sec:GW} we solve this
recursion for Galton--Watson trees. In Section~\ref{sec:applications} we
apply this solution to prove the main theorems of the paper. We conclude
with open problems in Section~\ref{sec:discussion}.

\section{Preliminaries}\label{sec:preliminaries}
\subsection{Infinitesimal rigidity theory}
Throughout this paper, we fix a dimension $d\ge 2$. Let $G$ be an $n$-vertex graph and $\p:V(G)\to\R^d$ a generic embedding. We view every $\p(v)$ as a column vector in $\R^d$, 
and denote $\p_{uv}=\p(u)-\p(v)$ for two distinct vertices $u$ and $v$. 
The rigidity matrix $R = R(G,\p)$ of the $d$-framework $(G,\p)$ is the $|E| \times d|V|$ matrix whose rows are indexed by edges $uv \in E$ and whose columns are grouped into $|V|$ blocks of size $d$, one block for each vertex. 
For an edge $uv \in E$, the row corresponding to $uv$ has $\p_{uv}^*$ in the block corresponding to $u$, $\p_{vu}^*$ in the block corresponding to $v$, and zero elsewhere.

Throughout this paper it will be more convenient to work with the {\em rigidity Laplacian} $L=L(G,\p)=R^TR$ (defined more explicitly below), as it is a self-adjoint operator.
We denote by $Z=Z(G,\p)$ the kernel of $R(G,\p)$; equivalently, $Z$ is the kernel of the rigidity Laplacian $L(G,\p)$. 
A \(dn\)-dimensional vector \(q\) in \(Z\) can be viewed as assigning an
infinitesimal velocity vector \(q(v)\in\R^d\) to each vertex, describing
an infinitesimal motion of the vertices at \(\p\) under which all edge
lengths are preserved to first order. If \(\p\) is generic then the
isometries of \(\R^d\) induce a \(\binom{d+1}{2}\)-dimensional subspace of
\(Z\), which we refer to as the {\em trivial motions}. More concretely, the
subspace of trivial motions is parametrized by a \(d\times d\)
skew-symmetric matrix \(A\) and a vector \(t\in\R^d\), yielding a vector
\(q\in Z\) defined by
\[
q(v)=A\p(v)+t,\qquad v\in V.
\]
$G$ is $d$-rigid iff $Z$ consists only of trivial motions. 

Another matrix that will play a role in this work is the orthogonal
projection matrix \(P:\R^{dn}\to Z\). Intuitively, suppose that each vertex
has unit mass and that the edges are massless bars. If \(f\in \R^{dn}\) describes a vector of instantaneous external forces,
where \(f(v)\in\R^d\) is the force applied to the vertex \(v\), then
\(Pf\) is the component of this force in the space of admissible
infinitesimal motions; equivalently, for unit masses, it is the resulting
instantaneous acceleration after imposing the bar constraints.
One may consider this locally. Let \(P_{v,v}\) denote the
\(d\times d\) diagonal block of \(P\) corresponding to the vertex \(v\).
If a force \(f(v)\) is applied only to \(v\), then the resulting
instantaneous acceleration of \(v\) is
$P_{v,v} f(v).$ Thus, the trace of $P_{v,v}$ quantifies the degrees of freedom that $v$ has in the framework, and we will refer to it as the local flexibility of $v$; see Section \ref{sec:local_flex} below for a formal definition and further discussion.

\subsection{Benjamini--Schramm convergence}
A main idea in this work is to estimate the rigidity rank of a finite graph by its local structure. 
For this, we work with the local weak topology of rooted graphs introduced in the influential paper 
of Benjamini and Schramm \cite{BenjaminiSchramm2001}. 
This point of view is closely related to the objective method of Aldous and Steele
\cite{AldousSteeleObjectiveMethod}, and has been used to study a variety of
parameters of sparse graphs which are governed by local limits. Of particular
relevance to us are results concerning spectral measures and kernels of
operators on locally convergent graphs; see, for example,
\cite{AbertThomViragSpectralMeasure,BordenaveLelarge2010,BordenaveLelargeSalezDiluted}.

A rooted graph $(G,o)$ is a graph with a root vertex $o\in V(G)$. Let $\cG_\bullet$ denote the set of isomorphism classes of connected locally finite rooted graphs $(G,o)$, 
where isomorphisms are required to preserve the root. 
For $(G,o)\in\cG_\bullet$ and an integer $r\ge 0$, we write $B_G(o,r)$ for the rooted induced subgraph spanned by all vertices 
at graph distance at most $r$ from $o$; thus $B_G(o,r)\in\cG_\bullet$ as well. We metrize $\cG_\bullet$ by defining 
the distance between $(G_1,o_1)$ and $(G_2,o_2)$ to be $2^{-r}$, where $r\ge 0$ is the largest radius for which the 
rooted balls $B_{G_1}(o_1,r)$ and $B_{G_2}(o_2,r)$ are root-preserving isomorphic. The induced metric is the 
{\em local topology} on $\cG_\bullet$; with this topology, $\cG_\bullet$ is a Polish space, so the standard notions of tightness and convergence in distribution apply.
We say that a (possibly random) rooted graph $(G,o)$ is the local weak (Benjamini--Schramm) limit of a sequence of (possibly random) finite graphs $G_n$ if $(G_n,o_n)$ 
converges in distribution to $(G,o)$ in $\cG_\bullet$, where $o_n$ is chosen uniformly from $V(G_n)$.

We mention two classical examples of local weak convergence from graph theory that appear in this paper. First, $G(n,c/n)$, where $c>0$ is fixed, converges locally to a 
Galton--Watson branching process with $\operatorname{Poi}(c)$ progeny distribution (defined precisely in the next subsection) as $n\to\infty$. 
Second, the uniform random $k$-regular $n$-vertex graph, where $k\ge 3$ is fixed, converges locally to the infinite $k$-regular tree $\bbT_k$ as $n\to\infty$. 
Both these examples can be viewed as special cases of local weak convergence of uniform random graphs with a given degree distribution 
to a corresponding unimodular Galton--Watson process.

\subsection{Galton--Watson processes, size-biasing and thinning}
The graphs we study here converge locally to Galton--Watson processes. 
A Galton--Watson branching process is a random rooted tree generated recursively by letting each vertex produce an independent number of children.  
Let $X,Y$ be probability distributions on $\N_{\ge 0}$. Denote by $\mathrm{GW}(X,Y)$ the distribution on random, locally finite rooted trees $T$ generated 
by the Galton--Watson branching process in which the root $o$ has offspring distribution $X$, while every other vertex has offspring distribution $Y$. 

Galton--Watson processes that arise as local limits of finite graphs are unimodular, which is equivalent to the condition that $Y$ is the size-biasing of $X$:
\begin{equation}\label{eq:size_bias}
Y(m-1)=\frac{m\,X(m)}{\E[X]}\,,\quad\forall m\ge 1\,.
\end{equation}
Here we assume $\E[X]<\infty$.

Two main examples that we consider in this paper are:
\begin{enumerate}
    \item \(X=Y=\operatorname{Poi}(c)\), for some fixed \(c>0\), which is the
    local weak limit of \(G(n,c/n)\);
    \item the \(k\)-regular tree \(\bbT_k\), i.e.,
    \(X(k)=Y(k-1)=1\), which is the local weak limit of the random
    \(k\)-regular graph.
\end{enumerate}

An easy and fundamental fact about size-biasing is that for every $f:\N_{\ge 1}\to\R$ there holds,
\begin{equation}\label{eq:sizeBiasF}
\E[Xf(X-1)] = \E[X]\E[f(Y)]\,.
\end{equation}

In addition, we will use the operation of thinning, and its relation to size-biasing. Given $0\le p \le 1$, we define a coupled pair $(X_p,X_{1-p})$ by 
first sampling $X$, and then, conditional on $X$, sampling $X_p\sim \Bin(X,p)$ and setting $X_{1-p}:=X-X_p$. We can also view this pair as coupled with $X$.
The following claims are useful in our argument.
\begin{claim}\label{clm:sizeBias_new_claim}
Suppose that $Y$ is the size-biasing of $X$ and $0\le p\le 1$. Then,
\begin{enumerate}
\item $Y_p$ is the size-biasing of $X_p$.
\item For every positive integer $d>0$ there holds $$(1-p)\E[X]\P(Y_p<d)=\E[\mathbf 1_{X_p<d}\cdot X_{1-p}]\,.$$
\item Suppose that $t\sim Y_{1-p}$ and that conditioned on $t$, $h\sim X_p | \{X_{1-p}=t+1\}$. Then, the pair $(h,t)$ is $(Y_p,Y_{1-p})$-distributed.
\end{enumerate}
\end{claim}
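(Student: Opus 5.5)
All three items follow from direct manipulation of probability mass functions, using the size-biasing identity \eqref{eq:size_bias}. Throughout I write $\mu=\E[X]$ and note that $\E[X_p]=p\mu$.

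\textbf{Item (1).} I compute the law of $X_p$:
\[
\P(X_p=j)=\sum_m X(m)\binom{m}{j}p^j(1-p)^{m-j}.
\]
For $j\ge 1$, the identity $j\binom{m}{j}=m\binom{m-1}{j-1}$ gives
\[
\frac{j\,\P(X_p=j)}{p\mu}=\sum_{m\ge 1}\frac{mX(m)}{\mu}\binom{m-1}{j-1}p^{j-1}(1-p)^{m-j}.
\]
Now substitute $Y(m-1)=mX(m)/\mu$. The right-hand side becomes $\sum_{k}Y(k)\binom{k}{j-1}p^{j-1}(1-p)^{k-(j-1)}$, which is exactly $\P(Y_p=j-1)$. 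So $Y_p$ is the size-biasing of $X_p$. The degenerate case $p=0$ is handled separately (or excluded), since then $\E[X_p]=0$.

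\textbf{Item (3).} I prove this before item (2), since (2) will follow from it. I verify the joint law directly. On one side,
\[
\P(Y_p=h,Y_{1-p}=t)=Y(h+t)\binom{h+t}{h}p^h(1-p)^t=\frac{(h+t+1)X(h+t+1)}{\mu}\binom{h+t}{h}p^h(1-p)^t.
\]
On the other side, the constructed pair has law $\P(Y_{1-p}=t)\,\P(X_p=h\mid X_{1-p}=t+1)$. Two ingredients are needed.
\begin{itemize}
\item By item (1) applied with $1-p$ in place of $p$, $\P(Y_{1-p}=t)=(t+1)\P(X_{1-p}=t+1)/((1-p)\mu)$.
\item The conditional probability is $X(h+t+1)\binom{h+t+1}{h}p^h(1-p)^{t+1}/\P(X_{1-p}=t+1)$.
\end{itemize}
Multiplying, the factor $\P(X_{1-p}=t+1)$ cancels. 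The identity $(t+1)\binom{h+t+1}{h}=(h+t+1)\binom{h+t}{h}$ then matches the two sides. The case $p=1$ is trivial.

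\textbf{Item (2).} Apply \eqref{eq:sizeBiasF}-style reasoning to the pair. For $X\ge1$ and conditionally on $X$, pick one of the $X$ items uniformly. The probability that the chosen item lies in the $(1-p)$-part is $X_{1-p}/X$. Hence
\[
\E[\mathbf 1_{X_p<d}X_{1-p}]=\sum_m mX(m)\,\P\big(\text{chosen item in }(1-p)\text{-part},\ X_p<d\big).
\]
Conditioning on the chosen item being in the $(1-p)$-part (probability $1-p$, independent of the rest), $X_p$ is distributed as $\Bin(m-1,p)$ built from the other $m-1$ items. Summing $mX(m)=\mu Y(m-1)$ then yields $(1-p)\mu\,\P(Y_p<d)$.

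Alternatively, item (2) can be read off from item (3). Sum the identity $\P(X_p=h,X_{1-p}=t+1)(t+1)=(1-p)\mu\,\P(Y_p=h,Y_{1-p}=t)$ over $h<d$ and all $t$.

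The only real care needed is the bookkeeping of binomial coefficients and the edge cases $p\in\{0,1\}$. Neither is a genuine obstacle.
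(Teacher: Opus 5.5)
Your proposal is correct and follows essentially the paper's argument: item (1) is the same binomial computation as the paper's linearity-of-expectation step combined with the size-biasing identity, and your uniform-item argument for item (2) is the paper's reduction to $\Bin(X-1,p)$ followed by \eqref{eq:sizeBiasF}. For item (3) you use the same ingredients as the paper (the identity $(t+1)\binom{h+t+1}{h}=(h+t+1)\binom{h+t}{h}$, size-biasing, and item (1) with $1-p$ in place of $p$), except that you compare joint laws where the paper compares the conditional laws $\P(X_p=h\mid X_{1-p}=t+1)$ and $\P(Y_p=h\mid Y_{1-p}=t)$.
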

\begin{proof}
First, for every $t \ge 0$ there holds 
\begin{align}
\label{eq:yp_sizebias_of_xp}
(t+1)\,X_p(t+1)
&= \E\!\left[\mathbf 1_{X_p=t+1}\cdot X_p\right] \notag\\
&= \E\!\left[\E\!\left[\mathbf 1_{X_p=t+1}\cdot X_p \mid X\right]\right] \notag\\
&= \E\!\left[X\cdot p \cdot \P(\Bin(X-1,p)=t )\right] \notag\\
&= p\,\E[X]\cdot \E[\P(\Bin(Y,p)=t)] \notag\\
&= \E[X_p]\cdot Y_p(t)\,, \notag
\end{align}
as claimed. The third equality follows from linearity of expectation, since given $X$, $\mathbf 1_{X_p=t+1}\cdot X_p$ is a sum of $X$ Bernoullis with probability $p\cdot\P(\Bin(X-1,p)=t).$ In addition, the fourth equality is derived by \eqref{eq:sizeBiasF} using $f(k)=\P(\Bin(k,p)=t).$

Similarly, for the second item, by linearity of expectation,
\[
\E[\mathbf 1_{X_p<d}\cdot X_{1-p}\mid X] = X\cdot (1-p)\cdot\P(\Bin(X-1,p)<d)\,,
\]
and by taking expectation over $X$ we obtain
\begin{align*}
\E\!\left[\mathbf 1_{X_p<d}\,X_{1-p}\right]
&=(1-p)\,\E\!\left[X\cdot\P\!\left(\Bin(X-1,p)<d \right)\right] \\
&=(1-p)\,\E[X]\cdot\E[\P\!\left(\Bin(Y,p)<d\right)]
\;=\;(1-p)\,\E[X]\,\P\!\left(Y_p<d\right).
\end{align*}
The transition to the second line is derived by \eqref{eq:sizeBiasF} using $f(k)=\P(\Bin(k,p)<d)$.

For the third item, note that for every $t,h$ it holds that
\begin{align*}
\P(X_p=h\mid X_{1-p}=t+1) &=~ \frac{X(h+t+1)\P(\Bin(h+t+1,p)=h)}{X_{1-p}(t+1)}\\[0.6em]
&=~ \frac{(1-p)(h+t+1)X(h+t+1)\P(\Bin(h+t,p)=h)}{(t+1)X_{1-p}(t+1)} \\[0.6em]
&=~ \frac{(1-p)\E[X]Y(h+t)\P(\Bin(h+t,p)=h)}{(t+1)X_{1-p}(t+1)} \\[0.6em]
&=~ \frac{Y(h+t)\P(\Bin(h+t,p)=h)}{Y_{1-p}(t)} \\
&=~ \P(Y_p=h\mid Y_{1-p}=t) \,,
\end{align*}
and the claim follows since we assume $t\sim Y_{1-p}$. Here, the second equality follows from a standard identity of Binomial distributions, the next equality holds since $Y$ is the size-biasing of $X$, and the fourth equality is derived by the first item.
\end{proof}

\section{Infinite frameworks}\label{sec:inf_fram}
An important idea in this work is to study rigidity of infinite graphs, and
infinite trees in particular. Rigidity of infinite frameworks was studied by
Owen and Power \cite{OwenPowerInfiniteFrameworksOperatorTheory} and by
Kitson and Power \cite{KitsonPowerRigidityInfiniteGraphs}. The approach we
take here is closest to the notion of square-summable
infinitesimal rigidity from \cite{OwenPowerInfiniteFrameworksOperatorTheory}.
However, we opt to give a self-contained treatment, partly since our focus is on the spectral-theoretic aspects of this notion.

The infinite graphs we consider are locally finite but may have unbounded degree. Consequently, their rigidity Laplacians may be unbounded operators. Accordingly, we begin by recalling some background material from the theory of unbounded operators~\cite{ReedSimonI}.
Let $\cH$ be a complex Hilbert space. The main issue with unbounded operators is that they cannot be defined on the entire space $\cH$. Hence, an unbounded operator on $\cH$ consists of a domain $D$ which is a linear subspace of $\cH$, and a linear operator $L:D\to \cH$. As we aim to apply spectral methods, we need to work with a self-adjoint operator. However, if $L$ is not closed, that is, if the graph $\{(f,Lf)\mid f\in D\}$ is not closed in $\cH\oplus\cH$, then $L$ cannot be self-adjoint, since the adjoint $L^*$ is always closed. 
Fortunately, in our setting, $D$ is a {\em dense} subspace and $L$ is symmetric. Therefore, $L$ is closable, and we denote its closure by $\bar L$. The domain $D$ is a {\em core} for $\bar L$.
We say that $L$ is essentially self-adjoint if $\bar L$ is self-adjoint.
In this case, the spectral theorem applies to $\bar L$. In particular, for any bounded Borel function $g:\R\to\C$, the operator $g(\bar L)$ is well defined via the functional calculus. Moreover, for every vector $f\in\cH$, there exists a finite Borel measure $\mu_f$ on $\R$, called the spectral measure of $\bar L$ with respect to $f$, such that
\[
\langle f, g(\bar L) f\rangle = \int_\R g(\lambda)\, d\mu_f(\lambda)
\]
for all bounded Borel functions $g$. In particular, if $g=\mathbf 1_{\{0\}}$ then $g(\bar L)$ is the orthogonal projection $P:\cH\to\ker\bar L$ and $\langle f,Pf\rangle=\mu_f(\{0\})$.

All the graphs $G=(V,E)$ we consider
are locally finite on a countable vertex set. Let $\p:V\to\R^d$. Since we use spectral theory, we prefer to work over the complex numbers and consider the Hilbert space
\[
\cH:=\ell_2(V;\C^d)\,.
\]
This makes no difference to us since rigidity theories over $\R$ and $\C$ are identical. Denote by $D\subset\cH$ the subspace of finitely supported vectors. The rigidity Laplacian $L=L(G,\p)$ is given by a (possibly infinite) block matrix
\[
L=(L_{uv})_{u,v\in V}, \qquad 
L_{uv}\in\C^{d\times d},
\]
acting on $f\in D$ by
\[
(L f)(u)=\sum_{v\sim u}L_{uv} f(v),
\]
where the sum is finite for each $u$. The block entries are given by
\begin{equation}\label{eq:Luv}
L_{uv}=
\begin{cases}
\displaystyle \sum_{w\sim u} \p_{uw}\p_{uw}^{*}, & u=v,\\[1ex]
\displaystyle -\,\p_{uv}\p_{uv}^{*}, & \{u,v\}\in E,\\[1ex]
0, & \text{otherwise}\,.
\end{cases}
\end{equation}

The operator $L$ is symmetric and defined on the dense domain $D$, since $L_{uv}=L_{vu}^*$ for all $u,v\in V$. Therefore, $L$ is closable, and we denote its closure by $\bar L$.

\begin{definition}
Let $G$ be a locally finite graph on a countable vertex set and $\p:V(G)\to\R^d$.
The $d$-framework $(G,\p)$ is called \emph{self-adjoint} if $\bar L(G,\p)$ is a self-adjoint operator.
\end{definition}
It is clear that if $G$ is finite, or, more generally, if $G$ has bounded degree and $\p_{uv},~uv\in E$ are bounded, then $(G,\p)$ is self-adjoint. Indeed, in such a case the Rigidity Laplacian is a bounded operator.

Although self-adjointness of a framework is defined via the complex Hilbert
space \(\cH\), if a framework \((G,\p)\) is self-adjoint, we note that the
orthogonal projection
\[
  P=\mathbf 1_{\{0\}}(\bar L):\cH\to \ker(\bar L)
\]
onto the kernel of its rigidity Laplacian can be viewed as a real operator. Indeed, the
rigidity Laplacian has real entries, hence the projection \(P\) preserves the real subspace of $\cH$.
Thus, when applied to real vectors, \(P\) is the orthogonal projection onto
the real kernel of \(\bar L\).

\subsection{$\ell_2$-infinitesimal motions}
The following lemma extends the standard description for the kernel of the rigidity Laplacian from finite graphs to self-adjoint frameworks. In words, the kernel consists of all infinitesimal distance preserving motions that have a finite $\ell_2$-norm.
\begin{lemma}\label{lem:kernel}
Let $(G,\p)$ be a self-adjoint $d$-framework. Then,
\[
\ker \bar L(G,\p) = \{q\in\cH\mid \langle q(u)-q(v),\p_{uv}\rangle=0,~~\forall uv \in E(G)\}\,.
\]
\end{lemma}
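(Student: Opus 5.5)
Denote by $W$ the right-hand side, i.e.\ the set of $q\in\cH$ with $\langle q(u)-q(v),\p_{uv}\rangle=0$ for every edge $uv$. We prove the two inclusions separately. In both directions the key object is the formal rigidity Laplacian acting on arbitrary vectors: since $G$ is locally finite, the expression $(\mathcal L q)(u)=\sum_{v}L_{uv}q(v)$ makes sense for every $q\in\cH$, entrywise. We first identify the adjoint: $L^*$ is the restriction of $\mathcal L$ to $\{q\in\cH:\mathcal L q\in\cH\}$ (the maximal operator). Indeed, for $f\in D$ we have $\langle Lf,q\rangle=\langle f,\mathcal L q\rangle$, because only finitely many block entries are involved and $L_{uv}=L_{vu}^*$. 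Testing against $f=e_u\otimes x$ shows that $q\in\mathrm{dom}(L^*)$ forces $L^*q=\mathcal L q$. Self-adjointness gives $\bar L=\bar L^*=L^*$, so $\ker\bar L=\{q\in\cH:\mathcal L q=0\}$.

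\emph{The inclusion $W\subseteq\ker\bar L$.} For $q\in W$ we compute
\[
(\mathcal Lq)(u)=\sum_{w\sim u}\p_{uw}\p_{uw}^*(q(u)-q(w))=\sum_{w\sim u}\p_{uw}\,\langle q(u)-q(w),\p_{uw}\rangle=0 .
\]
This is a finite sum at every vertex, so $q\in\mathrm{dom}(L^*)$ and $q\in\ker L^*=\ker\bar L$.

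\emph{The inclusion $\ker\bar L\subseteq W$.} Here we cannot simply expand $\langle q,\mathcal Lq\rangle$ as a sum over edges, because rearranging that infinite sum is unjustified. Instead we use that $D$ is a core for $\bar L$. On $D$ the usual finite identity holds:
\[
\langle f,Lf\rangle=\sum_{uv\in E}|\langle f(u)-f(v),\p_{uv}\rangle|^2=\|Rf\|^2 ,
\]
where $R:D\to\ell_2(E)$ is the rigidity matrix. Take $q\in\ker\bar L$. Since $D$ is a core, there are $f_n\in D$ with $f_n\to q$ and $Lf_n\to\bar Lq=0$. Then
\[
\|R(f_n-f_m)\|^2=\langle f_n-f_m,L(f_n-f_m)\rangle\to0 ,
\]
so $Rf_n$ is Cauchy in $\ell_2(E)$ and converges to some $g$. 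Each coordinate $(Rf)_{uv}$ depends continuously on $f$ through two vertices only, so $g_{uv}=\langle q(u)-q(v),\p_{uv}\rangle$. Finally,
\[
\|g\|^2=\lim_n\|Rf_n\|^2=\lim_n\langle f_n,Lf_n\rangle=\langle q,0\rangle=0 ,
\]
since $f_n\to q$ and $Lf_n\to0$. Hence every edge constraint holds, i.e.\ $q\in W$.

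The main obstacle is this second inclusion, namely justifying the quadratic-form identity for a vector that is not finitely supported. The core approximation argument above handles it.

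Alternatively, the second inclusion follows directly from the first: $W$ is closed, the first inclusion gives $W\subseteq\ker\bar L$, and one shows $\ker\bar L\subseteq W$ via $\bar L=\bar R^*\bar R$. But the core argument is cleaner.

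The first inclusion is where self-adjointness is genuinely used. Without it, $\mathcal L q=0$ would only give $q\in\ker L^*$, which may be strictly larger than $\ker\bar L$.
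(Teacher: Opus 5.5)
Your proof is correct and is essentially the paper's argument. For $\ker\bar L\subseteq W$ you approximate by finitely supported vectors using that $D$ is a core. For $W\subseteq\ker\bar L$ you check $\langle q,Lf\rangle=0$ for $f\in D$ (your identification of $L^*$ with the maximal operator) and then use $\bar L=\bar L^*=L^*$.

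Two small points. The Cauchy-sequence detour is unnecessary, since $\|Rf_n\|^2=\langle f_n,Lf_n\rangle\to 0$ already forces every edge term to vanish in the limit. Also, the closing aside that the second inclusion \emph{follows directly from the first} is misstated, though it plays no role in your proof.
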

\begin{proof}
The dense subspace $D$ of finitely-supported vectors is a {\em core} for the self-adjoint operator $\bar L=\bar L(G,\p)$. That is, for every $q$ in the domain of $\bar L$ there are vectors $f_n \in D$ such that $f_n\to q$ and $Lf_n\to \bar L q$. 

Suppose that $q\in \ker \bar L$, and consider vectors $f_n \in D$ such that $f_n\to q$ and $Lf_n\to 0$. We have that
\[
\sum_{uv\in E}|\langle f_n(u)-f_n(v),\p_{uv}\rangle|^2 = \langle f_n,Lf_n\rangle \to \langle q,\bar Lq\rangle =0\,.
\]
Hence, for every edge $uv \in E$, 
\[
\langle q(u)-q(v),\p_{uv}\rangle=\lim_{n\to \infty}\langle f_n(u)-f_n(v),\p_{uv}\rangle = 0\,.
\]

On the other hand, suppose that $q\in\cH$ satisfies $\langle q(u)-q(v),\p_{uv}\rangle=0$ for every edge $uv\in E$. Let $f\in D$. Then,
\begin{align*}
\langle q,Lf\rangle &= \sum_{u \in V}\left\langle q(u),\sum_{v\sim u}\p_{uv}\p_{uv}^*(f(u)-f(v)) \right\rangle \\
&= \sum_{uv\in E}\left\langle q(u)-q(v),\p_{uv}\p_{uv}^*(f(u)-f(v)) \right\rangle=0\,,
\end{align*}
where the last equality follows from our assumption on $q$. Note that this summation is finite since $f\in D$. Therefore, by the definition of the adjoint, $\bar L^*q=0$, and the proof follows by the assumption that $\bar L$ is self-adjoint.
\end{proof}

The following definition is natural, but will not be of much use in this work.
\begin{definition}
    A self-adjoint $d$-framework $(G,\p)$ is called $\ell_2$ \emph{strictly $d$-rigid} if \[\ker \bar L(G,\p)=\{0\}\,.\]
\end{definition}

No finite graph is strictly-rigid since \(\ker L\) contains the trivial motions.
For infinite graphs, $\ell_2$ strict rigidity is more subtle since the trivial motions need not belong to
\(\ell_2\). Consequently, \(\ker \bar L\) may be trivial even when
\(\p\) is generic, as the following example illustrates. The example
also shows that, on the other hand, there are generic
embeddings for which \(\ker L\) is non-trivial even for infinite graphs.

\begin{example}\label{ex:strictly_rigid}
Let $G$ be a bounded-degree graph on $V=\N$ such that the induced subgraph $G_n$ on
$[n]=\{1,\dots,n\}$ is $d$-rigid for every $n\in\N$. Let $\p:V\to[0,1]^d$ be a generic embedding which is \emph{well-spread}, in the sense that for some $\varepsilon>0$, and for every affine subspace $F\subset\R^d$, there exist infinitely many $v\in V$ such that
\[
\dist(\p(v),F)\ge \varepsilon.
\]
For example, draw $\p(v)$ uniformly and independently from $[0,1]^d$ for each $v\in V$.
By boundedness of $G$ and $\p$, the framework $(G,\p)$ is self-adjoint. Let $q\in\ker(\bar L)$. Restricting $q$ to the induced subgraph $G_n$, the
assumption that $G_n$ is $d$-rigid implies that $q|_{[n]}$ is induced by a trivial motion. Since this holds for every $n$, it follows from the genericity of $\p$ that $q$ is induced by a single global trivial motion. That is, there exist a skew-symmetric matrix $A$ and a vector $t$ such that
\[
q(v)=A\p(v)+t \qquad \text{for all } v\in V.
\]
The well-spreadness assumption on $\p$ implies that
\begin{equation}\label{eq:series}
\sum_{v\in V}\|A\p(v)+t\|^2 = \infty
\end{equation}
for all $(A,t)\neq(0,0)$. Since $q\in\cH=\ell_2(V;\C^d)$, this forces $A=0$ and $t=0$, and hence $q=0$. Therefore,
$
\ker(\bar L)=\{0\}.
$

Note that a different choice of a (possibly) generic embedding $\p$ may lead to non-zero elements of $\cH$ arising from trivial motions: indeed, if there exist a skew-symmetric matrix $A$ and a vector $t\in\R^d$, not both zero, such that \eqref{eq:series} converges, then the corresponding trivial motion belongs to $\cH$ and hence lies in $\ker(\bar L)$.
\end{example}

In the example above, every vertex is completely constrained: no vertex
can move in any direction. However, this phenomenon can be localized to
some directions but not others. For instance, if we add a new vertex \(v\)
and connect it to one vertex \(u\) of the original graph, then the motion
of \(v\) is constrained to lie in the subspace orthogonal to \(\p_{uv}\).
As before, such behavior cannot occur in finite frameworks, because
trivial motions are always present. The following lemma characterizes the
forbidden directions for a vertex.

\begin{lemma}\label{lem:forbidden_directions}
    Let $(G,\p)$ be a self-adjoint $d$-framework, and $v\in V(G)$ a vertex. Let $P:\cH\to\ker\bar L(G,\p)$ 
    be the orthogonal projection, and $P_{v,v}$ the $d\times d$ block corresponding to $v$. Then, for every $x\in \C^d$,
    \[
    x\in\ker P_{v,v} \iff \langle x,q(v)\rangle=0,~~\forall q\in\ker \bar L(G,\p)\,.
    \]
\end{lemma}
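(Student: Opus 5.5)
The proof only uses that $P$ is an orthogonal projection and that $P_{v,v}$ is its compression to the coordinates of $v$. For $x\in\C^d$, let $e_v x\in\cH$ be the vector equal to $x$ at $v$ and zero elsewhere. The adjoint of the inclusion $x\mapsto e_v x$ is restriction to $v$. Hence $P_{v,v}x=(P(e_vx))(v)$, and
\[
\langle x,P_{v,v}x\rangle=\langle e_vx,P e_vx\rangle=\|Pe_vx\|^2 ,
\]
because $P$ is self-adjoint and idempotent. In particular $P_{v,v}$ is a positive semidefinite $d\times d$ matrix. For a positive semidefinite matrix, $P_{v,v}x=0$ holds if and only if $\langle x,P_{v,v}x\rangle=0$. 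Indeed, the nontrivial direction follows by writing $P_{v,v}=B^*B$ with $B=B^*\succeq 0$ (the square root): then $\langle x,P_{v,v}x\rangle=\|Bx\|^2=0$ forces $Bx=0$ and hence $P_{v,v}x=0$.

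Combining these two facts, $x\in\ker P_{v,v}$ if and only if $Pe_vx=0$, that is, $e_vx\perp \ker\bar L$. Now for any $q\in\ker\bar L$ we have $\langle q,e_vx\rangle=\langle q(v),x\rangle$. So $e_vx\perp\ker\bar L$ says exactly that $\langle x,q(v)\rangle=0$ for all $q\in\ker\bar L(G,\p)$; the order of the inner product is irrelevant because we are testing for zero. This proves the claimed equivalence.

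The only points needing care are the following.
\begin{itemize}
\item Self-adjointness of $\bar L$ is what makes $P=\mathbf 1_{\{0\}}(\bar L)$ a well-defined orthogonal projection onto $\ker\bar L$, via the functional calculus recalled above.
\item The equivalence $P_{v,v}x=0\iff\langle x,P_{v,v}x\rangle=0$ is needed because $\ker P_{v,v}$ is defined through $P_{v,v}x=0$ rather than through the quadratic form.
\end{itemize}
Lemma~\ref{lem:kernel} is not needed for the equivalence itself. It only serves to reinterpret $\ker\bar L$ as the space of $\ell_2$ infinitesimal motions, which gives the lemma its meaning as a statement about forbidden directions.
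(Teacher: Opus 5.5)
Your proof is correct and is essentially the paper's argument: you test $P$ on the vector $f$ supported at $v$ with value $x$, use $\langle x,P_{v,v}x\rangle=\|Pf\|^2$ together with positive semidefiniteness of $P_{v,v}$ to get $P_{v,v}x=0\iff Pf=0$, and then read $Pf=0$ as $f\perp\ker\bar L$. The only difference is that you spell out the positive-semidefinite step via the square root, which the paper leaves implicit.
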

\begin{proof}
    Let $f\in D$ be defined by $f(v)=x$ and $f(u)=0$ for every $u\ne v$. Then,
    \[
    \|Pf\|^2 = \langle f,Pf\rangle = \langle x,(Pf)(v)\rangle = \langle x,P_{v,v}x\rangle\,.
    \]
    Therefore, since $P_{v,v,}$ is PSD, $P_{v,v}x=0$ if and only if $Pf=0$. Since $P$ is an orthogonal projection, this is equivalent to the condition that for every $q\in\ker \bar L(G,\p),$
    \[
    \langle x,q(v)\rangle = \langle f, q\rangle = 0\,,
    \]
    as claimed.
\end{proof}

\subsection{Random frameworks}\label{sec:admissible}
Due to the illustrated subtleties in generic rigidity of infinite frameworks, 
and because it is better suited to our methods, we shall often restrict the class of embeddings considered in this work as follows. 
Given a possibly random graph \(G\) and a probability measure \(\nu\) on
\(\R^d\), we write \((G,\nu)\) for the random \(d\)-framework \((G,\p)\)
in which the vectors \(\p(v)\), \(v\in V(G)\), are sampled independently
from \(\nu\).

We shall mostly consider probability measures \(\nu\) on \(\R^d\) that we
call \emph{admissible}: namely, \(\nu\) is supported on a bounded nonempty
open set \(U\subset\R^d\), and is absolutely continuous with respect to
Lebesgue measure on \(U\). For example, one may take \(\nu\) to be the
uniform distribution on \([0,1]^d\).

\section{Infinitesimal local flexibility}\label{sec:local_flex}
We now introduce the notion of local flexibility, which plays a central
role in this work. Although the notion can be defined for finite
frameworks, we formulate it in a more general setting.

Let $(G,\p)$ be a self-adjoint $d$-framework. 
Recall that 
    \[
    P=\mathbf 1_{\{0\}}(\bar L):\cH\to \ker(\bar L)
    \]
    and $P_{v,v}$ is the real $d\times d$ block of $P$ corresponding to a vertex $v$. 
\begin{definition}
Let $(G,\p)$ be a self-adjoint $d$-framework and $v\in V(G)$. The {\em local flexibility} of $v$ in $(G,\p)$ is defined by
    \[
    \phi_{G,\p}(v)=\tr(P_{v,v})\,.
    \]
\end{definition}
Observe that an alternative description of the local flexibility is given by
\[
\phi_{G,\p}(v) = \sum_{j=1}^d \mu_{e_{v,j}}(\{0\})\,,
\]
where $e_{v,j}$ is the unit vector of the $j$-th coordinate in the block corresponding to $v$, and $\mu_{e_{v,j}}$ is the spectral measure of $\bar L$ with respect to $e_{v,j}$.

When the context is clear, we sometimes omit the subscript $G,\p$ in $\phi$.

Intuitively, the local flexibility quantifies the degrees of freedom that $v$ has in the framework. 
In particular, it is easy to see that $d-\deg_G(v)\le \phi (v)\le d$. In a finite graph, $\phi(v)>0$ due to the trivial motions. 
But, Example \ref{ex:strictly_rigid} shows that the local flexibility can vanish in infinite graphs.

Crucially, by standard linear algebra, if $G$ is finite then the dimension of the kernel of the rigidity matrix $R$ is given by
\begin{equation}\label{eq:dimZ_local}
\dim Z(G,\p) = \sum_{v\in V(G)} \phi_{G,\p}(v)\,,
\end{equation}
since both sides are equal to the trace of $P$. Thus we  view the local flexibility of $v$ as the local ``contribution'' of $v$ to the dimension $Z$.

\subsection{Upper semi-continuity under Benjamini--Schramm convergence}
We now show that the local flexibility $\phi_{G,\p}(v)$ is a {\em local parameter}. That is, it is upper semi-continuous 
under Benjamini--Schramm convergence. This is a crucial property for our applications, as it allows us to deduce results about the local flexibility of finite graphs from results about the local flexibility of their infinite local limit.

This is a direct adaptation to the rigidity Laplacian of previous results on the weak convergence of spectral measures of graph operators under local weak convergence (e.g., \cite{BordenaveLelargeSalezDiluted,AbertThomViragSpectralMeasure}).

\begin{lemma}\label{lem:upper_semi}
    Suppose that $(G,o)$ is the local weak limit of a graph sequence $G_n$. Let $\nu$ be a probability measure on $\R^d$ for which $(G,\nu)$ and $(G_n,\nu)$ are all almost surely self-adjoint. Then,
    \[
        \limsup_{n\to\infty}\E[\phi_{G_n,\nu}(o_n)]\le \E[\phi_{G,\nu}(o)]\,,
    \]
    where $o_n\in V(G_n)$ is a uniform random vertex.
\end{lemma}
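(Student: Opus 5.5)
The plan is the standard argument for weak convergence of spectral measures under local convergence, followed by the portmanteau theorem applied to the closed set $\{0\}$.

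For a rooted framework $(G,\p,o)$ and $j\in[d]$, let $\mu_j=\mu_{e_{o,j}}$ be the spectral measure of $\bar L(G,\p)$ at $e_{o,j}$, and set $\mu_{(G,\p,o)}=\sum_{j=1}^d\mu_j$. This is a measure on $[0,\infty)$ of total mass $d$, and
\[
\phi_{G,\p}(o)=\mu_{(G,\p,o)}(\{0\}).
\]
Consider the averaged measures
\[
\bar\mu_n=\E\big[\mu_{(G_n,\nu,o_n)}\big],\qquad \bar\mu=\E\big[\mu_{(G,\nu,o)}\big].
\]
Both are measures of mass $d$ on $[0,\infty)$. Since $\{0\}$ is closed, the portmanteau theorem gives $\limsup_n\bar\mu_n(\{0\})\le\bar\mu(\{0\})$ once we know $\bar\mu_n\to\bar\mu$ weakly, and this is exactly the claim. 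So the task reduces to proving this weak convergence.

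To prove weak convergence, I will use resolvents: it suffices to show that for every $z\in\C\setminus\R$,
\[
\E\big[\tr\,(\bar L_n-z)^{-1}_{o_n,o_n}\big]\to\E\big[\tr\,(\bar L-z)^{-1}_{o,o}\big].
\]
Equivalently, the Stieltjes transforms converge pointwise; since all masses equal $d$, this gives vague and hence weak convergence.

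To compare the resolvents I will truncate at radius $r$. Let $L^{(r)}$ be the rigidity Laplacian of the finite ball $B(o,r)$ with the same positions, so it is a finite matrix. I will show that
\[
(L^{(r)}-z)^{-1}_{o,o}\to(\bar L-z)^{-1}_{o,o}\quad\text{almost surely as } r\to\infty.
\]
The argument uses strong resolvent convergence. Extend $L^{(r)}$ by zero outside the ball. For every $f\in D$ we have $L^{(r)}f=\bar Lf$ once $r$ exceeds the support radius of $f$ plus one, since the blocks of $L$ near $f$ then coincide. Because $D$ is a core for the self-adjoint operator $\bar L$ (this is where the a.s. self-adjointness hypothesis enters), Reed--Simon Theorem VIII.25 gives strong resolvent convergence, and in particular convergence of the $(o,o)$ block. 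The $(o,o)$ block of $(L^{(r)}-z)^{-1}$ is a bounded function, of modulus at most $d/|\Im z|$, and it depends only on the rooted ball $B(o,r)$ together with its positions.

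Local weak convergence, combined with the fact that the positions $\p$ are i.i.d. $\nu$, implies that $(B_{G_n}(o_n,r),\nu)$ converges in distribution to $(B_G(o,r),\nu)$. Indeed, the balls agree in law on finitely many isomorphism classes up to small error, and we may couple the positions identically on isomorphic balls. Hence the expected truncated resolvents converge for each fixed $r$.

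It remains to interchange the limits in $r$ and $n$. Write the difference between the $n$-th and limiting expected resolvents as three terms:
\begin{enumerate}
\item the truncation error for $G_n$;
\item the difference of the truncated expectations, which tends to $0$ as $n\to\infty$ for each fixed $r$;
\item the truncation error for $G$, which tends to $0$ as $r\to\infty$ by dominated convergence.
\end{enumerate}

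The main obstacle is the first term: I need the truncation error for $G_n$ to be small uniformly in $n$. The plan is to avoid a uniform estimate altogether by using a coupling. By Skorokhod representation in the Polish space $\cG_\bullet$ with marks, realize $(G_n,\p_n,o_n)\to(G,\p,o)$ almost surely. Then for each fixed $r$, the balls of radius $r$ around the roots eventually agree. Given $\epsilon>0$, fix an outcome and choose $r$ with $|\text{trunc}_r(G)-\text{res}(G)|<\epsilon$. I will then argue that $\text{res}(G_n)$ is close to $\text{res}(G)$ by applying the same strong-resolvent argument simultaneously along the sequence. The precise route is to apply the Reed--Simon core criterion to the operators $L_n$, transported to $\ell_2$ of the limit graph's vertices through the ball isomorphisms of growing radius $r_n\to\infty$. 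For every $f\in D$ we then have $L_nf=\bar Lf$ for large $n$, so $(\bar L_n-z)^{-1}\to(\bar L-z)^{-1}$ strongly, which handles all three terms at once. Bounded convergence of the $(o,o)$ blocks then gives convergence of the expectations, and the portmanteau step finishes the proof.
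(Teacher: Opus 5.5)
Your final route is exactly the paper's proof: a Skorokhod coupling so that the rooted balls with positions eventually coincide on a common vertex set, then strong resolvent convergence via Reed--Simon Theorem VIII.25(a) using finitely supported vectors as a common core, then portmanteau at the closed set $\{0\}$, so the truncation and three-term decomposition can simply be dropped. The only cosmetic difference is that you average first (bounded convergence of the diagonal resolvent blocks gives weak convergence of $\E\mu_{(G_n,\nu,o_n)}$) and apply portmanteau once, whereas the paper applies portmanteau pathwise and then a reverse Fatou argument using $0\le\phi\le d$; also, when transporting to a common Hilbert space, put the vertices of $G_n$ outside the matched ball on labels not reused for other $n$, so that $L_nf=\bar Lf$ eventually holds for every finitely supported $f$.
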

\begin{proof}
     By the Skorokhod representation theorem, we may assume that $(G_n,o_n)\to (G,o)$ almost surely in $\cG_\bullet$. By choosing appropriate embeddings of all graphs into a common vertex set $V$, we may further assume that $o_n=o\in V$ for all $n$, and that for every $r\ge 2$ there exists $n_r$ such that, almost surely,
    \begin{equation}\label{eq:equal_balls}
    B_{G_n}(o,r)=B_G(o,r)\qquad\text{for all } n\ge n_r.
    \end{equation}
    Fix $j\in[d]$ and let $e_j = e_{o,j}\in\ell_2(V;\C^d)$ be the unit vector corresponding to the $j$-th coordinate at the root $o$.

    Let $\p:V\to \R^d$ be such that $\p(v)$, $v\in V(G)$, are independent samples of $\nu$, and denote $\p_n=\p|_{V(G_n)}$.
    By assumption, the frameworks $(G_n,\p_n)$ and $(G,\p)$ are almost surely self-adjoint. Hence the associated rigidity Laplacians $\bar L_n = \bar L(G_n,\p_n)$ and $\bar L = \bar L(G,\p)$ admit spectral measures $\mu^{(n)}_{e_{j}}$ and $\mu_{e_{j}}$ with respect to $e_{j}$.

From \eqref{eq:equal_balls} and the assumption $\p_n=\p|_{V(G_n)}$, we deduce that for every \(f\in D\), \(L_n f=Lf\) for all sufficiently large \(n\),
since $L_n,L$ are equal in the blocks of the support of $f$. 
Due to the fact that \(D\) is a core for all \(\bar L_n\) and \(\bar L\), \cite[Theorem VIII.25(a)]{ReedSimonI} implies that \(\bar L_n\to \bar L\) in the strong resolvent sense. Therefore, we obtain almost sure weak convergence of the spectral measures $\mu^{(n)}_{e_{j}}\Rightarrow \mu_{e_{j}}$. By the Portmanteau theorem,
    \[
        \limsup_{n\to\infty}\mu^{(n)}_{e_{j}}(\{0\})\le \mu_{e_{j}}(\{0\})\,.
    \]
    Summing over $j=1,\dots,d$ and recalling that $\phi_{G,\p}(o)=\sum_{j=1}^d \mu_{e_{o,j}}(\{0\})$, we obtain, almost surely,
    \[
        \limsup_{n\to\infty}\phi_{G_n,\p_n}(o_n)\le \phi_{G,\p}(o)\,.
    \]
    The proof then follows by Fatou's lemma, using that $0\le \phi_{G_n,\p_n}(o_n)\le d$.
\end{proof}

\section{Local flexibility in trees}\label{sec:trees}
\subsection{Self-adjointness of trees}
From a classical rigidity-theory perspective, finite trees are fairly easy to analyze and are arguably not particularly interesting. However, as we see here, from the perspective of local flexibility, trees---and infinite trees in particular---are nontrivial and very interesting to study. 

We start this investigation by formulating a useful sufficient condition for self-adjointness of frameworks whose underlying graph 
is an infinite tree $T$. We say that a finite subset $S\subset V(T)$ that induces a connected subtree is $C$-bounded, for some $C>0$, if no vertex in $S$ has more than $C$ neighbors outside of $S$.
\begin{lemma}\label{lem:tree_self_adjoint}
    Let $(T,o)$ be a locally finite rooted tree and $\p:V(T)\to\R^d$ such that $\{\p_{uv}\mid uv\in E\}$ is bounded. If there exists $C>0$ such that $V(T)$ is covered by $C$-bounded subsets containing $o$, then $(T,\p)$ is self-adjoint.
\end{lemma}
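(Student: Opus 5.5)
We will prove essential self-adjointness of the symmetric operator \(L=L(T,\p)\) on the dense domain \(D\) of finitely supported vectors. Since \(L\) is symmetric with real entries and nonnegative (\(\langle f,Lf\rangle=\sum_{uv}|\langle f(u)-f(v),\p_{uv}\rangle|^2\ge 0\)), the standard criterion applies: it suffices to show that \(\ker(L^*+1)=\{0\}\), where \(L^*\) is the adjoint. Here \(L^*\) acts on its domain by the same formal block matrix, \((L^*q)(u)=\sum_{v\sim u}L_{uv}q(v)\), with domain \(\{q\in\cH : \text{this formal } Lq \in \cH\}\). So we fix \(q\in\cH\) with \(L q=-q\) formally, and aim to show \(q=0\).

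The plan is a localized energy (Caccioppoli-type) estimate along the \(C\)-bounded sets. Let \(S\ni o\) be \(C\)-bounded, and let \(M=\sup_{uv\in E}\|\p_{uv}\|\). Write \(\partial S\) for the set of edges \(uv\) with \(u\in S\) and \(v\notin S\), and \(E(S)\) for the edges inside \(S\). Take the inner product of \(Lq=-q\) with \(q\) over \(S\) and use the identity \((Lq)(u)=\sum_{v\sim u}\p_{uv}\p_{uv}^*(q(u)-q(v))\). Summation by parts over the edges gives
\[
\sum_{u\in S}\|q(u)\|^2+\sum_{uv\in E(S)}|\langle q(u)-q(v),\p_{uv}\rangle|^2
= -\,\mathrm{Re}\sum_{uv\in\partial S}\langle q(u),\p_{uv}\rangle\overline{\langle q(u)-q(v),\p_{uv}\rangle}.
\]
The right-hand side is bounded by \(M^2\sum_{uv\in\partial S}\|q(u)\|\,(\|q(u)\|+\|q(v)\|)\). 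Since each \(u\in S\) has at most \(C\) outside neighbours, this is at most \(C'M^2\sum_{u\in S^\partial}\|q(u)\|^2+M^2\sum_{v\in N(S)\setminus S}\|q(v)\|^2\). Here \(S^\partial\subseteq S\) is the inner boundary, \(N(S)\setminus S\) is the outer boundary, and \(C'\) depends only on \(C\).

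The tree structure and the \(C\)-bounded covering must then upgrade this into \(q=0\). The step I expect to be the main obstacle is that the boundary terms are not obviously small: the sets \(S\) may have large inner boundaries. I would exploit the tree structure as follows. Removing the edge \(uv\in\partial S\) separates \(T\) into two components, and the subtree \(T_v\) hanging off \(v\) away from \(S\) is disjoint from \(S\). I would first try a variant of the identity above with cutoff weights, in the style of the Chernoff or Gaffney argument. Use \(f_S=\mathbf 1_S q\) rather than a sharp cutoff, and note that \(\langle q,(L+1)f_S\rangle=0\) for every finitely supported \(f_S\), since \(q\perp \mathrm{ran}(L+1)\). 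Expanding this, the commutator \([L,\mathbf 1_S]\) is supported on \(\partial S\) and has operator norm at most \(2CM^2\), because each vertex contributes at most \(C\) crossing edges with bounded blocks. This gives
\[
\|\mathbf 1_S q\|^2\le \langle \mathbf 1_S q,(L+1)\mathbf 1_S q\rangle=\mathrm{Re}\langle q,[\,\mathbf 1_S,L\,]\mathbf 1_S q\rangle\le 2CM^2\|q\,\mathbf 1_{N(S)\setminus S}\|\,\|q\,\mathbf 1_{S}\|,
\]
using that the commutator, restricted to the range of \(\mathbf 1_S\), only maps into the outer boundary. Hence \(\|\mathbf 1_S q\|\le 2CM^2\|q\,\mathbf 1_{N(S)\setminus S}\|\).

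Finally, we take an increasing sequence of \(C\)-bounded sets \(S_k\ni o\) exhausting \(V(T)\). This is possible because connected subtrees containing \(o\) are closed under union, and a union of finitely many \(C\)-bounded sets should be checked, or arranged, to be \(C\)-bounded; this is the second point to verify, and otherwise we would use \(S_k\) covering a growing ball. Since \(q\in\ell_2\), the mass \(\|q\,\mathbf 1_{V\setminus S_k}\|\) tends to \(0\), so the right-hand side tends to \(0\) while \(\|\mathbf 1_{S_k}q\|\to\|q\|\). Thus \(q=0\), and the same holds for \(L^*-\ii\), so \(\bar L\) is self-adjoint.
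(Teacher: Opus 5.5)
Your final argument is correct and is essentially the paper's proof. The paper uses the deficiency criterion $\ker(\bar L^*\pm\ii)=\{0\}$ and takes imaginary parts, while you use positivity and $\ker(L^*+1)$, but both reduce to the same boundary estimate $\|\mathbf 1_S q\|\le K\|\mathbf 1_{V\setminus S}q\|$ (via each vertex outside a connected $S$ having at most one neighbour in $S$) followed by exhaustion; the point you left to check is immediate, since for $C$-bounded $S_1,S_2\ni o$ the neighbours of $u\in S_1$ outside $S_1\cup S_2$ are among those outside $S_1$, and the union is connected because both sets contain $o$.
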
 
\begin{proof}
    By \cite[Theorem VIII.3]{ReedSimonI}, to show that $\bar L=\bar L(T,\p)$ is self-adjoint it suffices to prove that there is no non-zero vector $\psi$ in the domain of $\bar L^*$ for which $L^*\psi=\pm \ii\psi$. We consider the case $\bar L^*\psi=-\ii\psi$, and note that the proof of the other case is nearly identical. For a vertex $u$, let $\psi_u\in D$ be defined by $\psi_u(u)=\psi(u)$ and $\psi_u(v)=0$ for every $v\ne u$. Therefore,
    \[
        \bar L\psi_u(v) = L\psi_u(v) =
        \begin{cases}
        \displaystyle \sum_{w\sim u} \langle \psi(u),\p_{uw}\rangle\p_{uw}, & u=v,\\[1ex]
        \displaystyle -\,\langle \psi(u),\p_{uv}\rangle\p_{uv}, & \{u,v\}\in E,\\[1ex]
        0, & \text{otherwise}\,.
        \end{cases}
    \]
    Hence,
    \[
        \langle \bar L\psi_u,\psi\rangle =\sum_{v\sim u}\langle\psi(u),\p_{uv}\rangle\langle\p_{uv},\psi(u)-\psi(v)\rangle\,. 
    \]
    On the other hand, by our assumption on $\psi$,
    \[
        \langle \bar L\psi_u,\psi\rangle = \langle \psi_u,\bar L^*\psi \rangle = +\ii\|\psi(u)\|^2\,.
    \]    
    Therefore, for any finite subset $S\subset V(T)$ there holds
    \[
        \ii\sum_{u\in S}\|\psi(u)\|^2 =
        \sum_{uv\in E(S)}|\langle\psi(u)-\psi(v),\p_{uv}\rangle|^2
        +\sum_{uv\in E(S,S^c)}\langle\psi(u),\p_{uv}\rangle\langle\p_{uv},\psi(u)-\psi(v)\rangle\,. 
        \]
        Here $E(S)$ denotes the edges induced by subset $S$, and $uv\in E(S,S^c)$ denotes that $u\in S,v\notin S$ and $u\sim v$. By considering the imaginary part of this equality and applying the triangle and Cauchy--Schwarz (CS) inequalities we obtain 
        \begin{equation}\label{eq:sa_norm}
        \sum_{u\in S}\|\psi(u)\|^2 \leq
        \sum_{uv\in E(S,S^c)} \|\psi(u)\|\cdot \|\p_{uv}\|^2\cdot \|\psi(v)\|\,.
        \end{equation}
        Suppose that $S$ is $C$-bounded, and that $\|\p_{uv}\|^2\le C$ for all $uv\in E$. In such a case, we find by applying CS twice that
        \begin{align*}
            \sum_{u\in S}\|\psi(u)\|^2 
            \leq ~&C\cdot \sum_{u\in S} \|\psi(u)\|\cdot C^{1/2}\cdot\left(\sum_{v\sim u,v\notin S}\|\psi(v)\|^2\right)^{1/2}\\
            \leq~
            &C^{1.5}\cdot \left(\sum_{u\in S} \|\psi(u)\|^2\right)^{1/2}\cdot\left(\sum_{u\in S}\sum_{v\sim u,v\notin S}\|\psi(v)\|^2\right)^{1/2} \\
            \leq~
            & C^{1.5}\cdot \left(\sum_{u\in S} \|\psi(u)\|^2\right)^{1/2}\cdot\left(\sum_{v\notin S}\|\psi(v)\|^2\right)^{1/2}\,.
        \end{align*}
    The last inequality follows from the assumption that $S$ induces a connected subtree, hence every $v\notin S$ has at most one neighbor in $S$. A straightforward manipulation yields
        \[
        \sum_{u\in S}\|\psi(u)\|^2 \leq \frac{C^3}{C^3+1}\|\psi\|^2\,.   
        \]
    On the other hand, the family of $C$-bounded subsets that contain the root is closed under taking finite unions, whence our assumption that $V(T)$ is covered by such sets implies that the supremum of $\sum_{u\in S}\|\psi(u)\|^2$ over all $C$-bounded subsets $S$ is $\|\psi\|^2$, yielding a contradiction.
\end{proof}

\subsection{Forward flexibility matrices and their recursive formula}\label{sec:recursive}

For a rooted tree $(T,o)$ and a vertex $v\in V(T)$ we denote by $T^v$ the subtree of $T$ induced by $v$ and its descendants --- all the vertices $u$ for which the shortest path in $T$ from $o$ to $u$ passes through $v$ --- 
and view it as rooted at $v$. 
In addition, for an embedding $\p:V\to \R^d$, we denote by $\p^v=\p|_{T^v}$. 

It is easy to see that if $(T,\p)$ is self-adjoint then so is $(T^v,\p^v)$. In such a case, we consider the orthogonal projection
\[
P^v=\mathbf 1_{\{0\}}(\bar L(T^v,\p^v)):\ell_2(V(T^v);\C^d)\to\ker(\bar L(T^v,\p^v))\,,
\]
and call its restriction $Q_v=(P^v)_{vv}$ to the $d\times d$ block corresponding to the vertex $v$ the {\em forward flexibility matrix} of $v$ in $T$.
Note that $Q_v$ has real entries, and recall that its trace is the local flexibility of $v$ in $(T^v,\p^v)$. 
In addition, by Lemma \ref{lem:forbidden_directions}, the kernel of $Q_v$ consists of the forbidden directions for $v$ in this framework $(T^v,\p^v)$.

The next theorem finds a recursive formula for computing $Q_o$ using the vectors $\p_{ov}$ and the matrices $Q_v$, over all children $v$ of the root $o$.

\begin{theorem}\label{thm:recursion}
Let $(T,\p)$ be a self-adjoint $d$-framework whose underlying graph $T$ is a tree rooted at $o$. Denote by
$v_1,\dots,v_m$ the children of the root $o$. Let
\[
H=\{\,i\in[m]\mid Q_{v_i}\p_{ov_i}=0\,\},
\]
$W\le \C^d$ be the orthogonal complement of
$\mathrm{span}_{\C}\{\p_{ov_i}\mid i\in H\}$, and $P^W:\C^d\to W$ be the orthogonal projection. Then,
\begin{enumerate}
\item $Q_o\p_{ov_i}=0$ for every $i\in H$.
\item
\[
Q_o|_W
=
\left(
\Id_W+\sum_{i\notin H}\frac{P^W\p_{ov_i}(P^W\p_{ov_i})^*}{\p_{ov_i}^*Q_{v_i}\p_{ov_i}}
\right)^{-1}.
\]
\end{enumerate}
\end{theorem}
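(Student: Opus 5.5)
The plan is to work with a variational characterization of the forward flexibility matrices rather than with the spectral calculus directly. Let $K=\ker\bar L(T,\p)$ and let $E:K\to\C^d$ be the evaluation map $q\mapsto q(o)$, which is bounded. For $x\in\C^d$ and $f=\delta_o\otimes x$ we have, as in the proof of Lemma~\ref{lem:forbidden_directions}, $\langle x,Q_ox\rangle=\langle f,Pf\rangle$. Also $Pf$ is the element of $K$ representing the functional $q\mapsto\langle x,q(o)\rangle$, so $E^*x=Pf$ and $Q_o=EE^*$. Since $E$ has finite rank, $\mathrm{range}(Q_o)=\mathrm{range}(E)$. Standard finite-rank linear algebra (minimal-norm solutions) then gives, for every $y\in\mathrm{range}(E)$,
\[
\langle y,Q_o^{+}y\rangle=\min\{\|q\|^2 \mid q\in K,\ q(o)=y\},
\]
with the minimum attained because $K\cap E^{-1}(y)$ is a closed affine subspace. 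The same reasoning applied to the bounded functional $q\mapsto\langle q(v_i),\p_{ov_i}\rangle$ on $K_i=\ker\bar L(T^{v_i},\p^{v_i})$ gives the following. If $Q_{v_i}\p_{ov_i}\neq0$, then
\[
\min\{\|q\|^2\mid q\in K_i,\ \langle q(v_i),\p_{ov_i}\rangle=c\}=\frac{|c|^2}{\p_{ov_i}^*Q_{v_i}\p_{ov_i}},
\]
since the squared norm of this functional equals $\p_{ov_i}^*Q_{v_i}\p_{ov_i}$. If $Q_{v_i}\p_{ov_i}=0$, Lemma~\ref{lem:forbidden_directions} says the functional vanishes on $K_i$, so only $c=0$ is feasible.

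Next I will decompose $K$ along the children. By Lemma~\ref{lem:kernel}, applied to $(T,\p)$ and to each self-adjoint $(T^{v_i},\p^{v_i})$, a vector $q\in\cH$ lies in $K$ if and only if two conditions hold. First, each restriction $q_i=q|_{T^{v_i}}$ lies in $K_i$. Second, $\langle q_i(v_i)-q(o),\p_{ov_i}\rangle=0$ for all $i$; this holds because the only edges of $T$ not inside some subtree are the edges $ov_i$. Moreover $\|q\|^2=\|q(o)\|^2+\sum_i\|q_i\|^2$. Since the constraints on different $q_i$ are decoupled once $q(o)=y$ is fixed, the minimization splits:
\[
\min\{\|q\|^2\mid q\in K,\ q(o)=y\}=\|y\|^2+\sum_{i=1}^m\min\{\|q_i\|^2\mid q_i\in K_i,\ \langle q_i(v_i),\p_{ov_i}\rangle=\langle y,\p_{ov_i}\rangle\}.
\]
Here $m$ is finite by local finiteness, so there are no summability issues, and glued minimizers are in $\ell_2$.

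The conclusions follow from this formula. It is finite exactly when $\langle y,\p_{ov_i}\rangle=0$ for all $i\in H$, that is, when $y\in W$. Hence $\mathrm{range}(E)=W$, so $\mathrm{range}(Q_o)=W$. Because $Q_o$ is self-adjoint, its kernel is $W^\perp=\mathrm{span}\{\p_{ov_i}\mid i\in H\}$, which gives item (1). For $y\in W$ we have $\langle y,\p_{ov_i}\rangle=\langle y,P^W\p_{ov_i}\rangle$. So the quadratic form of $(Q_o|_W)^{-1}$ is
\[
\|y\|^2+\sum_{i\notin H}\frac{|\langle P^W\p_{ov_i},y\rangle|^2}{\p_{ov_i}^*Q_{v_i}\p_{ov_i}}.
\]
Polarization identifies this form with the displayed operator on $W$, which is invertible since it is at least $\Id_W$; this gives item (2).

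The main obstacle is justifying the identity $Q_o=EE^*$ together with the pseudo-inverse and min-norm description in the Hilbert-space setting. This requires that $K$ be closed, that $E$ be bounded, and that the kernel description of Lemma~\ref{lem:kernel} hold simultaneously for $T$ and all subtrees. The last point rests on the remark that self-adjointness passes to $(T^v,\p^v)$, which I will take as given in the statement's hypotheses.
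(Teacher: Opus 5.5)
Your proposal is correct, and it follows a genuinely different route from the paper.

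The paper works away from $z=0$. For $z=\ii s$ it splits $\bar L=\bigoplus_i\bar L^{v_i}+M$, with $M$ the star Laplacian at $o$. It then uses the second resolvent identity and Sherman--Morrison to get the exact identity \eqref{eq:recursion}, which relates $-zR_{o,o}(z)$ to the blocks $-zR'_{v_i,v_i}(z)$. Finally it lets $z\to0$, using $-zR(z)\to P$ strongly. This forces it to check that $I+N_iR'_{v_i,v_i}$ is invertible, and that for $i\notin H$ the denominators tend to the nonzero numbers $\p_{ov_i}^*Q_{v_i}\p_{ov_i}$. Item (1) is proved separately.

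You stay at $z=0$ throughout. The identity $Q_o=EE^*$ turns $(Q_o|_W)^{-1}$ into a minimal-energy problem over $\ker\bar L$ (a Dirichlet principle, or Schur complement). Lemma~\ref{lem:kernel} makes this problem decouple over the children. Each child contributes the least $\ell_2$-energy of a motion of $T^{v_i}$ whose velocity at $v_i$ has the same component along the bar $ov_i$ as $y$.

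What each route buys:
\begin{itemize}
\item Your argument is more elementary and more transparent. It delivers both items at once, reading off $\ker Q_o=\mathrm{span}\{\p_{ov_i}\mid i\in H\}$ directly from $\mathrm{range}(E)=W$.
\item The paper's route gives an exact recursion for the resolvent block at the root for every $s>0$. That is information about the whole spectral measure at $o$, not only its atom at $0$, and it uses the same resolvent language as Lemma~\ref{lem:upper_semi}.
\end{itemize}

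The points you flag as the main obstacle are routine. $K$ is closed as the range of $P$, and $\|Eq\|\le\|q\|$. Self-adjointness of the subframeworks $(T^{v_i},\p^{v_i})$ is already presupposed by the definition of $Q_{v_i}$.
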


\begin{proof}
Write $\bar L=\bar L(T,\p)$ and $\bar L^{v_i}=\bar L(T^{v_i},\p^{v_i})$ for $i\in[m]$.
For the first item, let $i\in H$. For every $q\in\ker\bar L$ there holds that $q|_{T^{v_i}}\in\ker \bar L^{v_i}$ by
the characterization of the kernel in Lemma~\ref{lem:kernel}. Hence, by Lemma~\ref{lem:forbidden_directions},
\[
\langle q(v_i),\p_{ov_i}\rangle=\langle q|_{T^{v_i}}(v_i),\p_{ov_i}\rangle=0\,,
\]
due to the assumption that $i\in H$. On the other hand, by Lemma~\ref{lem:kernel} we have
\[
\langle q(o)-q(v_i),\p_{ov_i}\rangle=0\,.
\]
Therefore, $\langle q(o),\p_{ov_i}\rangle=0$. The claim follows from Lemma~\ref{lem:forbidden_directions}.

We proceed to the second item. Let $P$ be the orthogonal projection onto $\ker\bar L$.
For $z=\ii\cdot s$ with $s>0$ real, set
\[
R(z):=(\bar L-zI)^{-1}.
\]
Define $h_z(\lambda):=-z(\lambda-z)^{-1}$ for $\lambda\in\R$. By the functional calculus,
$h_z(\bar L)=-zR(z)$ is a bounded operator. For $\lambda\neq 0$ we have $h_z(\lambda)\to 0$ as $z\to 0$, while $h_z(0)=1$.
Moreover,
\[
|h_z(\lambda)|=\frac{|z|}{|\lambda-z|}\le 1\,,
\]
since $z$ is purely imaginary. Thus $h_z\to \mathbf 1_{\{0\}}$ pointwise and $(h_z)$ is uniformly bounded, so by dominated
convergence in the spectral theorem,
\[
h_z(\bar L)f\to Pf\qquad\text{for every }f\in\cH
\]
as $z\to 0$. In particular, applying this entrywise to the $d\times d$ block at $o$ gives
\[
-zR(z)_{o,o}\to P_{o,o}=Q_o\quad \mbox{as }z\to 0.
\]

Similarly, let
\[
R'(z):=\left(\sum_{i=1}^m \bar L^{v_i}-zI\right)^{-1}.
\]
Since $T$ is a tree, $\sum_{i=1}^m\bar L^{v_i}$ is a direct sum, hence so is $R'(z)$, and therefore for each $i\in[m]$,
\[
-zR'(z)_{v_iv_i}\to Q_{v_i}
\quad \mbox{as }z\to 0.
\]

We now derive the recursion. We can write
\[
\bar L=\sum_{i=1}^m \bar L^{v_i}+M,
\]
where $M$ is the rigidity Laplacian of the star at $o$. 
More explicitly, by letting $N_i=\p_{ov_i}\p_{ov_i}^*$ for $i\in [m]$, we have
\[
M_{uv}=
\begin{cases}
\sum_{j=1}^m N_j, & (u,v)=(o,o),\\
-N_i, & (u,v)=(o,v_i)\ \text{or}\ (v_i,o),\\
N_i, & (u,v)=(v_i,v_i),\\
0, & \text{otherwise}.
\end{cases}
\]
By the tree structure,
\begin{equation}\label{eq:Rprime_blocks}
R'_{o,o}(z)=-z^{-1}I,\qquad R'_{o,v_i}(z)=0,\qquad R'_{v_i,v_j}(z)=0\ \ (i\neq j).
\end{equation}

The second resolvent identity gives $R(z)-R'(z)=-R(z)MR'(z)$ (we sometimes omit the dependence on $z$ for brevity). 
Recalling \eqref{eq:Rprime_blocks}, we obtain that for every $i\in[m]$,
\begin{align*}
R_{o,v_i}
&=-R_{o,o}M_{o,v_i}R'_{v_i,v_i}-R_{o,v_i}M_{v_i,v_i}R'_{v_i,v_i}\\
&=R_{o,o}N_iR'_{v_i,v_i}-R_{o,v_i}N_iR'_{v_i,v_i}.
\end{align*}

so
\begin{equation}\label{eq:Rovi}
R_{o,v_i}=R_{o,o}N_iR'_{v_i,v_i}\,(I+N_iR'_{v_i,v_i})^{-1}.
\end{equation}
Note that in \eqref{eq:Rovi} we used that $I+N_iR'_{v_i,v_i}$ is invertible. This holds since $N_i=\p_{ov_i}\p_{ov_i}^*$, whence matrix determinant lemma yields
\[
\det(I+N_iR'_{v_i,v_i})=1+\p_{ov_i}^*R'_{v_i,v_i}\p_{ov_i}.
\]
For $z=\ii s$ with $s>0$, since $R'$ is the resolvent of a self-adjoint operator, we have
\[
\Im\bigl(\p_{ov_i}^*R'_{v_i,v_i}\p_{ov_i}\bigr)>0.
\]
In particular, $\p_{ov_i}^*R'_{v_i,v_i}\p_{ov_i}\notin\R$,
hence the determinant above is nonzero.  

Next, using the Sherman--Morrison formula for rank-one updates, \eqref{eq:Rovi} simplifies to
\begin{equation}\label{eq:Rovi_simplified}
R_{o,v_i}
=
R_{o,o}\cdot \frac{N_iR'_{v_i,v_i}}{1+\p_{ov_i}^*R'_{v_i,v_i}\p_{ov_i}}\,.
\end{equation}

Applying again $R-R'=-RMR'$ for the $(o,o)$ block, and using \eqref{eq:Rprime_blocks}, gives
\begin{align*}
R_{o,o}+z^{-1}I
&=-R_{o,o}M_{o,o}R'_{o,o}-\sum_{i=1}^m R_{o,v_i}M_{v_i,o}R'_{o,o}\\
&=z^{-1}\left(R_{o,o}\sum_{i=1}^m N_i-\sum_{i=1}^m R_{o,v_i}N_i\right)\\
&=z^{-1}R_{o,o}\sum_{i=1}^m\left(N_i-\frac{N_iR'_{v_i,v_i}N_i}{1+\p_{ov_i}^*R'_{v_i,v_i}\p_{ov_i}}\right)\\
&=z^{-1}R_{o,o}\sum_{i=1}^m\frac{N_i}{1+\p_{ov_i}^*R'_{v_i,v_i}\p_{ov_i}}\,,
\end{align*}
where in the third equality we used \eqref{eq:Rovi_simplified}, and in the fourth equality we used
$N_iR'_{v_i,v_i}N_i=(\p_{ov_i}^*R'_{v_i,v_i}\p_{ov_i})\,N_i$.
Rearranging yields the recursion
\begin{equation}\label{eq:recursion}
-zR_{o,o}(z)\left(I+\sum_{i=1}^m\frac{\p_{ov_i}\p_{ov_i}^*}{-z+\p_{ov_i}^*(-zR'_{v_i,v_i}(z))\p_{ov_i}}\right)=I.
\end{equation}

Now fix $w\in W$. Since $\p_{ov_i}^*w=0$ for $i\in H$, multiplying \eqref{eq:recursion} by $w$ gives
\[
-zR_{o,o}(z)\left(
w+\sum_{i\notin H}\frac{\p_{ov_i}\p_{ov_i}^*w}{-z+\p_{ov_i}^*(-zR'_{v_i,v_i}(z))\p_{ov_i}}
\right)=w.
\]
Letting $z\to 0$ along the imaginary axis, we use $-zR_{o,o}(z)\to Q_o$ and $-zR'_{v_i,v_i}(z)\to Q_{v_i}$.
Moreover, for $i\notin H$ we have $\p_{ov_i}^*Q_{v_i}\p_{ov_i}\neq 0$ by definition of $H$, hence the denominators converge to
$\p_{ov_i}^*Q_{v_i}\p_{ov_i}$. Therefore,
\[
Q_o\left(
w+\sum_{i\notin H}\frac{\p_{ov_i}\p_{ov_i}^*w}{\p_{ov_i}^*Q_{v_i}\p_{ov_i}}
\right)=w\,.
\]
Finally, by the first item we have $Q_oP^W=Q_o$, and clearly $P^Ww=w$ for $w\in W$. Hence, replacing $\p_{ov_i}$ by $P^W\p_{ov_i}$ inside the
rank-one terms does not change the action on $W$, and we obtain
\[
Q_o|_W=
\left(
\Id_W+\sum_{i\notin H}\frac{P^W\p_{ov_i}(P^W\p_{ov_i})^*}{\p_{ov_i}^*Q_{v_i}\p_{ov_i}}
\right)^{-1},
\]
as claimed.
\end{proof}

In the next lemma we study the event that in the random framework $(T,\nu)$, the forward flexibility matrix of the root $o$ of $T$ is trivial.

\begin{lemma}\label{lem:Qo0}
    Let $\nu$ be an admissible probability measure on $\R^d$, and let $(T,o)$ be a rooted tree such that $(T,\nu)$ is almost surely self-adjoint. 
    Then, almost surely, $Q_o=0$ if and only if $o$ has at least $d$ children $v$ for which $Q_v=0$. 
    In particular, the event $Q_o=0$ is independent of $\p(o)$.
\end{lemma}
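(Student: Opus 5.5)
Everything here will follow from Theorem~\ref{thm:recursion} together with a genericity argument that uses admissibility of $\nu$. Fix the root $o$ with children $v_1,\dots,v_m$. By Theorem~\ref{thm:recursion}, $Q_o$ vanishes on $\mathrm{span}\{\p_{ov_i}\mid i\in H\}$. On $W$ it is the inverse of a positive definite operator, so it is injective on $W$. Hence $Q_o=0$ if and only if $W=\{0\}$, that is, if and only if $\{\p_{ov_i}\mid i\in H\}$ spans $\C^d$. It therefore suffices to prove that, almost surely,
\[
H=\{\,i\in[m]\mid Q_{v_i}=0\,\},
\]
and that $d$ vectors $\p_{ov_i}$, $i\in H$, span $\C^d$ almost surely whenever $|H|\ge d$. (If $|H|<d$, spanning is impossible.)

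For the second point: conditioned on the subtrees $T^{v_i}$ and their embeddings $\p^{v_i}$, the vectors $\p_{ov_i}=\p(o)-\p(v_i)$ are affine images of $\p(o)$. Since the $\p(v_i)$ are independent with an absolutely continuous law, $d$ such differences are linearly dependent only on a null set. Concretely, the condition is $\det[\p(o)-\p(v_{i_1}),\dots,\p(o)-\p(v_{i_d})]=0$, a nonzero polynomial in $(\p(v_{i_1}),\dots,\p(v_{i_d}))$.

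The key step is the first point: if $Q_{v}\ne0$ then $Q_v\p_{ov}\ne0$ almost surely. Here $Q_v$ is a measurable function of $(T^v,\p^v)$ only, and $\p^v$ includes $\p(v)$. The difficulty is that $\p_{ov}=\p(o)-\p(v)$ also depends on $\p(v)$, so one cannot just condition on $Q_v$ and randomize $\p_{ov}$ through $\p(v)$. Instead, randomize through $\p(o)$, which is independent of $(T^v,\p^v)$ and is never used by $Q_v$. Condition on $\p^v$. Then $\ker Q_v$ is a fixed proper subspace of $\C^d$ when $Q_v\ne0$, and its real part is a proper subspace $K\subsetneq\R^d$ because $Q_v$ is real. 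The event $\p(o)\in \p(v)+K$ has $\nu$-measure zero, since $\nu$ is absolutely continuous and $K$ has Lebesgue measure zero. Fubini gives the almost sure statement, and a union over the countably many children handles all $i$ simultaneously. Conversely, $Q_v=0$ trivially puts $i\in H$.

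Combining the two points, almost surely $Q_o=0$ if and only if at least $d$ children satisfy $Q_{v_i}=0$. This event depends only on $(T^{v_i},\p^{v_i})_i$. Since each $Q_{v_i}$ is determined by $\p^{v_i}$, the event does not involve $\p(o)$, which gives the independence claim. The main care point is doing the conditioning in the right order: $\p(o)$ must be chosen last, as an independent variable that $Q_v$ never sees. Two smaller checks are also needed: that the self-adjointness of the subtrees, used to define the $Q_{v_i}$, holds almost surely (as noted before Theorem~\ref{thm:recursion}), and that the $Q_{v_i}$ are measurable, which follows from their description as limits of resolvent entries.
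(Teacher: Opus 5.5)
Your proof is correct and follows the paper's argument: you use the independence of $\p(o)$ from $Q_{v_i}$ and the absolute continuity of $\nu$ to get that almost surely $Q_{v_i}\p_{ov_i}=0$ if and only if $Q_{v_i}=0$, so $H=\{i: Q_{v_i}=0\}$; Theorem~\ref{thm:recursion} then reduces $Q_o=0$ to $W=\{0\}$, which by genericity happens if and only if $|H|\ge d$. One small cleanup in the spanning step: do not condition on the $\p^{v_i}$, since that fixes the $\p(v_i)$ you then randomize; instead note that, unconditionally, almost surely every $d$ of the vectors $\p_{ov_i}$ are linearly independent, which then applies to $H$ however $H$ depends on the embedding.
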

Note that for a fixed tree \((T,o)\), the lemma shows that \(Q_o=0\) is a tail event, and therefore admits a \(0/1\) law.
However, we do not know whether the probability of this event can depend on the admissible distribution \(\nu\), or whether it depends only on the tree structure.

\begin{proof}
First, we claim that almost surely, for every edge $uv\in E(T)$ with $u$ the parent of $v$, we have
\begin{equation}\label{eq:QvpuvQv0}
Q_v\p_{uv}=0 \quad\Longleftrightarrow\quad Q_v=0.    
\end{equation}
Indeed, $\p(u)$ is independent of $Q_v$, and by our assumption on $\nu$, the vector $\p_{uv}$ almost surely does not belong to $\ker Q_v$, unless $Q_v=0$.

Denote by $v_1,\ldots,v_m$ the children of the root $o$. By \eqref{eq:QvpuvQv0}, the set $H$ from Theorem \ref{thm:recursion} is almost surely equal to
\begin{equation}\label{eq:H}
H=\{i\in[m]\mid Q_{v_i}=0\}.
\end{equation}
Theorem~\ref{thm:recursion} implies that $Q_o=0$ if and only if the subspace $W$, which is the orthogonal complement of the linear span of $\{\p_{ov_i}: i\in H\}$, is trivial.
By our assumption on $\nu$, this occurs if and only if $|H|\ge d$, as claimed.
\end{proof}

\subsection{Galton--Watson frameworks}
\label{sec:GW}
We now turn to study the local flexibility of a Galton--Watson $\mathrm{GW}(X,Y)$ branching process $(T,o)$ as defined in Section \ref{sec:preliminaries}.
Here $X,Y$ are probability distributions on $\N_{\ge 0}$ that describe the offspring distribution of the root and every other vertex respectively.
Mostly, we will be interested in the case where $Y$ is the size-biasing of $X$. Also, we restrict to frameworks of the form $(T,\nu)$, where $\nu$ is an admissible probability measure on $\R^d$. 

First, we state a sufficient condition for the self-adjointness of such frameworks.
\begin{claim}\label{clm:GWSA}
    Let $T\sim\mathrm{GW}(X,Y)$ and $\nu$ be an admissible probability measure on $\R^d$. 
    If $\E[Y]<\infty$ then $(T,\nu)$ is almost surely self-adjoint.
\end{claim}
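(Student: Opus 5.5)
Since $\nu$ is admissible, it is supported on a bounded set $U$. Hence every $\p_{uv}$ is bounded, say $\|\p_{uv}\|^2\le \mathrm{diam}(U)^2$, for all edges. By Lemma~\ref{lem:tree_self_adjoint}, it therefore suffices to show the following: almost surely there is a constant $C$ such that $V(T)$ is covered by $C$-bounded finite subtrees containing $o$. We may take $C\ge \mathrm{diam}(U)^2$, so the requirement $\|\p_{uv}\|^2\le C$ in that lemma holds automatically. This reduces the claim to a purely combinatorial statement about the random tree $T$, independent of $\p$.

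The plan is to fix $C$ large and show that a.s. every vertex $x$ lies in a $C$-bounded subtree containing $o$. Since $T$ is locally finite, the path from $o$ to $x$ is finite, and the root and vertices near it have finite degrees. The main point is therefore to handle infinitely many vertices at once with a single $C$. The natural construction is the following. Call a vertex \emph{big} if it has more than $C$ children. Given $x$, start from the path $\pi$ from $o$ to $x$. Whenever a vertex $u$ of the current set has more than $C$ neighbors outside it, absorb its children into the set; then recursively repair any newly added big vertices in the same way. The resulting set is $C$-bounded, because every vertex either has at most $C$ children outside the set or has all of its children inside, and vertices on $\pi$ lose only the path edge. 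The set is finite provided the cluster of big vertices reachable from $\pi$ through big vertices is finite.

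To guarantee that these clusters are finite, I would choose $C$ depending on $X$ and $Y$. Excluding the root, each vertex is big independently with probability $\P(Y>C)$. Starting at a big vertex, the number of big children it has is at most its number of children, so the cluster of big descendants is a subtree of a branching process. Its mean offspring is $\E[Y\mathbf 1_{Y>C}]$-like, and more precisely each child of a big vertex is big with probability $\P(Y>C)$, independently of the parent's offspring count. The big sub-process therefore has mean offspring $\E[Y\mid Y>C]\cdot \P(Y>C)\le \E[Y\mathbf 1_{Y>C}]/\P(Y>C)\cdot\P(Y>C)=\E[Y\mathbf 1_{Y>C}]$. Since $\E[Y]<\infty$, this tends to $0$ as $C\to\infty$. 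Choosing $C$ with $\E[Y\mathbf 1_{Y>C}]<1$ makes the big-cluster process subcritical, so each such cluster is a.s. finite. There are only countably many vertices, hence a.s. all clusters of big vertices are finite.

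Finally, I apply the repair procedure from each finite path $\pi$; it terminates in a finite $C'$-bounded subtree, where $C'=\max(C,\deg(o))$ handles the root, whose offspring distribution is $X$ rather than $Y$. Thus the sets cover $V(T)$ and Lemma~\ref{lem:tree_self_adjoint} applies. I expect the main obstacle to be making the subcriticality computation rigorous. One subtlety is that the offspring counts of a big vertex's children are conditioned only on themselves, so the computation goes through cleanly; the other is confirming that absorbing all children of a big vertex adds only non-big children, which in turn need no repair, together with finitely many big ones that belong to the finite cluster.
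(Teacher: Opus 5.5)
Your proposal is correct and follows essentially the same route as the paper. Both arguments choose $C$ with $\E[Y\mathbf 1_{Y>C}]=\sum_{m>C}mY(m)<1$, observe that exploring through vertices with more than $C$ children is a subcritical Galton--Watson process (hence a.s.\ finite), and feed the resulting finite $C$-bounded subtrees containing $o$ into Lemma~\ref{lem:tree_self_adjoint}. The only cosmetic difference is that the paper builds its exhausting sets as the radius-$r$ ball together with these finite explorations from the ball's vertices, which also makes your separate treatment of the root via $\max(C,\deg(o))$ unnecessary, whereas you grow one set from each root-to-$x$ path.
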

Note that if $Y$ is the size-biasing of $X$ then $\E[Y]<\infty$ if and only if $\E[X^2]<\infty$.
\begin{proof}
There exists $C\in\N$ such that
\begin{equation}\label{eq:chooseC}
\sum_{m>C} m\,Y(m)<1.
\end{equation}
We claim that, almost surely, for every vertex $v$ there exists a finite subtree $F^v$ of $T$, rooted at $v$, such that every leaf of $F^v$
has at most $C$ children in $T$. Clearly, it suffices to prove this for every non-root vertex $v$.

Reveal the tree $T$ from $v$ outward, but whenever an explored vertex has at most $C$ children, declare it a leaf and do not explore its children.
Let $F^v$ be the subtree revealed by this process. Then $F^v$ is a Galton--Watson tree rooted at $v$ with offspring distribution $Y'$ given by
\[
Y'(m)=
\begin{cases}
\P(Y\le C), & m=0,\\
0, & 0<m\le C,\\
Y(m), & m>C,
\end{cases}
\]
with expected number of offspring equal to
\[
\E[Y']=\sum_{m>C} m\,Y(m)<1\,.
\]
Hence, $F^v$ is almost surely finite. Moreover, by construction, every leaf of $F^v$ has at most $C$ children in $T$.

Let $S_r\subseteq V(T)$ consist of $o$ and the union of $F^v$ over all vertices $v$ at distance at most $r$ from $o$.
Then $S_r$ induces a connected finite subtree, contains the root $o$, and it is $C$-bounded. In addition, $\bigcup_{r\ge 0} S_r=V(T)$.
Since $\nu$ has a bounded support, the claim follows from Lemma~\ref{lem:tree_self_adjoint}.
\end{proof}

The main result of this section solves the recursion in Theorem~\ref{thm:recursion} for Galton--Watson frameworks in which $Y$ is the size-biasing of $X$,
and the embeddings are sampled independently from an admissible distribution $\nu$.

\begin{theorem}\label{thm:Exp_loc_flex}
    Suppose that $X$ is a probability distribution on $\N_{\ge 0}$ with a bounded second moment, $Y$ the size-biasing of $X$, $T\sim \mathrm{GW}(X,Y)$ with root $o$, and let $\nu$ be a probability measure on $\R^d$ as above. Then, there exists a solution $p \in [0,1]$ to $p=\P(Y_p\ge d)$ such that
    \begin{equation}\label{eq:Xformula}
    \E[\phi_{T,\nu}(o)] = \E[\mathbf 1_{X_p<d}\cdot(d-X_{p})] - \frac12(1-p)^2\E[X]\,.
    \end{equation}
    
\end{theorem}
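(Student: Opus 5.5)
The plan is to identify $p$ as the probability that a non-root vertex has vanishing forward flexibility matrix. Then I would compute $\tr(Q_o)$ from the recursion of Theorem~\ref{thm:recursion}, and evaluate the resulting edge terms by a symmetry argument over the edges at the root. Self-adjointness is guaranteed by Claim~\ref{clm:GWSA}, since $\E[Y]<\infty$ follows from $\E[X^2]<\infty$. Note that $\phi_{T,\nu}(o)=\tr(P_{o,o})=\tr(Q_o)$, because $T^o=T$.

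\textbf{Step 1: the fixed point.} For a non-root vertex $v$, the subtree $T^v$ is $\mathrm{GW}(Y,Y)$, and the subtrees of distinct children are i.i.d. together with their embeddings. Set $p:=\P(Q_v=0)$. By Lemma~\ref{lem:Qo0}, $Q_v=0$ if and only if at least $d$ children $w$ of $v$ have $Q_w=0$. Conditioned on $\deg^+(v)$, the number of such children is binomial with parameter $p$. Hence $p=\P(Y_p\ge d)$. Applying the same reasoning at the root, the number $h:=|H|$ of root children with $Q_{v_i}=0$ is distributed as $X_p$, and the number of remaining children is $X_{1-p}$.

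\textbf{Step 2: trace formula.} If $h\ge d$, then $Q_o=0$ almost surely. Otherwise $\dim W=d-h$ almost surely, and $Q_o$ vanishes on $W^\perp$. Write $Q_o|_W=(\Id_W+\sum_{i\notin H}a_ia_i^*)^{-1}$ with $a_i=P^W\p_{ov_i}/(\p_{ov_i}^*Q_{v_i}\p_{ov_i})^{1/2}$. Using $\tr(I+A)^{-1}=\dim W-\tr\bigl(A(I+A)^{-1}\bigr)$ and $Q_oP^W=Q_o$, we get
\[
\tr Q_o=\mathbf 1_{h<d}\Bigl((d-h)-\sum_{i\notin H}t_{o,v_i}\Bigr),\qquad t_{o,v}:=\frac{\p_{ov}^*Q_o\p_{ov}}{\p_{ov}^*Q_v\p_{ov}} .
\]
It therefore remains to show that $\E\bigl[\sum_{i}t_{o,v_i}\mathbf 1_{Q_{v_i}\neq0}\bigr]=\tfrac12(1-p)^2\E[X]$. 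The indicator $\mathbf 1_{h<d}$ is automatic here, since $t_{o,v}=0$ when $Q_o=0$.

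\textbf{Step 3: edge symmetry.} Fix an edge $ov$ and let $A$ be the forward flexibility matrix of $o$ in the tree with the branch $T^v$ removed. Also set $B=Q_v$, $a=\p_{ov}^*A\p_{ov}$ and $b=\p_{ov}^*B\p_{ov}$. Applying Theorem~\ref{thm:recursion} to the root $o$ and splitting off the child $v$ gives $Q_o|_W=\bigl((A|_W)^{-1}+uu^*/b\bigr)^{-1}$ with $u=P^W\p_{ov}$, whenever $a,b\neq0$. Sherman--Morrison then yields $\p_{ov}^*Q_o\p_{ov}=ab/(a+b)$, so $t_{o,v}=a/(a+b)$. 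Re-rooting the tree at $v$ gives, by the same argument, $t_{v,o}=b/(a+b)$. Hence $t_{o,v}+t_{v,o}=1$ whenever $A\ne0\ne B$. If $B=0$, the edge is excluded from the sum. If $A=0$, then $Q_o=0$ and $t_{o,v}=0$, and the edge is likewise excluded from the side of $v$.

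\textbf{Step 4: mass transport.} Because $\mathrm{GW}(X,Y)$ with $Y$ the size-bias of $X$ is unimodular, the mass transport principle applies with $f(o,v)=t_{o,v}\mathbf 1_{A\neq0,B\neq0}$. It gives $\E\sum_{v\sim o}f(o,v)=\E\sum_{v\sim o}f(v,o)$. Each side therefore equals half of $\E\sum_{v\sim o}\mathbf 1_{A\neq0,B\neq0}$. Seen from a uniformly chosen edge at the root, the two sides are independent $\mathrm{GW}(Y,Y)$-type trees, by the size-biasing relation \eqref{eq:sizeBiasF}. So this last expectation is $\E[X](1-p)^2$, which completes the proof.

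The main obstacle is Step 3. One must verify the algebra carefully on the subspace $W$ in the degenerate cases: when $\p_{ov}$ lies in $\mathrm{span}\{\p_{ow}: w\in H\}$ (a null event), and when $o$ has exactly $d-1$ other zero children. One must also justify that re-rooting preserves the self-adjointness and independence used in Lemma~\ref{lem:Qo0}. If a direct unimodularity argument is awkward, I would instead write both expectations explicitly using Claim~\ref{clm:sizeBias_new_claim}(3) to match the joint law of $(A,B)$ seen from either endpoint.
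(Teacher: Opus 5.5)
Your proof is correct. Its first half coincides with the paper's computation: the fixed point, the trace formula $\tr Q_o=\mathbf 1_{h<d}(d-h)-\sum_{i\notin H}t_{o,v_i}$, and the Sherman--Morrison evaluation $t_{o,v}=a/(a+b)$. Your branch-deleted flexibility $A$ is exactly the paper's $\tilde Q$.

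The two routes diverge in how the factor $\tfrac12$ is obtained.

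\textbf{The paper's route.} It size-biases over the nonzero children via \eqref{eq:sizeBiasF} and Claim~\ref{clm:sizeBias_new_claim}(1). It then uses Claim~\ref{clm:sizeBias_new_claim}(3) to show that, seen from the selected edge, the pairs $(\p(o),\tilde Q)$ and $(\p(u),Q_u)$ are independent and both $\cD^Y$-distributed. Hence $a/(a+b)$ has mean $\tfrac12$ by exchangeability.

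\textbf{Your route.} You prove the pathwise identity $t_{o,v}+t_{v,o}=1$ by re-rooting at $v$. After re-rooting, $P_{v,v}$ is a forward flexibility matrix, and $o$ is a child carrying $A$. You then invoke the mass transport principle for the unimodular marked tree.

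\textbf{Comparison.} Your route avoids the conditional law $\cD^Y$, and with it the separate treatment of $p=1$ that the paper needs. It also yields more. The same argument gives
\[
\E[\phi(o)]=\E[(d-h)^+]-\tfrac12\,\E\,\#\{v\sim o:\ A\neq 0\neq B\}
\]
for any a.s.\ self-adjoint unimodular random tree with $\E\deg(o)<\infty$ and i.i.d.\ admissible marks, not just for $\mathrm{GW}(X,Y)$. The price is reliance on the general mass transport principle for marked unimodular networks. It also needs a routine but unstated check that the flexibility matrices are measurable functions of the marked doubly-rooted tree. The paper's argument is fully self-contained, using only the explicit independence structure of $\mathrm{GW}(X,Y)$ and Claim~\ref{clm:sizeBias_new_claim}.

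Finally, the degenerate cases you flag as the main obstacle are not real difficulties. The vector $P^W\p_{ov}$ vanishes only on a null event, and $\dim W\ge 1$ whenever $h<d$.
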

\begin{proof}
Let $(T',u)$ denote a $\mathrm{GW}(Y,Y)$-distributed tree rooted at $u$. Then, $(T',\nu)$ is almost surely self-adjoint by Claim \ref{clm:GWSA}. 
Let $p_Y=\P(Q_u=0)$, where $Q_u$ denotes the forward flexibility matrix of $u$ in $T'$. 
We claim that the number of children $v$ of $u$ in $T'$ for which $Q_v=0$ is $Y_{p_Y}$-distributed. 
Indeed, the total number of children $v$ of $u$ is $Y$-distributed, and the events $Q_v=0$ are independent, each with probability $p_Y$, since the subtree rooted at $v$ is also $\mathrm{GW}(Y,Y)$-distributed.
By Lemma \ref{lem:Qo0} we find that 
\begin{equation}\label{eq:px-py-bin-recursion}
p_Y=\P\!\left(Y_{p_Y}\ge d\right)\,.
\end{equation}

If \(p_Y=1\), then \eqref{eq:px-py-bin-recursion} gives
\(\mathbb P(Y\ge d)=1\), and hence
    $\mathbb P(1\le X\le d)=0$.
Substituting \(p=1\) into the right-hand side of
\eqref{eq:Xformula} therefore gives \(dX(0)\).
On the other hand, the root \(o\) of \(T\) is isolated with probability
\(X(0)\). In this case its local flexibility is \(d\). Otherwise, \(o\)
has at least \(d+1\) children, and all their forward flexibility matrices
vanish. Hence, the local flexibility at the root is \(0\) by
Lemma~\ref{lem:Qo0}. Thus the expected local flexibility is \(dX(0)\),
so \eqref{eq:Xformula} holds also in this case.

So we assume $p_Y<1$. The following distribution plays a key role in the proof: sample a rooted $\mathrm{GW}(Y,Y)$-tree $(T',u)$ and an embedding $\p:V(T')\to\R^d$ by drawing i.i.d.\ samples from $\nu$. Let $\cD^Y$ denote the law of the pair $(\p(u),Q_u)$ under this procedure, conditioned on the event $Q_u\neq 0$.
By Lemma~\ref{lem:Qo0}, the marginal law of $\p(u)$ under $\cD^Y$ is $\nu$, since $\p(u)$ is independent of the event $Q_u=0$. 
However, the matrix $Q_u$ itself may depend on $\p(u)$.

\medskip

We now turn our attention to $(T,o)$. Note that $(T,\nu)$ is almost surely self-adjoint by Claim \ref{clm:GWSA}. 
By \eqref{eq:QvpuvQv0}, the subset $H$ from Theorem \ref{thm:recursion} is almost surely equal to the subset of the children $v$ of the root $o$ for which $Q_v=0$. We claim that $|H|$ is $X_{p_Y}$-distributed. Indeed, the total number of children $v$ of $o$ is $X$-distributed, and, as in $T'$, the events $Q_v=0$ are independent of probability $p_Y$. 

Let $(\p(u_i),Q_{u_i})_{i\ge 1}$ be a sequence of i.i.d.\ samples from $\cD^Y$, and $\p(o),\p(v_i),~i=1,\ldots,d-1$ independently drawn from $\nu$. Via Theorem \ref{thm:recursion}, we can sample a matrix $Q$ with the same distribution as the forward flexibility matrix $Q_o$ of $o$ in $(T,\nu)$
 using these random vectors and matrices: 
Indeed, start by sampling \((h,s)\sim (X_{p_Y},X_{1-p_Y})\).
Viewing \(h\) as the size of \(|H|\), we set \(Q=0\) if \(h\ge d\).
Otherwise, view
\begin{enumerate}
    \item[(i)] \(v_1,\ldots,v_h\) as the children of the root \(o\)
    for which \(Q_{v_i}=0\) (note that \(\p(v_i)\) is
    \(\nu\)-distributed even under the conditioning \(Q_{v_i}=0\)),
    \item[(ii)] \(u_1,\ldots,u_s\) as the remaining children of the root,
    and
    \item[(iii)] \(Q_{u_i}\) as the forward flexibility matrix of \(u_i\)
    in \(T\) (matching the definition of \(\cD^Y\)).
\end{enumerate}

In this realization, $W$ is the orthogonal complement of
\[
\mathrm{span}\{\p_{ov_i}: 1\le i\le h\},
\]
and has dimension $d-h$. Set 
\[
A_i=\frac{P^W\p_{ou_i}(P^W\p_{ou_i})^*}{\p_{ou_i}^*Q_{u_i}\p_{ou_i}}\,,\qquad i\ge 1\,,
\]
which is a $(d-h)\times(d-h)$ matrix acting on $W$. Theorem~\ref{thm:recursion} implies that the random $d\times d$ matrix $Q$ defined by $Q|_{W^\perp}=0$ and
\[
Q|_W=\Bigl(\Id_W+\sum_{i=1}^{s}A_i\Bigr)^{-1}
\]
has the distribution of the forward flexibility matrix $Q_o$ of the root $o$ in $T$.
Therefore,

\begin{align}
\notag 
\E[\tr\,Q_o] 
&=~ \E\left[ \mathbf 1_{h<d}\cdot \tr\left(\left(\Id_W + \sum_{i=1}^{s}A_i\right)^{-1}\right)\right] \\
\notag &=~ \E\left[\mathbf 1_{h<d}\cdot \tr\left(
\Id_W-\sum_{i=1}^{s}A_i \left(\Id_W + \sum_{i'\in[s]\setminus\{i\}}A_{i'} +A_i\right)^{-1}\right)\right] \\
\label{eq:first_stop} &=~ \E[\mathbf 1_{h<d}\cdot (d-h)] - \E_{s\sim X_{1-p_Y}}\left[ s\cdot f(s) \right]\,,
\end{align}
where $f:\N\to \R$ is defined by
\[
f(s) = 
\E\left[\mathbf{1}_{h<d}\cdot 
\tr\left(A_s \left(\Id_W + \sum_{i=1}^{s-1}A_i+A_s\right)^{-1}\right)
~~\middle |~~s\right]\,.
\]
In the align, the transition to the second line is by the equality $(I+S)^{-1}=I-S(I+S)^{-1}$ which holds if $I+S$ is invertible, and the next transition is by linearity, $\dim W=d-h$, and the fact that the $A_i$'s have the same distribution. 

By \eqref{eq:sizeBiasF}, the equality $\E[X_{1-p_Y}]=(1-p_Y)\E[X]$, and the first item in Claim \ref{clm:sizeBias_new_claim}, we reformulate \eqref{eq:first_stop} to
\begin{equation}\label{eq:ft}
\E[\tr\,Q_o]=\E[\mathbf 1_{X_{p_Y}<d}\cdot (d-X_{p_Y})]- 
(1-p_Y)\E[X]\cdot\E_{t\sim Y_{1-p_Y}}\left[f(t+1) \right]\,.    
\end{equation}

We turn to study the expectation of $f(t+1)$, where $t\sim Y_{1-p_Y}$. 
By definition, when evaluating $f(t+1)$ we draw
$h\sim X_{p_Y}\mid \{X_{1-p_Y}=t+1\}$. Therefore, by the third item of
Claim~\ref{clm:sizeBias_new_claim}, the joint distribution of $(h,t)$ is
$(Y_{p_Y},Y_{1-p_Y})$. Once $h$ is sampled, both $W$ and the matrices $A_i$ are defined via the randomness $(\p(u_i),Q_{u_i})_{i\ge 1}$ together with the independent samples $\p(o)$ and $\p(v_i)_{i<d}$. We denote by $\E'$ the expectation taken over $(h,t)\sim(Y_{p_Y},Y_{1-p_Y})$ and all this auxiliary randomness, and obtain
\begin{equation}\label{eq:Exp_of_ft}
    \E_{t\sim Y_{1-p_Y}}\!\left[f(t+1) \right]
    = \E'\!\left[ 
        \mathbf{1}_{h<d}\cdot 
\tr\!\left(A_{t+1} \left(\Id_W + \sum_{i=1}^{t}A_i+A_{t+1}\right)^{-1}\right)
    \right].
\end{equation}

Condition on the value of $h<d$. In such a case, we let $B_t$ be the matrix of order $d-h$ acting on $W$ defined by 
$$B_t=\Id_W + \sum_{j=1}^{t}A_j\,.$$ Also, denote $u=u_{t+1}$, $Q_u=Q_{u_{t+1}}$ for brevity. By the Sherman--Morrison formula and the definition of $A_{t+1}$, we find that
\[
A_{t+1}(B_t+A_{t+1})^{-1} = \frac{A_{t+1}B_t^{-1}}{1+\frac{(P^W\p_{ou})^*B_t^{-1}(P^W\p_{ou})}{\p_{ou}^*Q_{u}\p_{ou}}}\,.
\]
Therefore, the cyclic property of the trace and a simple manipulation give
\begin{align}
\tr\!\left[A_{t+1}(B_t+A_{t+1})^{-1}\right]
&= \frac{(P^W\p_{ou})^*B_t^{-1}(P^W\p_{ou})}{\p_{ou}^*Q_{u}\p_{ou}+(P^W\p_{ou})^*B_t^{-1}(P^W\p_{ou})} \nonumber\\[0.6em]
&= \frac{\p_{ou}^*\tilde Q\,\p_{ou}}{\p_{ou}^*Q_{u}\p_{ou}+\p_{ou}^*\tilde Q\,\p_{ou}}\,, \label{eq:trace_A_over_BplusA}
\end{align}
where $\tilde Q$ denotes a $d\times d$ matrix defined by $\tilde Q|_{W^\perp}=0$ and $\tilde Q|_W = B_t^{-1}$.

By combining \eqref{eq:Exp_of_ft} and \eqref{eq:trace_A_over_BplusA} we find
\begin{equation}\label{eq:almost_there}
\E_{t\sim Y_{1-p_Y}}[f(t+1)]=\E'(\mathbf 1_{h<d})\E'\left[ \frac{\p_{ou}^*\tilde Q\,\p_{ou}}{\p_{ou}^*Q_{u}\p_{ou}+\p_{ou}^*\tilde Q\,\p_{ou}} ~~\middle |~~ h<d \right]\,.  
\end{equation}

Crucially, observe that, by combining Theorem \ref{thm:recursion} and the definition of $B_t$, we can view $\tilde Q$ as a forward flexibility matrix. More precisely, since $(h,t)\sim (Y_{p_Y},Y_{1-p_Y})$, we derive that, conditioned on $h<d$, the pair $(\p(o),\tilde Q)$ is $\cD^Y$-distributed.
In addition, it is independent of the $\cD^Y$-distributed pair $(\p(u),Q_u)$ (which by definition denotes the pair $(\p(u_{t+1}),Q_{u_{t+1}})$).
Therefore, conditioned on $h<d$, the pair of positive random variables
$\p_{ou}^*Q_{u}\p_{ou}$ and 
$\p_{ou}^*\tilde Q\,\p_{ou}$
are exchangeable: indeed, the pairs \((\p(u),Q_u)\) and \((\p(o),\tilde Q)\) are independent and $\cD^Y$-distributed, and exchanging them only replaces \(\p_{ou}\) by \(-\p_{ou}\). Hence,
\[
\E'\left[ \frac{\p_{ou}^*\tilde Q\,\p_{ou}}{\p_{ou}^*Q_{u}\p_{ou}+\p_{ou}^*\tilde Q\,\p_{ou}} ~~\middle |~~ h<d \right]=\frac{1}{2}\,.
\]
Combined with $\E'[\mathbf 1_{h<d}]=1-p_Y$ which follows from \eqref{eq:px-py-bin-recursion}, we find that \eqref{eq:almost_there} reformulates to
\[
\E_{t\sim Y_{1-p_Y}}[f(t+1)]=\frac{1-p_Y}{2}\,.
\]
Substituting this equality into \eqref{eq:ft} yields
\begin{align*}
    \E[\tr\,Q_o]
    &=~
\E[\mathbf 1_{X_{p_Y}<d}\cdot(d-X_{p_Y})] - \frac12(1-p_Y)^2\E[X]\,,
\end{align*}
and together with~\eqref{eq:px-py-bin-recursion}, the theorem follows.

\end{proof}

\section{Applications}\label{sec:applications}
We now apply Theorem \ref{thm:Exp_loc_flex} to several families of both random and non-random graphs. 
\subsection{Random graphs with a given degree distribution}\label{sec:X}
In this subsection we describe a fairly broad family of graph sequences for which the trivial upper bound \eqref{eq:core-rank-upper-bound} for the \(d\)-rigidity rank is attained asymptotically. 
This general treatment may have further applications, and may also shed light on the connection between the rigidity rank and the \((d+1)\)-core in random graphs. We stress that some of the assumptions below are likely not optimal. However, since weakening them is not needed for the main theorems of the paper, we have not pursued this further.

Let $X$ be a probability distribution on $\N_{\ge 0}$ and $Y$ its size-biasing. 
Assume that $X$ has finite first and second moments. Denote by
\[
\phi(p)=\E\big[\mathbf 1_{X_p<d}\,(d-X_p)\big]-\frac12(1-p)^2\E[X]
\]
the right-hand side of \eqref{eq:Xformula}. In addition, let $\hat p=\hat p(X)$ be the largest solution in $[0,1]$ for \eqref{eq:px-py-bin-recursion}. We note that in Section~\ref{sec:intro}, the notation \(\hat p\) was used
for this same quantity in the special case \(X=\operatorname{Poi}(c)\). The following condition on \(X\) will be used below.

\begin{condition}\label{ass:max_phi}
    $\phi(p)$ attains its maximum in $[0,1]$ at either $p=0$ or $p=\hat p$.
\end{condition}

In the next subsections we show that the distributions $X$ relevant to our main theorems ($X=\operatorname{Poi}(c)$ and $X\equiv k$) satisfy Condition \ref{ass:max_phi} using the following claim.

\begin{claim}\label{clm:max_phi}
    If $\phi''(p)$ is unimodal, i.e., there exists $p_*\in[0,1]$ such that $\phi''$ is increasing on $(0,p_*]$
and decreasing on $[p_*,1)$, then Condition \ref{ass:max_phi} holds.
\end{claim}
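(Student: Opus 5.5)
The plan is to compute $\phi'$ explicitly, show that its zeros are exactly the solutions of $p=\P(Y_p\ge d)$, and then use unimodality of $\phi''$ to rule out an interior maximum.

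\textbf{Step 1: a formula for $\phi'$.} Write $g(p)=\E[\mathbf 1_{X_p<d}(d-X_p)]=\E[F(X_p)]$ with $F(k)=(d-k)_+$. Conditionally on $X=n$, the standard thinning identity gives
\[
\tfrac{d}{dp}\E[F(\Bin(n,p))]=n\,\E\bigl[F(\Bin(n-1,p)+1)-F(\Bin(n-1,p))\bigr]=-n\,\P(\Bin(n-1,p)<d),
\]
since $F(k+1)-F(k)=-\mathbf 1_{k<d}$. Differentiation under the expectation is justified by $\E[X]<\infty$. Taking expectation over $X$ and applying \eqref{eq:sizeBiasF} with $f(k)=\P(\Bin(k,p)<d)$, exactly as in the proof of Claim~\ref{clm:sizeBias_new_claim}, gives $g'(p)=-\E[X]\,\P(Y_p<d)$. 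Adding the derivative $(1-p)\E[X]$ of the quadratic term yields
\[
\phi'(p)=\E[X]\,\psi(p),\qquad \psi(p):=\P(Y_p\ge d)-p .
\]
Hence the critical points of $\phi$ are exactly the solutions of \eqref{eq:px-py-bin-recursion}, and $\phi''=\E[X]\psi'$. We also record the boundary values: $\psi(0)=0$ because $Y_0=0$ and $d\ge1$, and $\psi(1)=\P(Y\ge d)-1\le 0$.

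\textbf{Step 2: reduce to $[0,\hat p]$.} By definition $\hat p$ is the largest zero of $\psi$ in $[0,1]$. If $\hat p<1$, then $\psi$ has no zero in $(\hat p,1]$. Since $\psi(1)\le 0$, continuity gives $\psi<0$ on $(\hat p,1)$, so $\phi$ is decreasing there. Therefore $\max_{[0,1]}\phi=\max_{[0,\hat p]}\phi$. If $\hat p=1$ there is nothing to reduce.

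\textbf{Step 3: no interior maximum.} Suppose, for contradiction, that $\phi$ attains its maximum over $[0,\hat p]$ at some interior $q$ with $\phi(q)>\max(\phi(0),\phi(\hat p))$.
\begin{enumerate}
\item Since $\phi(q)>\phi(0)$ and $\phi'=\E[X]\psi$, there is $a\in(0,q)$ with $\psi(a)>0$.
\item Since $\phi(q)>\phi(\hat p)$, there is $b\in(q,\hat p)$ with $\psi(b)<0$.
\end{enumerate}
Because $\psi(0)=0<\psi(a)$, $\psi(a)>0>\psi(b)$ and $\psi(b)<0=\psi(\hat p)$, the mean value theorem gives points $x_1<x_2<x_3$ in $(0,\hat p)$ with $\psi'(x_1)>0$, $\psi'(x_2)<0$ and $\psi'(x_3)>0$.

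Unimodality of $\phi''=\E[X]\psi'$ excludes this. The superlevel set $\{\psi'>0\}$ of a function that increases and then decreases is an interval. It would contain $x_1$ and $x_3$ but not $x_2$, which is impossible. Hence the maximum over $[0,\hat p]$ is attained at $0$ or at $\hat p$, and together with Step 2 this gives Condition~\ref{ass:max_phi}.

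\textbf{Main obstacle.} The only delicate point is the derivative computation in Step 1 and its regularity. One must justify differentiating $\E[F(X_p)]$ termwise, which uses finite moments of $X$, and check that $\psi$ is $C^1$ on $(0,1)$ and continuous on $[0,1]$, so that the mean value theorem applies. Everything after that is elementary sign analysis.
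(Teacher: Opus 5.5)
Your proof is correct, but it reaches the conclusion by a different final argument than the paper. The starting point is the same: the identity $\phi'(p)=\E[X]\bigl(\P(Y_p\ge d)-p\bigr)$, together with $\phi'(0)=0$ and $\phi'(1)\le 0$. You actually derive this identity via the thinning identity and \eqref{eq:sizeBiasF}, where the paper only calls it a straightforward calculation.

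The paper uses $\phi''(0)=-\E[X]<0$, which is where $d\ge 2$ enters. From this and unimodality it concludes that $\phi'$ decreases, then increases, then decreases, so it has at most three zeros. It then finds the maximizer by a case analysis on the number of zeros and the resulting sign pattern of $\phi'$.

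You instead first discard $(\hat p,1]$, where $\phi'<0$. You then show by contradiction that a strict interior maximum on $[0,\hat p]$ would force $\phi''$ to take the signs $+,-,+$ at three increasing points. This is impossible, because $\{\phi''>0\}$ is an interval when $\phi''$ is unimodal, even with only non-strict monotonicity.

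Your route buys robustness. It never uses $\phi''(0)<0$, so it does not need $d\ge 2$. It also never needs the zeros of $\phi'$ to be finitely many or to be sign-changing, whereas the paper's case analysis tacitly assumes this. The paper's route, in exchange, gives more structure: $\phi$ has at most three critical points, and $\phi'$ has sign pattern $(-,+,-)$ in the interesting case.
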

\begin{proof}
Straightforward calculations show that
\[
\phi'(p)\;=\;\E[X]\cdot\Big(\P\big(Y_p\ge d\big)-p\Big)\,,
\]
and
\begin{equation}\label{eq:phi''}
\phi''(p)
=
\E[X]\cdot\left(p^{-1}d\cdot \P(Y_p=d)-1\right)\,,
\end{equation}
for $0<p\le 1$.

     Note that since $d\ge 2$, we have $\phi''(0)=-\E[X]<0$, $\phi'(0)=0$ and $\phi'(1)=\E[X]\big(\P(Y\ge d)-1\big)\le 0$. Since $\phi''$ is first non-decreasing and then non-increasing, and since
$\phi''(0)<0$, the derivative $\phi'$ can have at most one local minimum
and at most one local maximum. Hence $\phi'=0$ has at most three solutions. If there is a unique solution $\hat p =0$ then $\phi'$ is negative in $(0,1]$ whence $\phi$ is maximized at $0$. If there are two solutions: $0$ and $\hat p$, then $\phi'$ is negative in $(0,\hat p)$ and in $(\hat p,1]$ whence $\phi$ is maximized at $0$. If there are three solutions: $0,p_1$ and $\hat p$, then the sign pattern of $\phi'$ is $(-,+,-)$ on the intervals $(0,p_1)$, $(p_1,\hat p)$, $(\hat p,1]$, whence $\phi$ is maximized either at $0$ or at $\hat p$.
\end{proof}

It turns out that, in many cases, deciding whether \(p=0\) or \(p=\hat p\) maximizes
\(\phi\) is precisely the question of whether the \((d+1)\)-core of a corresponding random graph model has edge density smaller or greater than \(d\). In such cases, maximizing \(\phi\) is the same as optimizing the
right-hand side of \eqref{eq:core-rank-upper-bound}.

\begin{figure}[h!]
    \centering
    \includegraphics[width=\textwidth]{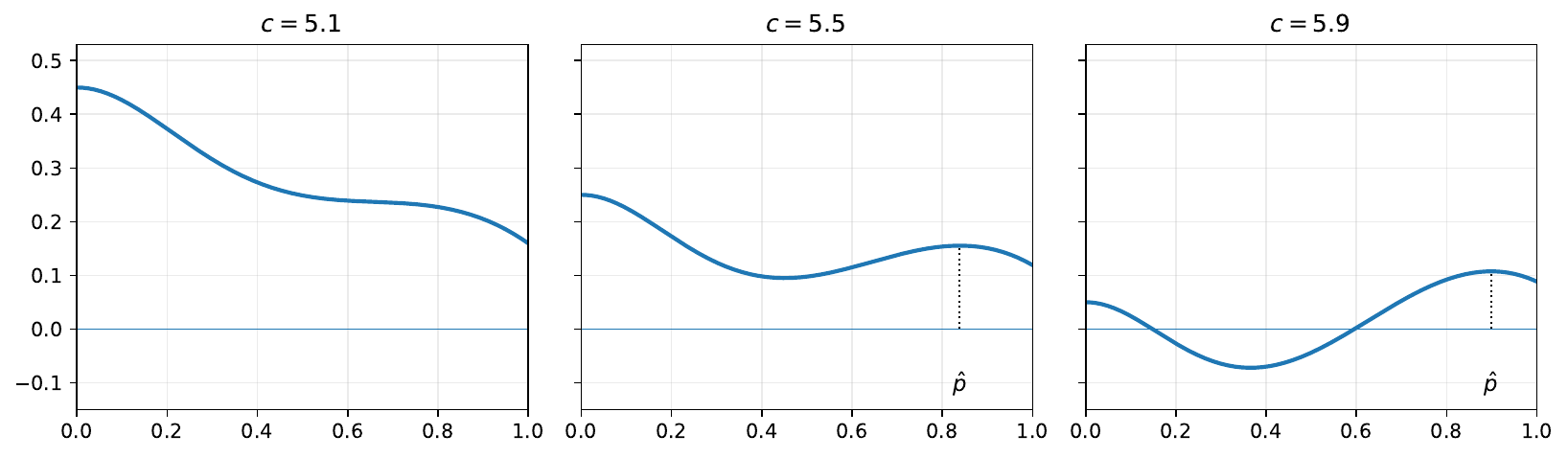}
    \caption{
    The function $\phi(p)$ for $d=3$ and $X=\operatorname{Poi}(c)$, where
    \(c=5.1,5.5,5.9\). On the left, \(c<\gamma_3\), and \(p=0\) is the only
    solution of \(\phi'(p)=0\). In the middle, \(\gamma_3<c<c_3\), two
    additional critical points appear, but the value at \(p=0\) is still larger
    than the value at \(\hat p\). On the right, \(c>c_3\), and the maximum is
    attained at \(\hat p\).
    }
    \label{fig:phi-d3}
\end{figure}

\begin{definition}\label{def:Xtyp}
    A (possibly random) graph sequence $G_n$ with $|V(G_n)|\to\infty$ is called $(X,d)$-typical if the following conditions hold.

\begin{itemize}
    \item The local weak limit of $G_n$ is the $\mathrm{GW}(X,Y)$-distributed rooted tree.
    \item The number $m$ of edges in $G_n$ satisfies that a.a.s. $m=\frac12\E[X]n+o(n).$
    \item Let $n_{d+1},~m_{d+1}$ denote the number of vertices and edges in the $(d+1)$-core of $G_n$. Then, a.a.s.,
    \begin{equation}\label{eq:JLconditions}
        {n_{d+1}}=\P(X_{\hat p}\ge d+1)n+o(n)\,,\,\,
    m_{d+1}=
    \frac{\E[X]\hat p^2}2n+o(n)\,.
    \end{equation}
\end{itemize}
\end{definition}

For example, every essentially high-girth family of \(k\)-regular graphs is
\((k,d)\)-typical. In addition, \(G(n,c/n)\) is
\((\operatorname{Poi}(c),d)\)-typical for every \(c\ne\gamma_d\)
by~\cite{PittelSpencerWormaldCore} (their proof does not apply at
criticality).
More generally, the parameters in \eqref{eq:JLconditions} appear in the work of Janson and Luczak on the $k$-core of random graphs \cite{JansonSimplekcore}. Let $(d_i)_{i\in [n]}$ be non-negative integers with even sum such that $\sum_i\mathbf 1_{d_i=s}=X(s)n+o(n)$ as $n\to\infty$ for every fixed $s$. Let $G_n$ be the configuration-model random graph with degree sequence $(d_i)$: attach $d_i$ half-edges to vertex $i$, and pair all half-edges uniformly at random. It is proved in \cite{JansonSimplekcore} that, under some technical assumptions, $G_n$ is $(X,d)$-typical. 

The next claim relates the two candidate values for maximizing $\phi$ with the parameters of the $(d+1)$-core from \eqref{eq:JLconditions}.
\begin{claim}\label{clm:thisclm}
Let $G_n$ be an $n$-vertex $(X,d)$-typical (possibly random) graph. Then, a.a.s.,
    \begin{enumerate}
    \item \(dn-m=\phi(0)\cdot n+o(n)\), and
    \item \(dn-m-dn_{d+1}+m_{d+1}= \phi(\hat p)\cdot n+o(n)\).
\end{enumerate}
\end{claim}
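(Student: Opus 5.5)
This is a direct computation. We substitute the values $p=0$ and $p=\hat p$ into the definition of $\phi$ and compare with the a.a.s. asymptotics in Definition~\ref{def:Xtyp}.

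\emph{First item.} At $p=0$ we have $X_0\equiv 0<d$, so
\[
\phi(0)=d-\tfrac12\E[X].
\]
By typicality, a.a.s.\ $m=\frac12\E[X]n+o(n)$. Hence $dn-m=\phi(0)n+o(n)$, which is the first item.

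\emph{Second item.} By the typicality assumptions, a.a.s.
\[
dn-m-dn_{d+1}+m_{d+1}=\Big(d-\tfrac12\E[X]-d\,\P(X_{\hat p}\ge d+1)+\tfrac12\E[X]\hat p^2\Big)n+o(n).
\]
It therefore suffices to prove the exact identity
\[
\phi(\hat p)=d\,\P(X_{\hat p}\le d)-\tfrac12\E[X](1-\hat p^2).
\]

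Expand $\phi(\hat p)$. Write $\E[\mathbf 1_{X_{\hat p}<d}(d-X_{\hat p})]=d\,\P(X_{\hat p}<d)-\E[X_{\hat p}\mathbf 1_{X_{\hat p}<d}]$. Comparing with the target, the identity reduces to showing
\[
-\E[X_{\hat p}\mathbf 1_{X_{\hat p}<d}]-\tfrac12(1-\hat p)^2\E[X]=d\,\P(X_{\hat p}=d)-\tfrac12\E[X](1-\hat p^2).
\]
Since $(1-\hat p^2)-(1-\hat p)^2=2\hat p(1-\hat p)$, this becomes
\[
\E[X_{\hat p}\mathbf 1_{X_{\hat p}\le d}]=\hat p(1-\hat p)\E[X].
\]

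To prove this last equality, apply the size-biasing relation \eqref{eq:sizeBiasF} to the pair $(X_{\hat p},Y_{\hat p})$; this is legitimate by the first item of Claim~\ref{clm:sizeBias_new_claim}, and $\E[X_{\hat p}]=\hat p\E[X]$. Taking $f(k)=\mathbf 1_{k\le d-1}$ gives
\[
\E[X_{\hat p}\mathbf 1_{X_{\hat p}-1\le d-1}]=\hat p\E[X]\,\P(Y_{\hat p}<d).
\]
The fixed-point equation \eqref{eq:px-py-bin-recursion} at $\hat p$ gives $\P(Y_{\hat p}<d)=1-\hat p$, so the right-hand side equals $\hat p(1-\hat p)\E[X]$, as required.

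The only points that need care are the index bookkeeping ($<d$ versus $\le d$, with the boundary term $d\,\P(X_{\hat p}=d)$ absorbed into the $\le d$ range) and the degenerate case $\hat p=0$. When $\hat p=0$, both sides of the second item reduce to the first item, since then $n_{d+1}=o(n)$ and $m_{d+1}=o(n)$, so no separate argument is needed. There is no genuine obstacle; the computation is the whole proof.
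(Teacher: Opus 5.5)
Your proof is correct and follows essentially the same argument as the paper: you evaluate $\phi$ at $0$ and at $\hat p$, then close the second item using the size-biasing relation for $(X_{\hat p},Y_{\hat p})$ (Claim~\ref{clm:sizeBias_new_claim}(1) with \eqref{eq:sizeBiasF}) together with the fixed-point equation, and you treat $\hat p=0$ separately as the paper does. The only cosmetic difference is that you prove $\E[X_{\hat p}\mathbf 1_{X_{\hat p}\le d}]=\hat p(1-\hat p)\E[X]$ via $\P(Y_{\hat p}<d)=1-\hat p$, while the paper uses the complementary form $\E[X_{\hat p}\mathbf 1_{X_{\hat p}\ge d+1}]=\hat p^2\E[X]$ via $\P(Y_{\hat p}\ge d)=\hat p$.
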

In particular, $\phi(\hat p)>\phi(0)$ if and only if the asymptotic edge density in the $(d+1)$-core of $G_n$ is greater than $d$.
\begin{proof}
The proof follows a straightforward analysis. Note that by local weak convergence, 
\begin{enumerate}
    \item Since $X_p=0$ if $p=0$, we have that $\phi(0) = d-\E[X]/2$. Hence, the claim follows from the second item in Definition \ref{def:Xtyp}.
    
    \item If $\hat p=0$, then $n_{d+1},m_{d+1}=o(n)$ and the claim follows directly from the first item of this claim. Otherwise, note that $\mathbf 1_{X_p<d}(d-X_p)=d-\min(X_p,d)$, so we derive
    \[
    \phi(\hat p)=d-\E[\mathbf{1}_{X_{\hat p}\le d}X_{\hat p}]-d\cdot\P(X_{\hat p}\ge d+1)-\frac{\E[X]}{2}+\hat p\E[X]-\frac{\hat p^2\E[X]}{2}.
    \]

    We have $\hat p\E[X]=\E[X_{\hat p}]$, hence
    \[
    -\E[\mathbf{1}_{X_{\hat p}\le d}X_{\hat p}]+\hat p\E[X] = 
    \E[\mathbf{1}_{X_{\hat p}\ge d+1}X_{\hat p}]\,.
    \]
    In addition, using the fact that $Y_{\hat p}$ is the size-biasing of $X_{\hat p}$ and \eqref{eq:sizeBiasF}, we find that 
    \begin{equation}\label{eq:simple_hatp}
    \E[\mathbf{1}_{X_{\hat p}\ge d+1}X_{\hat p}]=\E[X_{\hat p}]\P(Y_{\hat p}\ge d) = \hat p^2\E[X]\,,    
    \end{equation}
    where the last equality follows from the fact that $\hat p$ is a solution of \eqref{eq:px-py-bin-recursion}. By combining these equalities we find that
    \[
    \phi(\hat p) = d - \frac{\E[X]}2-d\cdot\P(X_{\hat p}\ge d+1)+\frac{\hat p^2\E[X]}{2}\,,
    \]
    and the claim follows.
\end{enumerate}
\end{proof}

The main theorem of this subsection shows that under the assumptions above, \eqref{eq:core-rank-upper-bound} is asymptotically attained.
\begin{theorem}\label{thm:rd_degreeX}
Suppose that $X$ satisfies Condition \ref{ass:max_phi}, and 
let $G_n$ be an $n$-vertex $(X,d)$-typical (possibly random) graph. Then, a.a.s.,
\[ 
\mathrm{rk}_d(G_n)=
    m+\min\left(0,\,dn_{d+1}-m_{d+1}\right) + o(n)\,.
\]
\end{theorem}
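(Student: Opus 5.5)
The upper bound is the deterministic inequality \eqref{eq:core-rank-upper-bound}, which holds for every graph, so only the matching lower bound needs work. Equivalently, we will show that a.a.s.
\[
\dim Z(G_n,\p)=dn-\mathrm{rk}_d(G_n)\le \max\bigl(dn-m,\;dn-m-dn_{d+1}+m_{d+1}\bigr)+o(n).
\]
By Claim~\ref{clm:thisclm}, the right-hand side equals $\max(\phi(0),\phi(\hat p))\,n+o(n)$ a.a.s. Condition~\ref{ass:max_phi} then identifies this with $n\cdot\max_{p\in[0,1]}\phi(p)+o(n)$.

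The plan is to bound the kernel dimension through local flexibility. Fix an admissible $\nu$, for instance uniform on $[0,1]^d$, and draw $\p$ i.i.d.\ from $\nu$. Almost surely such $\p$ is generic, so $\dim Z(G_n,\p)=dn-\mathrm{rk}_d(G_n)$. By \eqref{eq:dimZ_local}, and taking expectation over a uniform root $o_n$,
\[
\E\bigl[\dim Z(G_n,\p)\bigr]/n=\E[\phi_{G_n,\nu}(o_n)].
\]
The finite frameworks are self-adjoint. The limiting framework $(T,\nu)$ with $T\sim\mathrm{GW}(X,Y)$ is a.s.\ self-adjoint by Claim~\ref{clm:GWSA}, since $\E[Y]<\infty$ follows from $\E[X^2]<\infty$. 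Lemma~\ref{lem:upper_semi} therefore gives $\limsup_n\E[\phi_{G_n,\nu}(o_n)]\le\E[\phi_{T,\nu}(o)]$. By Theorem~\ref{thm:Exp_loc_flex}, $\E[\phi_{T,\nu}(o)]=\phi(p)$ for some fixed point $p$, and this is at most $\max_{[0,1]}\phi$. Combining these steps,
\[
\limsup_{n\to\infty}\frac1n\,\E\bigl[dn-\mathrm{rk}_d(G_n)\bigr]\le\max(\phi(0),\phi(\hat p)).
\]

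The main obstacle is upgrading this bound in expectation to an a.a.s.\ statement, and handling the case where $G_n$ is itself random. I would argue as follows. Let $U_n=dn-\mathrm{rk}_d(G_n)-n\max(\phi(0),\phi(\hat p))$. The deterministic upper bound together with Claim~\ref{clm:thisclm} gives $U_n\ge -o(n)$ a.a.s. Moreover $|U_n|/n$ is uniformly bounded, since $0\le dn-\mathrm{rk}_d\le dn$. The expectation bound gives $\limsup\E[U_n]/n\le 0$.

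These three facts together force $U_n/n\to0$ in probability. Suppose not, so that $U_n>\epsilon n$ with probability at least $\delta$ along a subsequence. Then
\[
\E[U_n]\ge \epsilon\delta n-o(n)-O(n)\cdot\P(U_n<-o(n))=\epsilon\delta n-o(n),
\]
which contradicts $\limsup\E[U_n]/n\le 0$.

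One technical point is that Lemma~\ref{lem:upper_semi} is stated for $G_n$ converging locally. For random $G_n$, the local weak limit is understood in the annealed sense, which is what the lemma uses. This is fine because the expectation above is also annealed.

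Rearranging $U_n=o(n)$ yields $\mathrm{rk}_d(G_n)\ge dn-n\max(\phi(0),\phi(\hat p))-o(n)$. By Claim~\ref{clm:thisclm}, this equals $m+\min(0,dn_{d+1}-m_{d+1})-o(n)$, which is the matching lower bound.
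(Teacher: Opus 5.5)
Your proposal is correct and follows the paper's proof essentially step for step. The deterministic bound \eqref{eq:core-rank-upper-bound} bounds the kernel dimension from below by the core-based quantity, and Claim~\ref{clm:thisclm} with Condition~\ref{ass:max_phi} identifies its limit as $\max_{[0,1]}\phi$. Lemma~\ref{lem:upper_semi} with Theorem~\ref{thm:Exp_loc_flex} bounds $\limsup_n \frac1n\E\dim Z(G_n,\p)$ by that same constant, and a first-moment argument upgrades this to convergence in probability. The only cosmetic difference is that the paper isolates the last step as a short auxiliary claim (using $Z_n\ge\max(0,W_n)$), whereas you run it inline using the boundedness of $U_n/n$.
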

We start with the following easy claim in convergence of random variables.
\begin{claim}
    Let $w$ be a real number, and suppose that $Z_n,W_n$ are sequences of random variables satisfying $\max(0,W_n)\le Z_n$, $W_n\xrightarrow{p}w$ and $\limsup\E Z_n\le w$. Then $Z_n \xrightarrow{p}w$.
\end{claim}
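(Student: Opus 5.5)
The plan is to split the deviation $Z_n-w$ into its negative and positive parts, control the negative part by $W_n$, and the positive part through the expectation bound. Since $\max(0,W_n)\le Z_n$, the variables $Z_n$ are nonnegative. Also $Z_n\ge W_n\xrightarrow{p}w$, so for every $\epsilon>0$,
\[
\P(Z_n<w-\epsilon)\le \P(W_n<w-\epsilon)\to 0 .
\]
This handles the lower tail, and it remains to show $\P(Z_n>w+\epsilon)\to 0$.

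For the upper tail, write $Z_n=\max(0,W_n)+(Z_n-\max(0,W_n))$, where the second summand $D_n$ is nonnegative. Since $Z_n\ge0$ and $\limsup\E Z_n\le w$, we get $w\ge 0$. Hence $\max(0,W_n)\xrightarrow{p}\max(0,w)=w$, and $\max(0,W_n)\ge 0$. Fatou's lemma applied along subsequences converging almost surely then gives
\[
\liminf\E[\max(0,W_n)]\ge w .
\]
Combined with $\E[\max(0,W_n)]\le\E Z_n$ and $\limsup\E Z_n\le w$, this yields $\E[\max(0,W_n)]\to w$ and $\E D_n=\E Z_n-\E\max(0,W_n)$, so $\limsup\E D_n\le 0$, i.e.\ $\E D_n\to 0$. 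By Markov's inequality $D_n\xrightarrow{p}0$. Therefore $Z_n=\max(0,W_n)+D_n\xrightarrow{p}w$.

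The only slightly delicate step is the Fatou lower bound on $\E\max(0,W_n)$, since convergence is only in probability. The standard subsequence argument handles it: if $\liminf$ were smaller than $w$, pass to a subsequence realizing it, then extract a further subsequence on which $W_n\to w$ almost surely, and apply Fatou's lemma to the nonnegative variables $\max(0,W_n)$ to get a contradiction. No integrability of $W_n$ is needed, because only the nonnegative part $\max(0,W_n)$ enters, and its expectation is finite as $\max(0,W_n)\le Z_n$.
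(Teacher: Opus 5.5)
Your proof is correct, but it takes a somewhat different route from the paper. The paper argues in one step. Since $Z_n\ge 0$, and since $Z_n\ge w-\varepsilon$ on $\{W_n\ge w-\varepsilon\}$ with an extra $\sqrt\varepsilon$ on $\{Z_n\ge w+\sqrt\varepsilon\}$, it gets
\[
\E Z_n\ge (w-\varepsilon)\P(W_n\ge w-\varepsilon)+\sqrt\varepsilon\,\P(Z_n\ge w+\sqrt\varepsilon,\ W_n\ge w-\varepsilon).
\]
Taking $\limsup$ gives $\limsup_n\P(Z_n\ge w+\sqrt\varepsilon)\le\sqrt\varepsilon$. You instead isolate the slack $D_n=Z_n-\max(0,W_n)\ge 0$. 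You show $\E\max(0,W_n)\to w$ via Fatou and the sandwich $\E\max(0,W_n)\le\E Z_n$, conclude $\E D_n\to 0$, and finish with Markov.

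Your version proves more: the slack tends to $0$ in $L^1$ and $\E Z_n\to w$. That is exactly the fact later used in Corollary~\ref{cor:treeX}, where it appeals to the proof of Theorem~\ref{thm:rd_degreeX}. The paper's version is more elementary, avoiding Fatou and the almost-sure subsequence extraction altogether. Your Fatou step can also be replaced by the bound $\E\max(0,W_n)\ge(w-\varepsilon)\P(W_n\ge w-\varepsilon)$, which is precisely the first term of the paper's inequality, so the two arguments rest on the same estimate. Finally, your separate lower-tail paragraph is redundant, since the decomposition already gives two-sided convergence.
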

\begin{proof}
    Since \(0\le Z_n\), necessarily \(w\ge 0\). For every \(\varepsilon>0\),
    \[
        \P(Z_n<w-\varepsilon)\le \P(W_n<w-\varepsilon)\to 0 .
    \]
    Also,
    \[
        \E Z_n\ge (w-\varepsilon)\P(W_n\ge w-\varepsilon)
        +\sqrt\varepsilon \P(Z_n\ge w+\sqrt\varepsilon,\ W_n\ge w-\varepsilon).
    \]
    Taking limsup gives
    \[
        w\ge (w-\varepsilon)+\sqrt\varepsilon\P(Z_n\ge w+\sqrt\varepsilon)\,,
    \]
    hence \(Z_n\to w\) in probability.
\end{proof}
\begin{proof}[Proof of Theorem \ref{thm:rd_degreeX}]
Sample $\p_n:V(G_n)\to\R^d$ by drawing each $\p(v)$ from some admissible probability distribution $\nu$ on $\R^d$.
Let $$Z_n= \frac{1}n\dim Z(G_n,\p_n)\ge 0\,,$$ and $$W_n = \frac{1}n\max(dn-m,\,dn-m - dn_{d+1}+m_{d+1})\,.$$
First, since \(\p_n\) is almost surely generic,
\[
\mathrm{rk}_d(G_n)=dn-\dim Z(G_n,\p_n)
\]
holds almost surely. Hence, by
\eqref{eq:core-rank-upper-bound}, we find that $Z_n\ge W_n$ holds almost surely.
Moreover, by Condition \ref{ass:max_phi} and Claim \ref{clm:thisclm}, we have that, as $n\to\infty$,
\begin{equation}\label{eq:W}
W_n\xrightarrow{p} \max\{\phi(p):p\in[0,1]\}\,.  
\end{equation}
Finally, equation \eqref{eq:dimZ_local} implies that
\[
\dim Z(G_n,\p_n) = \sum_{o_n\in V(G_n)}\phi_{G_n,\p_n}(o_n)\,.
\]
In addition, since the local weak limit of $G_n$ is a $\mathrm{GW}(X,Y)$-distributed tree $(T,o)$, we find by Lemma \ref{lem:upper_semi} and Theorem \ref{thm:Exp_loc_flex} that
\[
\limsup \E[Z_n] = \limsup \E[\phi_{G_n,\p_n}(o_n)] \le \E[\phi_{T,\nu}(o)]\le \max\{\phi(p):p\in[0,1]\}\,.
\]
By the previous claim, we find that a.a.s.
\[
\dim Z(G_n,\p_n)
=
\max(dn-m,\,dn-m-(dn_{d+1}-m_{d+1}))+o(n)\,,
\]
and the claim follows.
\end{proof}

We derive an analog corollary on the local flexibility of Galton--Watson trees.

\begin{corollary}\label{cor:treeX}
    Suppose that $X$ satisfies Condition \ref{ass:max_phi} and that there exists a sequence of $(X,d)$-typical graphs. Let $(T,o)$ be a $\mathrm{GW}(X,Y)$-distributed tree and $\nu$ an admissible probability measure on $\R^d$. Then,
    \[
    \E[\phi_{T,\nu}(o)] = d - \frac{\E[X]}{2} + \max\left(0,-d\cdot\P(X_{\hat p}\ge d+1)+\frac{\hat p^2\E[X]}{2}\right)\,.
    \]
\end{corollary}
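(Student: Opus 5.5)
The plan is to squeeze $\E[\phi_{T,\nu}(o)]$ between a lower and an upper bound that both equal $\max\{\phi(0),\phi(\hat p)\}$. First I will rewrite the right-hand side of the corollary in those terms. By the computation in the proof of Claim~\ref{clm:thisclm}, $\phi(0)=d-\E[X]/2$ and
\[
\phi(\hat p)=d-\frac{\E[X]}2-d\,\P(X_{\hat p}\ge d+1)+\frac{\hat p^2\E[X]}2 .
\]
So the claimed value is exactly $\max(\phi(0),\phi(\hat p))$. By Condition~\ref{ass:max_phi}, this equals $\max_{p\in[0,1]}\phi(p)$.

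\textbf{Upper bound.} Theorem~\ref{thm:Exp_loc_flex} gives $\E[\phi_{T,\nu}(o)]=\phi(p)$ for some solution $p\in[0,1]$ of $p=\P(Y_p\ge d)$. Hence $\E[\phi_{T,\nu}(o)]\le\max_{[0,1]}\phi$. We need the second moment of $X$ to be finite; this is part of the standing assumptions of the subsection.

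\textbf{Lower bound.} Here I use the assumed $(X,d)$-typical sequence $G_n$, with an embedding $\p_n$ whose coordinates are i.i.d.\ samples of $\nu$. As in the proof of Theorem~\ref{thm:rd_degreeX}, define $Z_n=\frac1n\dim Z(G_n,\p_n)$ and $W_n$ by the same formula. Inequality~\eqref{eq:core-rank-upper-bound} gives $Z_n\ge \max(0,W_n)$ almost surely. Claim~\ref{clm:thisclm} together with Condition~\ref{ass:max_phi} gives $W_n\to\max\phi$ in probability. Since $W_n\le d$, and $Z_n\ge 0$, Fatou's lemma (or simply convergence in probability of the bounded variable $\max(0,W_n)$) gives
\[
\liminf_n \E[Z_n]\ge \max\phi .
\]
Next, \eqref{eq:dimZ_local} gives $\E[Z_n]=\E[\phi_{G_n,\p_n}(o_n)]$ with $o_n$ uniform. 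Lemma~\ref{lem:upper_semi} then yields $\limsup_n\E[Z_n]\le\E[\phi_{T,\nu}(o)]$; its hypotheses hold because finite frameworks are self-adjoint and $(T,\nu)$ is self-adjoint by Claim~\ref{clm:GWSA}. Chaining the two displays gives
\[
\max\phi\le\E[\phi_{T,\nu}(o)] .
\]
Combined with the upper bound, equality follows.

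The only delicate point is the lower bound, and there the one thing to check is the passage from convergence in probability of $W_n$ to a lower bound on $\E[Z_n]$. This is immediate because $0\le Z_n\le d$ and $Z_n\ge W_n$. Everything else is bookkeeping with results already established.
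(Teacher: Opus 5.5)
Your proposal is correct and follows the paper's proof. The upper bound comes from Theorem~\ref{thm:Exp_loc_flex} together with Condition~\ref{ass:max_phi}, and the lower bound applies Lemma~\ref{lem:upper_semi} to an $(X,d)$-typical sequence using the estimates from the proof of Theorem~\ref{thm:rd_degreeX}; the only cosmetic difference is that you derive $\liminf_n\E[Z_n]\ge\max\phi$ directly from $Z_n\ge\max(0,W_n)$, whereas the paper invokes the full convergence $Z_n\to\max\phi$ established in that proof.
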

\begin{proof}
By Theorem \ref{thm:Exp_loc_flex},
\[
    \E[\phi_{T,\nu}(o)]\le \max\{\phi(p):p\in[0,1]\}\,,
\]
and by Condition \ref{ass:max_phi} the maximum is attained at either \(0\) or
\(\hat p\). Additionally, the proof of Claim \ref{clm:thisclm} gives
\[
    \phi(0)=d-\frac{\E[X]}2
\]
and
\[
    \phi(\hat p)
    =
    d-\frac{\E[X]}2
    -d\P(X_{\hat p}\ge d+1)
    +\frac{\hat p^2\E[X]}2 .
\]
It remains to show that the upper bound is attained. Indeed, let
$G_n$ be an $(X,d)$-typical graph sequence, and let $\p_n:V(G_n)\to\R^d$ be sampled according to
$\nu$. By the local weak convergence of \(G_n\) to \(\mathrm{GW}(X,Y)\)
and Lemma \ref{lem:upper_semi}, we have
\[
    \E[\phi_{T,\nu}(o)]\ge \limsup \frac{1}{n}\E\dim Z(G_n,\p_n)=
    \max\{\phi(p):p\in[0,1]\},
\]
where the last equality follows from the proof of Theorem
\ref{thm:rd_degreeX}.
\end{proof}

\subsection{Erd\H{o}s--R\'enyi random graphs}
\label{sec:ER}
We proceed to the proofs of Theorems \ref{thm:mainGnp_rank} and \ref{thm:mainGnp_closure} regarding the $d$-rigidity phase
transition in $G(n,p)$. 

Let $c>0$ be fixed, and let $X= \operatorname{Poi}(c)$. In this case, the shifted size-biasing
$Y$ of $X$ is also $\operatorname{Poi}(c)$. We first note that \(X\) satisfies Condition \ref{ass:max_phi}. Indeed, since \(Y\sim \operatorname{Poi}(c)\), we have
\(Y_p\sim \operatorname{Poi}(cp)\). Hence, using \eqref{eq:phi''}, we find that
\[
\phi''(p)=c\left(\frac{e^{-cp}c^dp^{d-1}}{(d-1)!}-1 \right)\,.
\]
This function is unimodal in $[0,1]$ and the claim follows from Claim \ref{clm:max_phi}.

It is well-known that \(G(n,c/n)\) converges locally weakly to $\mathrm{GW}(X,Y)$, and the parameters of its $(d+1)$-core were found in~\cite{PittelSpencerWormaldCore}, showing it is $(X,d)$-typical for all $c\ne \gamma_d$.

Consequently, the following corollary is a special case of Theorem \ref{thm:rd_degreeX}.
\begin{corollary}\label{cor:rd_ER}
    Let \(c>0\) be fixed, and let \(G\sim G(n,c/n)\). Then, a.a.s.,       
    \begin{itemize}
        \item If \(c<c_d\) then \(\mathrm{rk}_d(G) = (1+o(1))\frac{c}2n.\)
        \item If \(c>c_d\) then \[\mathrm{rk}_d(G) = (1+o(1))\left(\frac c2 + d\,\P\big(\operatorname{Poi}(c\hat p)\ge d+1\big)
        - \frac{c\hat p^2}{2}
        \right)n\,.\]
    \end{itemize}
\end{corollary}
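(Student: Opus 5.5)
The corollary should follow by specializing Theorem~\ref{thm:rd_degreeX} to $X=\operatorname{Poi}(c)$ and then rewriting its right-hand side in the explicit form stated. We already know that $X$ satisfies Condition~\ref{ass:max_phi}. By~\cite{PittelSpencerWormaldCore}, $G(n,c/n)$ is $(\operatorname{Poi}(c),d)$-typical for every $c\neq\gamma_d$. Theorem~\ref{thm:rd_degreeX} then gives, a.a.s.,
\[
\mathrm{rk}_d(G)=m+\min(0,\,dn_{d+1}-m_{d+1})+o(n).
\]
The value $c=\gamma_d$ is excluded by the typicality hypothesis. However, $\gamma_d<c_d$, so that case lies inside the regime $c<c_d$. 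There we need only $\mathrm{rk}_d(G)=m-o(n)$. I would handle it separately, either by a monotone coupling in $c$ (rank is monotone under adding edges, and $c/2$ is continuous in $c$), or by noting that the upper bound $\limsup\E Z_n\le\max\phi$ uses only local convergence, so the lower bound $Z_n\ge dn-m$ already suffices.

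Next I identify the two regimes. By Claim~\ref{clm:thisclm}, a.a.s.
\[
dn_{d+1}-m_{d+1}=\bigl(\phi(0)-\phi(\hat p)\bigr)n+o(n),
\]
and $m=\frac c2 n+o(n)$. The threshold $c_d$ is where the $(d+1)$-core reaches edge density $d$, i.e. where $\phi(\hat p)=\phi(0)$. So I have to check that $\phi(\hat p)<\phi(0)$ for $c<c_d$ and $\phi(\hat p)>\phi(0)$ for $c>c_d$. This is the main point requiring care. I would argue it through the known fact, quoted in the introduction from~\cite{CainSandersWormald,FernholzRamachandran}, that the core density $c\hat p^2/\bigl(2\P(\operatorname{Poi}(c\hat p)\ge d+1)\bigr)$ crosses $d$ exactly once, at $c_d$, and is below $d$ before it. If needed I would verify directly that $\phi(\hat p)-\phi(0)=\frac{c\hat p^2}{2}-d\,\P(\operatorname{Poi}(c\hat p)\ge d+1)$ is increasing in $c$ on $[\gamma_d,\infty)$ and negative at $\gamma_d$. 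For $c<\gamma_d$ we have $\hat p=0$ and the difference vanishes. At exactly $c=c_d$ the two formulas coincide, so no issue arises there.

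Finally, I substitute. For $c<c_d$ the minimum is $0$ up to $o(n)$, so $\mathrm{rk}_d(G)=\frac c2 n+o(n)=(1+o(1))\frac c2 n$. For $c>c_d$ the minimum equals $dn_{d+1}-m_{d+1}$. Using \eqref{eq:JLconditions} with $X_{\hat p}\sim\operatorname{Poi}(c\hat p)$ and $\E[X]=c$, this gives
\[
\mathrm{rk}_d(G)=\left(\frac c2+d\,\P(\operatorname{Poi}(c\hat p)\ge d+1)-\frac{c\hat p^2}{2}\right)n+o(n).
\]
The leading constant is positive (it is at least the rank of the forest part, since the $o(n)$ error cannot make the rank negative), so the additive $o(n)$ becomes a $(1+o(1))$ factor.
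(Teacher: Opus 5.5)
Your proposal is correct and is essentially the paper's argument: you specialize Theorem~\ref{thm:rd_degreeX} to $X=\operatorname{Poi}(c)$ using Condition~\ref{ass:max_phi} and the typicality from \cite{PittelSpencerWormaldCore} for $c\neq\gamma_d$, and you handle $c=\gamma_d$ by monotonicity of the rank. The sign identification you single out as the delicate step is left implicit in the paper's definition of $c_d$, and it does follow as you suggest: the paper's computation $\rho'(c)=(1-\hat p^2)/2$ gives $\frac{d}{dc}\bigl(\phi(\hat p)-\phi(0)\bigr)=\hat p^2/2>0$ on $(\gamma_d,\infty)$.
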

Formally, we are not able to apply Theorem \ref{thm:rd_degreeX} for $c=\gamma_d$, but the statement follows from the monotonicity of the rank.
\begin{proof}[Proof of Theorem \ref{thm:mainGnp_rank}]
The second item is already proved in Corollary \ref{cor:rd_ER}, so we focus on the first item. 
Let \(c<c_d\), and choose
\(c<c'<c_d\). We sample \(G'\sim G(n,c'/n)\), and obtain
\(G\sim G(n,c/n)\) by retaining each edge of \(G'\) independently with
probability \(c/c'\). By Corollary \ref{cor:rd_ER}, we have a.a.s.
\[
    \mathrm{rk}_d(G')=(1+o(1))\frac{c'}2 n.
\]
Therefore, for a generic placement \(\p\), the left-kernel \(S\) of
\(R(G',\p)\) satisfies
\[
    \dim S=m(G')-\mathrm{rk}_d(G')=o(n).
\]

We now expose the retained graph by deleting the edges of \(G'\) one at a
time in a uniformly random order, until each edge has been retained with
probability \(c/c'\). Let \(S_t\) be the left-kernel after \(t\) deletions.
We use the standard gap theorem for \(k\)-cores in random graphs:
There exists $\alpha >0$ such that, a.a.s., every nonempty subgraph of
\(G(n,c'/n)\) with minimum degree at least \(d+1\) has at least
\(\alpha n\) vertices~\cite{LuczakCoreSize}. Hence we may assume that every circuit of the \(d\)-rigidity matroid contained in \(G'\) has at least
\(\alpha n\) edges. Indeed, every such circuit has minimum degree at least
\(d+1\).

Thus, whenever \(S_t\neq 0\), the current graph contains a circuit with at
least \(\alpha n\) edges. The next deleted edge belongs to this circuit
with probability bounded below by a positive constant depending only on
\(c'\) and \(\alpha\). If this happens, then \(\dim S_t\) decreases by
one. Since initially \(\dim S=o(n)\), while the number of deleted edges is a.a.s.
\((1-c/c'+o(1))|E(G')|=\Omega(n)\), a Chernoff bound implies that a.a.s.
the left-kernel vanishes before the deletion process ends. Hence the
resulting graph \(G\sim G(n,c/n)\) is a.a.s. $d$-independent.
\end{proof}

We turn to the phase transition in the rigidity closure.

\begin{proof}[Proof of Theorem \ref{thm:mainGnp_closure}]
We start with item $(1b)$. 
Choose \(c<c'<c_d\), and couple \(G\sim G(n,c/n)\) and \(G'\sim G(n,c'/n)\) by first
sampling \(G\), and then sprinkling each missing edge independently with
probability \((c'-c)/(n-c)\). Suppose that, with probability bounded away
from zero,
\[
    |\operatorname{cl}_d(G)\setminus E(G)|>\varepsilon n
\]
for some \(\varepsilon>0\). Conditional on this event, the sprinkling adds
at least one edge of \(\operatorname{cl}_d(G)\setminus E(G)\) with
probability bounded away from zero, implying that, with probability bounded away from
zero, \(G'\) is not $d$-independent, in contradiction to Theorem \ref{thm:mainGnp_rank}. In addition, it is easy to see that a.a.s., every $v$ vertices in $G$ span at most $O(v)$ edges, 
whence any clique in $\operatorname{cl}_d(G)$ of size $v$ uses at least $\Omega(v^2)$ edges outside of $G$. Hence, such a clique must have size $v=o(\sqrt n)$. This concludes $(1b)$.

We turn to derive $(1a)$ for $d\ge 3$ since the case $d=2$ was proved in \cite{KasiviswanathanMooreTheranRigidityTransition}. 
Every induced $d$-rigid subgraph spans a clique in the closure and therefore,
by item $(1b)$, is of size $o(\sqrt n)$. In addition, it must have minimum degree at least $3$, in contradiction to the gap theorem for \(k\)-cores \cite{LuczakCoreSize}.

\smallskip

Turning to item $(2)$, we start by giving a lower bound on the number of edges in $\operatorname{cl}_d(G)$.
\begin{claim}\label{clm:closure_edges_lower}
    A.a.s.,
\[
    \left|\operatorname{cl}_d(G)\right|
    \ge
    \left(\hat p^2-o(1)\right)\binom{n}2\,.
\]
\end{claim}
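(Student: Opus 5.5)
We want a lower bound on the number of edges of the closure of $G\sim G(n,c/n)$ with $c>c_d$. We argue by sprinkling, combined with the sharp rank estimate of Corollary~\ref{cor:rd_ER}. Set $F(c)=\frac c2+d\P(\operatorname{Poi}(c\hat p)\ge d+1)-\frac{c\hat p^2}{2}$, so that $\mathrm{rk}_d(G(n,c/n))=(F(c)+o(1))n$ a.a.s.

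First we compute the derivative of $F$ on $(c_d,\infty)$, where $\hat p(c)$ is differentiable. Write $\lambda=c\hat p$. The fixed-point equation gives $\hat p=\P(\operatorname{Poi}(\lambda)\ge d)$. Differentiating in $c$, the terms involving $\hat p'$ should cancel, as in an envelope argument, because $\hat p$ is a critical point of $\phi$: indeed $\phi'(\hat p)=0$ by the formula in the proof of Claim~\ref{clm:max_phi}, and $F(c)=d-\phi_c(\hat p)$. Differentiating $\phi_c(p)$ directly in $c$ at fixed $p$ then yields
\[
F'(c)=\tfrac12(1-\hat p^2)
\]
after a short calculation. The calculation uses $\partial_c\E[\min(\operatorname{Poi}(cp),d)]=p\P(\operatorname{Poi}(cp)\le d-1)$, which equals $p(1-\hat p)$ at $p=\hat p$. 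This derivative means that adding a uniformly random new edge increases the rank with asymptotic probability $1-\hat p^2$, so the random new edge lies in the closure with probability about $\hat p^2$.

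To make this rigorous, fix $\varepsilon>0$ and couple $G\sim G(n,c/n)$ with $G'=G\cup S$, where $S$ consists of $\varepsilon n/2$ uniformly random non-edges added one at a time, so that $G'\approx G(n,(c+\varepsilon)/n)$. By Corollary~\ref{cor:rd_ER} at $c$ and at $c+\varepsilon$, a.a.s.
\[
\mathrm{rk}_d(G')-\mathrm{rk}_d(G)\le \bigl(F(c+\varepsilon)-F(c)+o(1)\bigr)n .
\]
On the other hand, each added edge increases the rank unless it lies in the closure of the current graph, and that closure contains $\operatorname{cl}_d(G)$. Hence
\[
\mathrm{rk}_d(G')-\mathrm{rk}_d(G)\ge \#\{e\in S: e\notin \operatorname{cl}_d(G_{\text{current}})\}.
\]
We need the reverse direction: the number of added edges falling in the closure is at least the number of edges of $S$ that land in $\operatorname{cl}_d(G)$. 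So we bound the drop from above. Suppose $|\operatorname{cl}_d(G)|\le(\hat p^2-\delta)\binom n2$. The closure only grows along the process, but we need it not to grow much. For that we use monotonicity of $\hat p$ in $c$ and run the argument at the final graph instead.

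Concretely, let $C=\operatorname{cl}_d(G')$, which contains the closures of all intermediate graphs. Every sprinkled edge outside $C$ increases the rank. The edges of $S$ are a uniform random set of non-edges of $G$, so the number of them outside $C$ is about $(1-|C|/\binom n2)\varepsilon n/2$, up to a correction because $S$ and $C$ are dependent. Handle this by a first-moment/union argument: reveal the edges one at a time, and for each step compare with the closure of the graph at the final time using the fact that the closure is monotone. This gives
\[
(1-|C|/\tbinom n2)\tfrac{\varepsilon}{2}\le F(c+\varepsilon)-F(c)+o(1)=\tfrac{\varepsilon}{2}(1-\hat p(c)^2)+O(\varepsilon^2),
\]
and hence $|\operatorname{cl}_d(G(n,(c+\varepsilon)/n))|\ge(\hat p(c)^2-O(\varepsilon))\binom n2$. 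Applying this with $c-\varepsilon$ in place of $c$ and using continuity of $\hat p$ on $(c_d,\infty)$ gives the claim after $\varepsilon\to0$.

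The main obstacle is the dependence between the random edges and the closure. I expect to resolve it by the stepwise exposure just described, bounding the expected number of rank increases by $\sum_t(1-|\operatorname{cl}_d(G_t)|/\binom n2)\ge(\varepsilon n/2)(1-|C|/\binom n2)$, together with concentration of the rank, which holds as a bounded-differences martingale.
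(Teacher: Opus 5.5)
Your proposal is correct and is essentially the paper's argument. The paper likewise sprinkles from $G(n,c'/n)$ with $c_d<c'<c$ up to $G(n,c/n)$, and uses that the limiting normalized rank has derivative $\frac{1-\hat p^2}{2}$; it gets this by differentiating the fixed-point equation rather than through your envelope identity $\phi'(\hat p)=0$. It also bounds every intermediate closure by the closure $C$ of the final graph, so each sprinkled edge raises the rank with conditional probability at least $1-|C|/\binom n2$.

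There are two loose spots. First, your count of edges of $S$ lying outside $C$ is vacuous, since $S\subseteq C$. Second, the concentration you need is for the number of rank increases around the sum of these conditional probabilities (Azuma for that martingale), not concentration of $\mathrm{rk}_d$ itself. The paper gets this instead by coupling each step with an independent uniform variable and applying the law of large numbers inside a proof by contradiction.
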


\begin{proof}
Fix \(\varepsilon>0\). Choose \(c'\) with \(c_d<c'<c\) sufficiently close
to \(c\) so that
\begin{equation}\label{eq:cprime}
    \frac{\rho(c)-\rho(c')}{(c-c')}
    <
    \frac{1-\hat p^2}{2}+\varepsilon,    
\end{equation}
where
\[
    \rho(c)
    =
    \frac{c}{2}
    -
    \frac{c\hat p(c)^2}{2}
    +
    d\,\P\big(\operatorname{Poi}(c\hat p(c))\ge d+1\big)
\]
is the limiting normalized rank from Corollary \ref{cor:rd_ER}. This is
possible since
\[
    \rho'(c)=\frac{1-\hat p^2}{2}.
\]
Indeed,
\[
\rho'(c)
=
\frac12-\frac{\hat p^2}{2}
-c\hat p\hat p'
+
d\,\P(\operatorname{Poi}(c\hat p)=d)(\hat p+c\hat p').
\]
The last two terms cancel since
\[
d\,\P(\operatorname{Poi}(c\hat p)=d)
=
c\hat p\,\P(\operatorname{Poi}(c\hat p)=d-1),
\]
and 
\[
\hat p'
=
\P(\operatorname{Poi}(c\hat p)=d-1)(\hat p+c\hat p')
\]
by differentiating the fixed-point equation.

Couple the random graph process so that \(G'\sim G(n,c'/n)\) and
\(G\sim G(n,c/n)\), with \(G'\subseteq G\). Equivalently, after
sampling \(G'\), we sprinkle \(T\) additional random edges, where a.a.s.
\begin{equation}\label{eq:T}
    T=(1+o(1))\frac{c-c'}{2}n.    
\end{equation}
Suppose, for contradiction, that with probability bounded away from zero,
\[
    |\operatorname{cl}_d(G)|
    \le
    \left(\hat p^2-4\varepsilon\right)\binom n2 .
\]

Let
\[
    G'=G^0\subseteq G^1\subseteq\cdots\subseteq G^T=G
\]
be the graphs obtained during the sprinkling process, where \(G^{i+1}\)
is obtained from \(G^i\) by adding one uniformly chosen edge from
\(E(K_n)\setminus E(G^i)\). By monotonicity of matroid closure, on the
event above we have, for every \(0\le i\le T\),
\[
    \operatorname{cl}_d(G^i)\subseteq \operatorname{cl}_d(G),
\]
and therefore
\[
    |\operatorname{cl}_d(G^i)|
    \le
    \left(\hat p^2-4\varepsilon\right)\binom n2 .
\]

Now expose the next edge as follows. Given \(G^i\), let
\[
    a_i
    =
    \frac{\binom n2-|\operatorname{cl}_d(G^i)|}
         {\binom n2-|E(G^i)|} 
         \ge 1-\hat p^2+4\varepsilon\,.
\]
Draw \(r_i\sim U[0,1]\). If \(r_i\le a_i\), choose the next edge uniformly
from
\[
    E(K_n)\setminus \operatorname{cl}_d(G^i),
\]
and hence the rank increases by one. Otherwise, choose it uniformly from
\[
    \operatorname{cl}_d(G^i)\setminus E(G^i).
\]
This gives exactly a uniformly chosen edge from
\(E(K_n)\setminus E(G^i)\). 
Therefore the number of rank-increasing sprinkled edges is at least
\[
    \#\{i:r_i\le 1-\hat p^2+4\varepsilon+o(1)\}.
\]
By the law of large numbers and \eqref{eq:T}, this number is a.a.s. at least
    $(1-\hat p^2+3\varepsilon)T.$
    
On the other hand, by Corollary \ref{cor:rd_ER}, a.a.s. the actual rank
increase from \(G'\) to \(G\) is
\[
    \big(\rho(c)-\rho(c')+o(1)\big)n <(1+o(1))\cdot( 1-\hat p^2+2\varepsilon)T
\]
due to \eqref{eq:cprime} and \eqref{eq:T},
yielding a contradiction. Since \(\varepsilon>0\) was arbitrary, the claim follows.
\end{proof}

Denote by $\widehat V_r$ the set of vertices $v$ in $G$ whose $r$-neighborhood is a tree that contains a $d$-ary tree of height $r$ rooted at $v$, and by $C_r$ the set of vertices $v$ whose $r$-neighborhood is not a tree.

\begin{claim}\label{clm:deterministic_closure_r}
Let $G$ be a graph, $r\ge 2$ an integer and $uv\in\operatorname{cl}_d(G)$. Then, either
    $\{u,v\}\subset \widehat V_r$,
    $\operatorname{dist}_G(u,v)<r$, or
    $\{u,v\}\cap C_r\ne\emptyset$.
\end{claim}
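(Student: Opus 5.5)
We argue by contraposition. Suppose $uv\in\operatorname{cl}_d(G)$, $\operatorname{dist}_G(u,v)\ge r$ and $\{u,v\}\cap C_r=\emptyset$; we show $u\in\widehat V_r$, and the same argument gives $v\in\widehat V_r$. Fix a generic $\p$. Recall that $uv\in\operatorname{cl}_d(G)$ if and only if every infinitesimal motion $q\in Z(G,\p)$ satisfies $\langle q(u)-q(v),\p_{uv}\rangle=0$. So, assuming $u\notin\widehat V_r$, it suffices to construct $q\in Z(G,\p)$ with $q(v)=0$ and $\langle q(u),\p_{uv}\rangle\neq 0$.

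Since $u\notin C_r$, the ball $B_G(u,r)$ is an induced tree, which we root at $u$. Label its vertices \emph{good} or \emph{bad} from the leaves upward:
\begin{itemize}
\item every vertex at depth exactly $r$ is good;
\item a vertex at depth $k<r$ is good if and only if it has at least $d$ good children.
\end{itemize}
By induction on $r-k$, a vertex $w$ at depth $k$ is good exactly when $B_G(u,r)$ contains a $d$-ary tree of height $r-k$ rooted at $w$ and descending into its subtree. Hence the assumption $u\notin\widehat V_r$ says exactly that $u$ is bad.

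Define $q$ as follows. Set $q\equiv 0$ on every good vertex, on every descendant of a good vertex, and on all vertices outside $B_G(u,r-1)$. In particular $q(v)=0$, since $v$ lies at distance at least $r$ from $u$. Every bad vertex is joined to its parent by a path of bad vertices, so we can define $q$ on bad vertices top-down.
\begin{itemize}
\item \textbf{The root.} $u$ has at most $d-1$ good children $c$. Choose $q(u)$ orthogonal to the corresponding vectors $\p_{uc}$ and with $\langle q(u),\p_{uv}\rangle\ne0$. This is possible by genericity: $\p_{uv}$ is not in the span of at most $d-1$ other difference vectors.
\item \textbf{A bad child $w$ of a bad vertex $x$.} $w$ has at most $d-1$ good children $c$. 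Choose $q(w)\in\R^d$ satisfying $\langle q(w),\p_{wc}\rangle=0$ for these $c$, together with the one affine constraint $\langle q(w),\p_{xw}\rangle=\langle q(x),\p_{xw}\rangle$. These are at most $d$ linear conditions with linearly independent normals, by genericity, so a solution exists.
\end{itemize}

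It remains to check that $q$ is an infinitesimal motion, i.e., that $\langle q(a)-q(b),\p_{ab}\rangle=0$ for every edge $ab$.
\begin{itemize}
\item Edges with both endpoints at zero are trivially satisfied. This covers edges among good vertices and their descendants, and edges leaving $B_G(u,r-1)$: since $B_G(u,r)$ is an induced tree, any such edge joins depth $r-1$ to depth $r$, or lies farther out.
\item An edge from a bad vertex to a good child is satisfied by the orthogonality condition imposed at the bad vertex.
\item An edge from a bad vertex to a bad child is satisfied by the affine condition imposed at the child.
\end{itemize}
Because $B_G(u,r)$ is an induced tree, there are no other edges among its vertices, so all edges are covered. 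Thus $q\in Z(G,\p)$, and $\langle q(u)-q(v),\p_{uv}\rangle=\langle q(u),\p_{uv}\rangle\neq0$, contradicting $uv\in\operatorname{cl}_d(G)$.

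The only delicate points are the bookkeeping that every edge touching the ball is accounted for, which uses that the $r$-neighborhood is an \emph{induced} tree, and the genericity claims. For the latter, at most $d$ distinct difference vectors $\p_{ab}$ on tree edges (plus $\p_{uv}$ at the root) are linearly independent, which holds for generic $\p$ because the edges involved form a forest.
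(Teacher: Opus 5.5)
Your proof is correct. It shares the paper's overall strategy: pin the depth-$r$ vertices of the tree $B_G(u,r)$, extend a motion of the pinned tree by zero to all of $G$ (so $q(v)=0$), and use the absence of a rooted $d$-ary tree of height $r$ to get freedom at $u$.

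The difference is how the motion is produced. The paper computes forward flexibility matrices on the pinned tree via the recursion of Theorem~\ref{thm:recursion}. There $Q_x=0$ exactly when $x$ has at least $d$ children with vanishing $Q$ (your \emph{good} vertices), so $Q_u\neq 0$, and the paper then invokes genericity to get $\langle q(u),\p_{uv}\rangle\neq 0$. You instead build $q$ by hand, top-down, solving at each bad vertex at most $d$ linear conditions whose normals form a star and are generically independent.

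Your route is purely finite-dimensional and self-contained. It does not need to adapt Theorem~\ref{thm:recursion}, which is stated for unpinned self-adjoint frameworks, to a pinned boundary. It also does not need the almost-sure dichotomy $Q_c\p_{xc}=0\iff Q_c=0$ from Lemma~\ref{lem:Qo0}. Moreover, it makes explicit the genericity fact the paper leaves terse: $\p_{uv}$ is not in the span of the $\p_{uc}$ over the at most $d-1$ good children $c$ of $u$. That span's orthogonal complement is exactly the range of $Q_u$, and it does not involve $\p(v)$, which matters when $v$ sits at depth exactly $r$.

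The paper's version is shorter given its machinery. It also ties the claim to the same recursion that drives the rest of the argument, identifying the whole space of attainable $q(u)$ rather than a single motion.

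One wording fix: the set on which you define $q$ top-down should be described as the subtree of bad vertices with no good ancestor, which contains $u$. Your sentence about bad vertices being ``joined to their parent by a path of bad vertices'' is garbled. Bad descendants of good vertices are already set to zero, and zero satisfies your constraints there, so nothing conflicts.
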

\begin{proof}
    We show that if the $r$-neighborhood
of $u$ is a tree which does not contain a rooted $d$-ary tree of height
$r$, and if $\operatorname{dist}_G(u,v)\ge r$, then
$uv\notin \operatorname{cl}_d(G)$.
Indeed, denote by $(T,u)$ the $r$-neighborhood of $u$, rooted at $u$.
Embed $G$ in $\mathbb R^d$ according to an admissible measure $\nu$.
Associate a $d\times d$ matrix $Q_x$ to every vertex $x$ of $T$ as
follows. If $x$ has height $r$ in $T$, set $Q_x=0$ (which means that these boundary vertices are pinned); otherwise compute
$Q_x$ recursively, using the sampled embedding $\p$ and
Theorem~\ref{thm:recursion}. Since $(T,u)$ does not contain a rooted
$d$-ary subtree of height $r$, the recursion gives $Q_u\ne 0$.
In particular, there is a non-trivial infinitesimal motion $q$ of $T$
which preserves all edge lengths in $T$, satisfies $q(x)=0$ for every
vertex $x$ at height $r$, and satisfies $q(u)\ne 0$. Extend $q$ to all
of $G$ by setting $q(w)=0$ for every vertex $w\notin T$. Since the boundary
vertices of $T$ are fixed, this extension preserves the length of every
edge of $G$, hence $q\in\ker L(G,\p)$.
In particular, $q(v)=0$ while $q(u)\ne 0$. For a generic admissible
placement, the vector $q(u)-q(v)$ is not orthogonal to
$\p(u)-\p(v)$. Thus adding the edge $uv$ to $G$ increases its $d$-rigidity rank, and therefore $uv\notin \operatorname{cl}_d(G)$.
\end{proof}

Since \(G\sim G(n,c/n)\) converges locally weakly to
\(\mathrm{GW}(\operatorname{Poi}(c))\), we a.a.s. have $|C_r|=o(n)$ for every fixed \(r\ge 2\). 
In addition, the expected number of pairs $uv$ of distance less than $r$ in $G$ is $O(n)$ and is therefore a.a.s. of size $o(n^2)$. Therefore,
    Claim \ref{clm:deterministic_closure_r} gives that for every fixed $r\ge 2$, a.a.s.
    \begin{equation}\label{eq:closure-Vr-upper-bound}
    \left|
    \operatorname{cl}_d(G)\setminus \binom{\widehat V_r}{2}
    \right| =o(n^2)\,.
\end{equation}

In addition, \(\E[|\widehat V_r|]/n\to q_r\), where \(q_r\) is the
probability that the first \(r\) generations of
\(\mathrm{GW}(\operatorname{Poi}(c))\) contain a complete \(d\)-ary
tree rooted at the root. Moreover, a standard second moment argument
applies: for fixed \(r\), the \(r\)-neighborhoods of two uniformly chosen
vertices are disjoint and asymptotically independent with probability
\(1-o(1)\). Hence a.a.s. 
\begin{equation}\label{eq:hatVr}
    |\widehat V_r|=(1+o(1))q_rn\,.
\end{equation}
In addition, the numbers \(q_r\) satisfy the recursion
\[
    q_{r+1}
    =
    \P(\operatorname{Poi}(c q_r)\ge d),
    \qquad q_0=1\,,
\]
whence \(q_r\) converges to $\hat p.$ 
Therefore, by taking $r\to\infty$, 
\eqref{eq:closure-Vr-upper-bound}
and \eqref{eq:hatVr} imply that a.a.s.,
\begin{equation}\label{eq:closure_edges_upper}
\left|\operatorname{cl}_d(G)\right|
    \le
    \left(\hat p^2+o(1)\right)\binom{n}2\,,
\end{equation}
matching the lower bound from \ref{clm:closure_edges_lower}.

\medskip 

Next, we find a large clique in the closure.
Let $\varepsilon>0$. 
By Claim \ref{clm:closure_edges_lower}, 
\eqref{eq:closure-Vr-upper-bound} and 
\eqref{eq:hatVr} it holds that for all sufficiently large $r=r(\varepsilon)$, a.a.s. the subgraph of $\operatorname{cl}_d(G)$ induced by $\widehat V_r$ has edge density at least $1-\varepsilon^2$. By a standard counting argument, there exists a subset
\(A\subseteq \widehat V_r\) with
\[
    |A|\ge (1-\varepsilon)(|\widehat V_r|-1)\,,
\]
such that every $v\in A$ has at least $(1-\varepsilon)(|\widehat V_r|-1)$ neighbors from $\widehat V_r$ in $\operatorname{cl}_d(G)$. Hence, the subgraph $H$ of $\operatorname{cl}_d(G)$ induced by $A$ has minimum degree at least $(1-2\varepsilon)(|\widehat V_r|-1)$.
For a sufficiently small $\varepsilon>0$, the minimum-degree condition for rigidity from \cite{KrivelevichLewMichaeliRigidity} holds, implying that the graph \(H\) is generically \(d\)-rigid. Since \(H\) is an induced subgraph
of \(\operatorname{cl}_d(G)\), it follows that \(A\) induces a clique in
\(\operatorname{cl}_d(G)\). Letting $\varepsilon \to 0$ and $r=r(\varepsilon)\to\infty$ shows that $\operatorname{cl}_d(G)$ a.a.s. contains a clique of size $(\hat p-o(1))n$. Note that no clique in the closure has a strictly larger density due to \eqref{eq:closure_edges_upper}.

\medskip

Let \(\widehat G\) denote the \(((d+1)+d)\)-core of \(G\),
obtained from the
$(d+1)$-core by repeatedly adding vertices with at least $d$ neighbors in the already constructed graph.
It remains to show that the largest clique in the closure contains almost all the vertices of $\widehat G$. We claim that 
$$V(\widehat G)\subset \widehat V_r \cup C_r\,.$$
Indeed, suppose that $v\in V(\widehat G)\setminus C_r$. If $v$ is in the $(d+1)$-core and its $r$-neighborhood is
acyclic, then it contains the first $r$ layers of the $(d+1)$-regular
tree, whence $v\in \widehat V_r$. Otherwise, fix an order in which the vertices of
\(\widehat G\setminus V(G_{d+1})\) are added. Since \(v\) is not in the
\((d+1)\)-core, the \((d+1)\)-core is unchanged after deleting \(v\).
At the moment \(v\) is added, it has at least \(d\) neighbors that were
already present; by using the same earlier part of the addition order,
these neighbors belong to the \(((d+1)+d)\)-core of \(G\setminus v\).
Moreover, since \(v\notin C_r\), their \((r-1)\)-neighborhoods in
\(G\setminus v\) are trees. We then proceed by induction on \(r\) to show
that \(v\in \widehat V_r\).

Let \(A\) be a largest clique in \(\operatorname{cl}_d(G)\). By the previous paragraph, a.a.s.
\[
    |A|=(\hat p+o(1))n.
\]
Moreover, by \eqref{eq:closure-Vr-upper-bound}, a.a.s. \(A\) has \(o(n)\) vertices outside
\(\widehat V_r\).
Therefore, using \eqref{eq:hatVr},
\[
    |\widehat V_r\setminus A|
    =
    (q_r-\hat p+o(1))n.
\]
Since \(q_r\to \hat p\), for every \(\varepsilon>0\) and all sufficiently large fixed \(r\), a.a.s.
\[
|V(\widehat G)\setminus A|
\le
|\widehat V_r\setminus A|+|C_r|
\le
(\varepsilon+o(1))n .
\]
The proof is concluded by taking \(\varepsilon\to 0\).

\end{proof}

\subsection{Essentially high-girth regular graphs}
\label{sec:regular}
We now consider the special case where \(X\equiv k\), and the shifted
size-biasing of \(X\) is \(Y\equiv k-1\).
Therefore, by \eqref{eq:phi''},
\[
\phi''(p)=k\left(
d \binom{k-1}{d}p^{d-1}(1-p)^{k-1-d}-1
\right)\,.
\]
For \(k\ge d+1\), the function $\phi''$ is strictly unimodal, hence Condition \ref{ass:max_phi} holds by Claim \ref{clm:max_phi}. If \(k\le d\), then 
$\phi(p)=d-\frac{k}{2}(1+p^2)$ attains its maximum at $p=0$, and Condition \ref{ass:max_phi} holds.

\begin{proof}[Proof of Theorem~\ref{thm:main_high_girth_regular}]
An essentially high-girth sequence of \(k\)-regular graphs is $(X,d)$-typical. Indeed, local weak convergence holds by definition of essential high-girth, and $m=kn/2$. In addition, the equation
$p=\P(Y_p\ge d) =\P(\mathrm{Bin}(k-1,p)\ge d)$ has a unique solution $\hat p=0$ if $k\le d$, and the largest solution in $[0,1]$ is $\hat p=1$ if $k\ge d+1$. In the former case $n_{d+1}=m_{d+1}=0$ and in the latter case $n_{d+1}=n,~m_{d+1}=m$, thus \eqref{eq:JLconditions} is satisfied. The proof is completed by applying Theorem \ref{thm:rd_degreeX}.
\end{proof}

\begin{proof}[Proof of Theorem~\ref{thm:Tk}]
Since essentially high-girth $k$-regular graphs exist, we can apply Corollary~\ref{cor:treeX} for $\bbT_k$ and derive
\[
 \E[\phi_{\bbT_k,\nu}(o)]=\max(d-k/2,0)\,.
\]
By Lemma~\ref{lem:Qo0}, the event \(Q_o=0\) admits a zero-one law. Hence
\(Q_o=0\) almost surely if and only if
\(
    \E[\phi_{\bbT_k,\nu}(o)]=0,
\)
which holds precisely when \(k\ge 2d\).
In addition, \(Q_o=0\) almost surely if and only if \(Q_v=0\) almost surely
for every vertex \(v\). The claim follows.
\end{proof}

\section{Discussion}\label{sec:discussion}
Many fascinating open problems remain.
\begin{enumerate}
    \item The $d$-rigidity phase transition in $G(n,p)$ is now established, but both the subcritical and supercritical regimes should be better understood.
    Does it hold that:
    \begin{itemize}
        \item If $c<c_d$ then $G(n,c/n)$ is a.a.s. closed in the $d$-rigidity matroid? 
        \item If $c>c_d$ then the $((d+1)+d)$-core a.a.s. induces a $d$-rigid subgraph?
    \end{itemize}
    These are the strongest possible statements in the respective regimes, and numerical simulations suggest that they hold.
    \item Similarly,
    \begin{itemize}
    \item Is a random \(k\)-regular graph independent and closed in the generic \(d\)-rigidity matroid for \(k<2d\)?
    \item Is it a.a.s. \(d\)-rigid for \(k\ge 2d\)? 
\end{itemize}
In this paper we only compute its \(d\)-rigidity rank asymptotically, and
prove the corresponding result for strict rigidity of the infinite \(k\)-regular tree.
\item Another natural direction is to extend the results of this work to global
rigidity. For instance, proving that for \(c>c_d\), the \((d+1)\)-core of
\(G(n,c/n)\), or some large subgraph of it, is a.a.s. globally \(d\)-rigid
would resolve the conjecture of
\cite{BernsteinDewarGortlerNixonSitharamTheranMLT} concerning the MLT of
\(G(n,c/n)\)~\cite{THERAN_PRIVATE}.

    \item It would be interesting to generalize the methods of this paper to
other abstract \(d\)-rigidity matroids. The \(d\)-hyperconnectivity matroid
and the cofactor matroid are natural candidates, as both have concrete
matrix descriptions to which the spectral methods developed here may
transfer.
    \item The fact that generic rigidity is a combinatorial property of the graph has
played a huge role in the development of combinatorial rigidity theory. Is
there a similar notion for infinite self-adjoint frameworks?
For instance, we know by Lemma \ref{lem:Qo0} that in a $\nu$-embedding of a rooted tree
\((T,o)\), the event \(Q_o=0\) admits a zero-one law. Does the same hold for all locally
finite graphs \(G\)? Moreover, even for trees, we do not know whether the answer can depend on \(\nu\).
\end{enumerate}

\noindent \textbf{Acknowledgments}
The author thanks Paul Duncan, Alan Lew, Eran Nevo, Orit Raz and Louis Theran for many useful discussions at various stages of this work.

\bibliographystyle{amsplain}
\bibliography{rigid}

@article{ErdosRenyi1959,
  author  = {Erd{\H{o}}s, Paul and R{\'e}nyi, Alfr{\'e}d},
  title   = {On Random Graphs. {I}},
  journal = {Publicationes Mathematicae Debrecen},
  volume  = {6},
  pages   = {290--297},
  year    = {1959}
}

@article{ErdosRenyi1960,
  author  = {Erd{\H{o}}s, Paul and R{\'e}nyi, Alfr{\'e}d},
  title   = {On the Evolution of Random Graphs},
  journal = {Publications of the Mathematical Institute of the Hungarian Academy of Sciences},
  volume  = {5},
  pages   = {17--61},
  year    = {1960}
}

@book{BollobasRandomGraphs,
  author    = {Bollob{\'a}s, B{\'e}la},
  title     = {Random Graphs},
  edition   = {2},
  series    = {Cambridge Studies in Advanced Mathematics},
  volume    = {73},
  publisher = {Cambridge University Press},
  address   = {Cambridge},
  year      = {2001},
  doi       = {10.1017/CBO9780511814068}
}

@book{JansonLuczakRucinskiRandomGraphs,
  author    = {Janson, Svante and Luczak, Tomasz and Ruci{\'n}ski, Andrzej},
  title     = {Random Graphs},
  series    = {Wiley Series in Discrete Mathematics and Optimization},
  publisher = {Wiley},
  address   = {New York},
  year      = {2000},
  doi       = {10.1002/9781118032718}
}

@book{FriezeBook,
  author    = {Frieze, Alan and Karo{\'n}ski, Micha{\l}},
  title     = {Introduction to Random Graphs},
  publisher = {Cambridge University Press},
  address   = {Cambridge},
  year      = {2015},
  doi       = {10.1017/CBO9781316339831}
}

@article{Laman1970,
  author  = {Laman, Gerard},
  title   = {On Graphs and Rigidity of Plane Skeletal Structures},
  journal = {Journal of Engineering Mathematics},
  volume  = {4},
  number  = {4},
  pages   = {331--340},
  year    = {1970},
  doi     = {10.1007/BF01534980}
}

@article{AsimowRoth1978,
  author  = {Asimow, Leonard and Roth, Ben},
  title   = {The Rigidity of Graphs},
  journal = {Transactions of the American Mathematical Society},
  volume  = {245},
  pages   = {279--289},
  year    = {1978},
  doi     = {10.1090/S0002-9947-1978-0511410-9}
}

@article{AsimowRoth1979,
  author  = {Asimow, Leonard and Roth, Ben},
  title   = {The Rigidity of Graphs. {II}},
  journal = {Journal of Mathematical Analysis and Applications},
  volume  = {68},
  number  = {1},
  pages   = {171--190},
  year    = {1979},
  doi     = {10.1016/0022-247X(79)90108-2}
}

@book{GraverServatiusServatius1993,
  author    = {Graver, Jack and Servatius, Brigitte and Servatius, Herman},
  title     = {Combinatorial Rigidity},
  series    = {Graduate Studies in Mathematics},
  volume    = {2},
  publisher = {American Mathematical Society},
  address   = {Providence, RI},
  year      = {1993},
  doi       = {10.1090/gsm/002}
}

@article{BenjaminiSchramm2001,
  author  = {Benjamini, Itai and Schramm, Oded},
  title   = {Recurrence of Distributional Limits of Finite Planar Graphs},
  journal = {Electronic Journal of Probability},
  volume  = {6},
  pages   = {1--13},
  year    = {2001},
  doi     = {10.1214/EJP.v6-96}
}

@incollection{AldousSteeleObjectiveMethod,
  author    = {Aldous, David and Steele, J. Michael},
  title     = {The Objective Method: Probabilistic Combinatorial Optimization and Local Weak Convergence},
  booktitle = {Probability on Discrete Structures},
  editor    = {Kesten, Harry},
  series    = {Encyclopaedia of Mathematical Sciences},
  volume    = {110},
  pages     = {1--72},
  publisher = {Springer},
  address   = {Berlin},
  year      = {2004},
  doi       = {10.1007/978-3-662-09444-0_1}
}

@article{PittelSpencerWormaldCore,
  author  = {Pittel, Boris and Spencer, Joel and Wormald, Nicholas},
  title   = {Sudden Emergence of a Giant {$k$}-Core in a Random Graph},
  journal = {Journal of Combinatorial Theory, Series B},
  volume  = {67},
  number  = {1},
  pages   = {111--151},
  year    = {1996},
  doi     = {10.1006/jctb.1996.0036}
}

@article{JansonSimplekcore,
  author  = {Janson, Svante and Luczak, Malwina J.},
  title   = {A Simple Solution to the {$k$}-Core Problem},
  journal = {Random Structures \& Algorithms},
  volume  = {30},
  number  = {1--2},
  pages   = {50--62},
  year    = {2007},
  doi     = {10.1002/rsa.20147}
}

@article{LuczakCoreSize,
  author  = {Luczak, Tomasz},
  title   = {Size and Connectivity of the {$k$}-Core of a Random Graph},
  journal = {Discrete Mathematics},
  volume  = {91},
  number  = {1},
  pages   = {61--68},
  year    = {1991},
  doi     = {10.1016/0012-365X(91)90162-U}
}

@inproceedings{CainSandersWormald,
  author    = {Cain, J. A. and Sanders, P. and Wormald, N. C.},
  title     = {The Random Graph Threshold for {$k$}-Orientability and a Fast Algorithm for Optimal Multiple-Choice Allocation},
  booktitle = {Proceedings of the Eighteenth Annual ACM-SIAM Symposium on Discrete Algorithms},
  pages     = {469--476},
  publisher = {SIAM},
  year      = {2007}
}

@inproceedings{FernholzRamachandran,
  author    = {Fernholz, Daniel and Ramachandran, Vijaya},
  title     = {The {$k$}-Orientability Thresholds for {$G_{n,p}$}},
  booktitle = {Proceedings of the Eighteenth Annual ACM-SIAM Symposium on Discrete Algorithms},
  pages     = {459--468},
  publisher = {SIAM},
  year      = {2007}
}

@article{BernsteinDewarGortlerNixonSitharamTheranMLT,
  author  = {Bernstein, Daniel Irving and Dewar, Sean and Gortler, Steven J. and Nixon, Anthony and Sitharam, Meera and Theran, Louis},
  title   = {Maximum likelihood thresholds via graph rigidity},
  journal = {The Annals of Applied Probability},
  volume  = {34},
  number  = {3},
  pages   = {3288--3319},
  year    = {2024},
  doi     = {10.1214/23-AAP2039}
}

@article{Uhler2012,
  author  = {Uhler, Caroline},
  title   = {Geometry of Maximum Likelihood Estimation in Gaussian Graphical Models},
  journal = {The Annals of Statistics},
  volume  = {40},
  number  = {1},
  pages   = {238--261},
  year    = {2012},
  doi     = {10.1214/11-AOS957}
}

@article{GrossSullivant2018,
  author  = {Gross, Elizabeth and Sullivant, Seth},
  title   = {The Maximum Likelihood Threshold of a Graph},
  journal = {Bernoulli},
  volume  = {24},
  number  = {1},
  pages   = {386--407},
  year    = {2018},
  doi     = {10.3150/16-BEJ881}
}

@article{BordenaveLelarge2010,
  author  = {Bordenave, Charles and Lelarge, Marc},
  title   = {Resolvent of Large Random Graphs},
  journal = {Random Structures \& Algorithms},
  volume  = {37},
  number  = {3},
  pages   = {332--352},
  year    = {2010},
  doi     = {10.1002/rsa.20313}
}

@article{BordenaveLelargeSalezDiluted,
  author  = {Bordenave, Charles and Lelarge, Marc and Salez, Justin},
  title   = {The Rank of Diluted Random Graphs},
  journal = {The Annals of Probability},
  volume  = {39},
  number  = {3},
  pages   = {1097--1121},
  year    = {2011},
  doi     = {10.1214/10-AOP567}
}

@article{LinialPeledHomology,
  author  = {Linial, Nathan and Peled, Yuval},
  title   = {On the Phase Transition in Random Simplicial Complexes},
  journal = {Annals of Mathematics},
  volume  = {184},
  number  = {3},
  pages   = {745--773},
  year    = {2016},
  doi     = {10.4007/annals.2016.184.3.3}
}

@unpublished{AbertThomViragSpectralMeasure,
  author  = {Ab{\'e}rt, Mikl{\'o}s and Thom, Andreas and Vir{\'a}g, B{\'a}lint},
  title  = {Benjamini--Schramm convergence and pointwise convergence of the spectral measure},
  year   = {2013},
  note   = {Preprint}
}

@book{ReedSimonI,
  author    = {Reed, Michael and Simon, Barry},
  title     = {Methods of Modern Mathematical Physics. {I}: Functional Analysis},
  publisher = {Academic Press},
  address   = {New York},
  edition   = {Revised and enlarged},
  year      = {1980}
}

@article{KrivelevichLewMichaeliRigidity,
  author  = {Krivelevich, Michael and Lew, Alan and Michaeli, Peleg},
  title   = {Minimum Degree Conditions for Graph Rigidity},
  journal = {Bulletin of the London Mathematical Society},
  volume  = {58},
  number  = {1},
  pages   = {e70279},
  year    = {2026},
  doi     = {10.1112/blms.70279}
}

@article{JSS-planeThreshold,
  author  = {Jackson, Bill and Servatius, Brigitte and Servatius, Herman},
  title   = {The 2-dimensional rigidity of certain families of graphs},
  journal = {Journal of Graph Theory},
  volume  = {54},
  number  = {2},
  pages   = {154--166},
  year    = {2007},
  doi     = {10.1002/jgt.20196}
}

@article{LewNevoPeledRazSharpThreshold,
  author  = {Lew, Alan and Nevo, Eran and Peled, Yuval and Raz, Orit E.},
  title   = {Sharp threshold for rigidity of random graphs},
  journal = {Bulletin of the London Mathematical Society},
  volume  = {55},
  number  = {1},
  pages   = {490--501},
  year    = {2023},
  doi     = {10.1112/blms.12740}
}

@misc{KiralyTheranCoherence,
  author        = {Kir{\'a}ly, Franz J. and Theran, Louis},
  title         = {Coherence and sufficient sampling densities for reconstruction in compressed sensing},
  eprint        = {1302.2767},
  archivePrefix = {arXiv},
  primaryClass  = {cs.LG},
  year          = {2013},
  doi           = {10.48550/arXiv.1302.2767},
  note          = {arXiv:1302.2767}
}

@article{JordanTanigawaRigidityRandomSubgraphs,
  author  = {Jord{\'a}n, Tibor and Tanigawa, Shin-ichi},
  title   = {Rigidity of random subgraphs and eigenvalues of stiffness matrices},
  journal = {SIAM Journal on Discrete Mathematics},
  volume  = {36},
  number  = {3},
  pages   = {2367--2392},
  year    = {2022},
  doi     = {10.1137/20M1349849}
}

@inproceedings{TheranRigidComponentsRandomGraphs,
  author    = {Theran, Louis},
  title     = {Rigid components of random graphs},
  booktitle = {Proceedings of the 21st Canadian Conference on Computational Geometry},
  pages     = {63--66},
  year      = {2009}
}

@inproceedings{KasiviswanathanMooreTheranRigidityTransition,
  author    = {Kasiviswanathan, Shiva Prasad and Moore, Cristopher and Theran, Louis},
  title     = {The rigidity transition in random graphs},
  booktitle = {Proceedings of the Twenty-Second Annual ACM-SIAM Symposium on Discrete Algorithms},
  pages     = {1237--1252},
  publisher = {SIAM},
  year      = {2011},
  doi       = {10.1137/1.9781611973082.94}
}

@article{BarreLelargeMitscheSliders,
  author  = {Barr{\'e}, Julien and Lelarge, Marc and Mitsche, Dieter},
  title   = {On rigidity, orientability and cores of random graphs with sliders},
  journal = {Random Structures \& Algorithms},
  volume  = {52},
  number  = {3},
  pages   = {419--453},
  year    = {2018},
  doi     = {10.1002/rsa.20749}
}

@article{KrivelevichLewMichaeliRigidPartitions,
  author  = {Krivelevich, Michael and Lew, Alan and Michaeli, Peleg},
  title   = {Rigid partitions: From high connectivity to random graphs},
  journal = {Journal of Combinatorial Theory, Series B},
  volume  = {175},
  pages   = {126--170},
  year    = {2025},
  doi     = {10.1016/j.jctb.2025.07.001}
}

@misc{KrivelevichCombinatorialSufficientConditions,
  author        = {Krivelevich, Michael and Lew, Alan and Michaeli, Peleg},
  title         = {Combinatorial Sufficient Conditions for Graph Rigidity and Applications to Random Graphs},
  year          = {2026},
  eprint        = {2602.23713},
  archivePrefix = {arXiv},
  primaryClass  = {math.CO},
  doi           = {10.48550/arXiv.2602.23713},
  note          = {arXiv:2602.23713}
}

@misc{PeledPelegHighDimensional,
  author        = {Peled, Yuval and Peleg, Niv},
  title         = {On the Rigidity of Random Graphs in high-dimensional spaces},
  eprint        = {2412.13127},
  archivePrefix = {arXiv},
  primaryClass  = {math.CO},
  year          = {2024},
  doi           = {10.48550/arXiv.2412.13127},
  note          = {arXiv:2412.13127}
}

@article{GiraoIllingworthMichelPowierskiScottReconstructing,
  author  = {Gir{\~a}o, Ant{\'o}nio and Illingworth, Freddie and Michel, Lukas and Powierski, Emil and Scott, Alex},
  title   = {Reconstructing a point set from a random subset of its pairwise distances},
  journal = {SIAM Journal on Discrete Mathematics},
  volume  = {38},
  number  = {4},
  pages   = {2709--2720},
  year    = {2024},
  doi     = {10.1137/23M1586860}
}

@article{BenjaminiTzalikDetermining,
  title={Determining a Points Configuration from a Subset of the Pairwise Distances},
  author={Benjamini, Itai and Tzalik, Elad},
  journal={arXiv preprint arXiv:2208.13855},
  year={2022}
}

@incollection{MontgomeryNenadovPortierSzaboGlobalRigidityR,
  author    = {Montgomery, Richard and Nenadov, Rajko and Szab{\'o}, Tibor},
  title     = {Global Rigidity of Random Graphs in {$\mathbb{R}$}},
  booktitle = {2023 MATRIX Annals},
  series    = {MATRIX Book Series},
  volume    = {6},
  pages     = {717--724},
  publisher = {Springer},
  year      = {2025},
  doi       = {10.1007/978-3-031-76738-8_47}
}

@misc{ClinchHaslegraveHuynhNixonNAC,
  author        = {Clinch, Katie and Haslegrave, John and Huynh, Tony and Nixon, Anthony},
  title         = {Sharp thresholds for {NAC}-colourings and stable cuts in random graphs},
  eprint        = {2510.05838},
  archivePrefix = {arXiv},
  primaryClass  = {math.CO},
  year          = {2025},
  doi           = {10.48550/arXiv.2510.05838},
  note          = {arXiv:2510.05838}
}

@misc{JordanLiuVillanyiDegreeSum,
  author        = {Jord{\'a}n, Tibor and Liu, Xuemei and Vill{\'a}nyi, Soma},
  title         = {Degree Sum Conditions for Graph Rigidity},
  eprint        = {2510.25689},
  archivePrefix = {arXiv},
  primaryClass  = {math.CO},
  year          = {2025},
  doi           = {10.48550/arXiv.2510.25689},
  note          = {arXiv:2510.25689}
}

@article{KitsonPowerRigidityInfiniteGraphs,
  author  = {Kitson, Derek and Power, Stephen C.},
  title   = {The rigidity of infinite graphs},
  journal = {Discrete \& Computational Geometry},
  volume  = {60},
  number  = {3},
  pages   = {531--557},
  year    = {2018},
  doi     = {10.1007/s00454-018-9993-0}
}

@article{OwenPowerInfiniteFrameworksOperatorTheory,
  author  = {Owen, John C. and Power, Stephen C.},
  title   = {Infinite bar-joint frameworks, crystals and operator theory},
  journal = {New York Journal of Mathematics},
  volume  = {17},
  pages   = {445--490},
  year    = {2011}
}

@misc{THERAN_PRIVATE,
  author = {Theran, Louis},
  title  = {Private communication},
  year   = {2026},
  note   = {Private communication}
}

@article{Thorpe1985,
  author  = {Thorpe, M. F.},
  title   = {Rigidity percolation in glassy structures},
  journal = {Journal of Non-Crystalline Solids},
  volume  = {76},
  number  = {1},
  pages   = {109--116},
  year    = {1985},
  doi     = {10.1016/0022-3093(85)90056-0}
}

@article{Phillips1979,
  author  = {Phillips, J. C.},
  title   = {Topology of covalent non-crystalline solids {I}: Short-range order in chalcogenide alloys},
  journal = {Journal of Non-Crystalline Solids},
  volume  = {34},
  number  = {2},
  pages   = {153--181},
  year    = {1979},
  doi     = {10.1016/0022-3093(79)90033-4}
}

@article{Thorpe1983,
  author  = {Thorpe, M. F.},
  title   = {Continuous deformations in random networks},
  journal = {Journal of Non-Crystalline Solids},
  volume  = {57},
  number  = {3},
  pages   = {355--370},
  year    = {1983},
  doi     = {10.1016/0022-3093(83)90424-6}
}

@article{JacobsThorpe1995,
  author  = {Jacobs, D. J. and Thorpe, M. F.},
  title   = {Generic rigidity percolation: The pebble game},
  journal = {Physical Review Letters},
  volume  = {75},
  number  = {22},
  pages   = {4051--4054},
  year    = {1995},
  doi     = {10.1103/PhysRevLett.75.4051}
}

\end{document}